
\documentclass[12pt,reqno]{amsart}
\usepackage{latexsym,amsmath,amsfonts,amscd,amssymb}
\usepackage{graphics}

\usepackage{float}
\usepackage{tikz}
\usepackage{dsfont}
\usetikzlibrary{matrix}

\setlength{\oddsidemargin}{15pt} \setlength{\evensidemargin}{15pt}
\setlength{\textwidth}{420pt} \setlength{\textheight}{630pt}
\setlength{\topmargin}{0pt}

\setlength{\parskip}{.15cm} \setlength{\baselineskip}{.5cm}
\usepackage{tikz-cd}

\newcommand{\cota}{\tau}

\newcommand{\la}{\langle}
\newcommand{\ra}{\rangle}
\newcommand{\x}{\times}

\newcommand{\bd}{\partial}
\newcommand{\cA}{{\mathcal{A}}}
\newcommand{\cC}{{\mathcal{C}}}
\newcommand{\cD}{{\mathcal{D}}}

\newcommand{\cP}{{\mathcal{P}}}
\newcommand{\ZZ}{\mathbb{Z}}
\newcommand{\CC}{\mathbb{C}}
\newcommand{\CP}{\mathbb{C}P}

\newcommand{\RR}{\mathbb{R}}

\newcommand{\QQ}{\mathbb{Q}}

\DeclareMathOperator{\lcm}{lcm}

\DeclareMathOperator{\Id}{Id}

\newcommand{\s}{\sigma}

\newcommand{\cO}{\mathcal O}

\newcommand{\orb}{\scriptsize\mathrm{orb}}

\newtheorem{theorem}{Theorem}
\newtheorem{proposition}[theorem]{Proposition}
\newtheorem{lemma}[theorem]{Lemma}
\newtheorem{definition}[theorem]{Definition}

\newtheorem{corollary}[theorem]{Corollary}
\newtheorem{remark}[theorem]{Remark}

\newtheorem{conditions}[theorem]{Conditions}

\title[A Smale-Barden manifold admitting K-contact but not Sasakian]{A Smale-Barden manifold admitting K-contact but not Sasakian structure}

\author[V. Mu\~{n}oz]{Vicente Mu\~{n}oz}
\address{Instituto de Matem\'atica Interdisciplinar and
Departamento de \'Algebra, Geometr\'{\i}a y Topolog\'{\i}a, Universidad Complutense de Madrid, Plaza de las Ciencias, 3, 28040-Madrid, Spain}
\email{vicente.munoz@ucm.es}

\subjclass[2010]{57R18, 53C25, 53D35, 57R17}

\keywords{Sasakian, K-contact, Smale-Barden manifold}

\thanks{Partially supported by Project MINECO (Spain) PID2020-118452GB-I00}

\begin{document}

\begin{abstract}
We give the first example of a simply connected compact $5$-manifold (Smale-Barden manifold) which
admits a K-contact structure but does not admit any Sasakian structure, settling a long standing
question of Boyer and Galicki.
\end{abstract}

\maketitle

\section{Introduction}\label{sec:intro}

In geometry, a central question is to determine when a given manifold admits a specific geometric structure. 
Complex geometry provides numerous examples of compact manifolds with rich topology, and there is a number
of topological properties that are satisfied by K\"ahler manifolds \cite{ABCKT, DGMS}.
If we forget about the integrability of the complex structure, then we are dealing with symplectic manifolds. There has
been enormous interest in the construction of (compact) symplectic manifolds that do not admit K\"ahler structures,
and in determining its topological properties \cite{OT}. The fundamental group is one of the more direct invariants
that constrain the topology of K\"ahler manifolds \cite{DGMS}, whereas any finitely presented group can be the 
fundamental group of a compact symplectic manifold \cite{Gompf}. For this reason, the problem becomes more
relevant if we ask for simply connected compact manifolds. On the other hand, the difficulties increase as we
look for manifolds of the lowest possible dimension. For instance, the lowest dimension for a 
compact simply connected manifold admitting a symplectic but not a K\"ahler structure and having \emph{non-formal}
rational homotopy type is $8$. Such an example was first provided by Fern\'andez and the author 
in \cite{FM-annals}. Also, a compact simply connected
manifold admitting both a symplectic and a complex structure but not a K\"ahler structure can only happen in dimensions
higher than $6$. The first example of such instance in the lowest dimension $6$ is given by Bazzoni, Fern\'andez
and the author in \cite{BFM-JSG}.

A symplectic manifold always admits an almost-K\"ahler structure (which is a metric structure), so the topological question
above can be rephrased as to finding manifolds which admit almost-K\"ahler but no K\"ahler structures.
In odd dimensions, the analogues of K\"ahler and almost-K\"ahler manifolds are Sasakian and K-contact manifolds, respectively
(and the analogue of symplectic manifold is contact manifold). These are metric structures which are endowed with 
a one-dimensional foliation and a transversal structure which is K\"ahler or almost-K\"ahler, respectively (Section \ref{sec:k-contact} for
precise definitions). 
Sasakian geometry has become an important and active subject since the treatise of Boyer and Galicki \cite{BG}, 
and there is much interest on constructing K-contact manifolds which do not admit Sasakian structures.
As mentioned in \cite[Charpter 10]{BG}, now there is a gap between contact and K-contact, and the problem
of finding a manifold admitting a contact but not Sasakian structure is easily solved. However, finding manifolds which admit
a K-contact but not a Sasakian structure is harder.

The parity of $b_1$ was used to produce the first examples of K-contact manifolds with no Sasakian structure
\cite[example 7.4.16]{BG}. More refined tools are needed in the case of even Betti numbers. The 
cohomology algebra of a Sasakian manifold satisfies a hard Lefschetz property \cite{CNY}, and using
it examples of K-contact non-Sasakian manifolds are produced in \cite{CNMY} in dimensions $5$ and $7$. These
examples are nilmanifolds with even Betti numbers, so in particular they are not simply connected.
The fundamental group can also be used to construct K-contact non-Sasakian manifolds \cite{Chen}. 
Also it has been used to provide an example of a solvmanifold of dimension $5$ which satisfies the hard Lefschetz 
property and which is K-contact and not Sasakian \cite{CNMY2}.

When one moves to the case of simply connected manifolds, K-contact non-Sasakian 
examples of any dimension $\geq 9$ were constructed in \cite{HT} using the evenness of the third Betti
number of a compact Sasakian manifold. Alternatively, using the hard Lefschetz property
for Sasakian manifolds there are examples \cite{Lin} of simply connected K-contact non-Sasakian
manifolds of any dimension $\geq 9$.
In \cite{BFMT} the rational homotopy type is used to construct examples of simply connected 
K-contact non-Sasakian manifolds in dimensions $\geq 17$. In dimension $7$ there are 
examples in \cite{MT} of simply connected K-contact non-Sasakian manifolds. 
However, Massey products are not suitable for the analysis of lower dimensional manifolds.
The problem of the existence of simply connected K-contact non-Sasakian compact manifolds 
is still open in dimension $5$, despite numerous attempts. 

\medskip
{\centerline{
\begin{minipage}{14cm}
\noindent Open Problem 10.2.1 in \cite{BG}:
{\it Do there exist Smale-Barden manifolds which carry K-contact but do not carry Sasakian structures?}
\end{minipage}}
} \medskip

A simply connected compact $5$-manifold is called a {\it Smale-Barden manifold}. 
These manifolds are classified \cite{B,S} by $H_2(M,\mathbb{Z})$ and the second Stiefel-Whitney class
(see Section \ref{sec:SB}). This makes sensible to pose classification problems of manifolds admiting diverse geometric structures
in the class of Smale-Barden manifolds.

A Sasakian manifold $M$ always admits a \emph{quasi-regular} Sasakian structure. This
gives $M$ the structure of a Seifert bundle over a cyclic K\"ahler orbifold $X$ (see Section \ref{subsec:orbifold}). 
In the case of a $5$-manifold, $X$ is a singular
complex surface with cyclic quotient singularities. The Sasakian structure is \emph{semi-regular} if the isotropy locus is only formed
by codimension $2$ submanifolds, that is if $X$ is a smooth complex surface and the isotropy consists of smooth complex
curves (maybe intersecting). A similar statement holds for K-contact manifolds, where the base is now an almost-K\"ahler orbifold
(that is, symplectic with a compatible almost complex structure, which always exists) with cyclic singularities, and the isotropy locus
is formed by symplectic surfaces.

In \cite{K} Koll\'ar determines the topology of simply connected $5$-manifolds which are Seifert bundles over semi-regular $4$-orbifolds.
The torsion in $H_2(M,\ZZ)$ is determined by the genera and isotropy coefficients of the isotropy surfaces.
He uses this to produce simply connected $5$-manifolds which are Seifert bundles (that is, they admit a fixed point free circle action)
but which do not admit a Sasakian structure. If the structure is semi-regular, the isotropy surfaces must satisfy the adjunction equality,
so an example violating it will produce such example. In general, a K\"ahler orbifold can have isolated singularities which cause
serious difficulties, since the classification of singular complex surfaces is far more complicated than that of smooth complex surfaces.
Koll\'ar uses the case $b_2(X)=1$, where there is a bound on the number of singular points, and taking enough curves for the 
isotropy locus ensures that some of them satisfy the adjunction equality.

 To produce K-contact Smale-Barden manifolds, one needs to construct symplectic $4$-manifolds (or $4$-orbifolds with cyclic quotient
 singularities) with symplectic surfaces of given genus inside. If the isotropy coefficients are not coprime, these surfaces are forced to be
 disjoint (and linearly independent in homology). Therefore there is a bound on the number $k$ of surfaces in the isotropy locus,
 $k\leq b_2(X)$. The genus of the isotropy surfaces, the isotropy coefficients, and whether they are disjoint, are translated to the 
 homology group
 $H_2(M,\ZZ)$ of the $5$-manifold $M$. This is used in \cite{MRT} to produce a homology Smale-Barden manifold (that is, a $5$-manifold $M$ with 
$H_1(M,\ZZ)=0$ instead of simply connected) which admits a semi-regular K-contact but not a semi-regular Sasakian structure.
For this, we construct a simply connected symplectic $4$-manifold $X$ with $k$ \emph{disjoint} symplectic surfaces of positive genus
where $k=b_2(X)$, and linearly independent in homology. To prove that this is not semi-regular Sasakian, we have to check that 
there is no complex surface $Y$ with $b_1(Y)=0$, and $k$ disjoint complex curves of positive genus where $k=b_2(Y)>1$, 
and linearly independent in homology, at least for the case where the genera match our symplectic example. 
The existence of so many disjoint complex curves of positive genus and generating the rational homology  
is certainly a rare phenomenon and we conjecture that it does never happen. Unfortunately, as the example in \cite[Section 3]{CMST} 
shows, this can happen for singular complex manifolds with cyclic singularities. For this reason, we do not know whether the example
in \cite{MRT} can admit a quasi-regular Sasakian structure. 

Later, in \cite{CMRV} we extend the ideas of \cite{MRT} to produce the first example of \emph{simply connected} $5$-manifold which
admits a semi-regular K-contact structure but not a semi-regular Sasakian structure. Again we have not been able to remove the
semi-regularity assumption. The purpose of this paper is to completely settle the question in \cite[Open Problem 10.2.1]{BG}.

\begin{theorem} \label{thm:main}
There exists a Smale-Barden manifold $M$ which admits a K-contact structure but does not admit a Sasakian structure.
\end{theorem}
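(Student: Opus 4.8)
The plan is to construct an explicit Smale-Barden manifold $M$ realized as a Seifert bundle over a symplectic $4$-orbifold, prove it is K-contact by the orbifold Boothby-Wang construction, and then obstruct the existence of any Sasakian structure — crucially including quasi-regular ones with isolated (non-codimension-$2$) singularities, which is exactly what the earlier works \cite{MRT,CMRV} could not rule out. First I would build a simply connected symplectic $4$-manifold $X$ (or cyclic $4$-orbifold) carrying a configuration of disjoint symplectic surfaces $D_1,\dots,D_k$ of positive genus that are linearly independent in $H_2(X,\QQ)$ and generate enough of the homology; assigning isotropy coefficients $m_i$ to the $D_i$ and choosing an integral symplectic (orbifold) class yields a Seifert bundle $\pi\colon M\to X$. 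One then computes $H_1(M,\ZZ)$ and $H_2(M,\ZZ)$ via Koll\'ar's formulas \cite{K}, arranging $H_1=0$ so that $M$ is a genuine Smale-Barden manifold, and identifies the torsion in $H_2(M,\ZZ)$ in terms of the genera and the $m_i$. The K-contact structure is then automatic from the symplectic/almost-K\"ahler geometry of the base together with the integrality of the symplectic class.

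The obstruction half is the heart of the argument. Suppose for contradiction that $M$ admits \emph{some} Sasakian structure; then it admits a quasi-regular one, exhibiting $M$ as a Seifert bundle over a cyclic K\"ahler orbifold $Y$ with $b_1(Y)=0$, where the isotropy divisors are complex curves whose genera and isotropy data are pinned down by the computed $H_2(M,\ZZ)$. The plan is to show no such $Y$ can exist. The point where one must go beyond the semi-regular case is that $Y$ may have isolated quotient singularities, and — as the example in \cite[Section 3]{CMST} warns — many disjoint positive-genus curves spanning the rational homology \emph{can} occur on singular surfaces. So I would choose the numerical invariants of $M$ so that the forced configuration on $Y$ is impossible even allowing isolated singularities: the idea is to combine the adjunction inequality for the isotropy curves on $Y$ (in its orbifold/singular form) with constraints on $b_2(Y)$ and on the self-intersections dictated by the torsion of $H_2(M,\ZZ)$, together with a bound on the number and local type of the quotient singularities (via the orbifold Noether/Bogomolov-Miyaoka-Yau-type inequalities or a direct Euler-characteristic count) to derive a numerical contradiction.

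I would carry the steps out in this order: (1) specify $X$ and the surface configuration, verifying the symplectic existence of the $D_i$ and their homological independence; (2) compute $H_1(M,\ZZ)=0$ and the full $H_2(M,\ZZ)$, fixing the torsion and $b_2(M)$; (3) produce the K-contact structure via the orbifold Boothby-Wang correspondence; (4) translate ``$M$ Sasakian'' into ``$M$ is a Seifert bundle over a cyclic K\"ahler orbifold $Y$ with prescribed isotropy complex curves,'' reading off the genera, coefficients, and homological relations from Step (2); (5) rule out $Y$ by an adjunction-plus-characteristic-number argument that remains valid in the presence of isolated cyclic singularities.

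The main obstacle I expect is Step (5): in the smooth (semi-regular) setting the adjunction equality alone suffices, but isolated singularities relax adjunction into an inequality and admit exotic curve configurations, precisely the loophole \cite{CMST} exploits. The crux will therefore be to calibrate the genera and isotropy coefficients of the $D_i$ so tightly that every singular K\"ahler orbifold $Y$ compatible with the topology of $M$ violates an orbifold characteristic-number inequality, thereby closing the gap left open in \cite{MRT} and \cite{CMRV} and settling \cite[Open Problem 10.2.1]{BG} in full.
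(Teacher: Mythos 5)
Your Steps (1)--(4) match the paper's architecture (Seifert bundle over a symplectic cyclic orbifold, Koll\'ar-type homology computations, orbifold Boothby--Wang), but Step (5) --- which you correctly identify as the heart --- has a genuine gap: as literally described, ``calibrate one configuration of genera and coefficients so that every compatible K\"ahler orbifold $Y$ violates an orbifold characteristic-number inequality'' is precisely the strategy that the example in \cite[Section 3]{CMST} defeats, and it is why the earlier attempts \cite{MRT, CMRV} could not remove the semi-regularity hypothesis. Adjunction (even in its orbifold form) plus BMY-type inequalities applied to a \emph{single} collection of disjoint spanning curves does not yield a contradiction, because singular K\"ahler orbifolds genuinely admit such configurations.

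The missing idea, and what the paper actually does, is quantitative rather than a one-shot numerical obstruction. On the symplectic side the base $\bar X$ is built with $b_2=b^+=3$, and because $b^+>1$ one can take symplectic \emph{multiples} $T_n\equiv nT_1$ of the positive-self-intersection surface (Proposition \ref{prop:theTn}), producing for every pair $(n,m)$, $0\le n,m\le N$, a family of three disjoint spanning surfaces of genera $\{n^2+1,\, m^2+1,\, 10\}$; these are encoded in the torsion of $H_2(M,\ZZ)$ via distinct primes $p_{nm}$ with multiplicities $p_{nm}, p_{nm}^2, p_{nm}^3$. Any quasi-regular Sasakian structure would then force the K\"ahler base $Y$ to carry \emph{all} of these families simultaneously. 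On the K\"ahler side disjoint spanning curves force $h^{1,1}=b_2=3$, hence signature $(1,2)$, i.e.\ $b^+(Y)=1$ --- this is the asymmetry between symplectic and K\"ahler that powers the whole proof. The contradiction is then a finiteness statement: after universally bounding the number of singular points (via the log-canonical/nef argument and \cite[Theorem 10.14]{Ast}), bounding $K^2$ from both sides, and bounding self-intersections of nice curves, the paper shows (splitting into $K^2\le 0$ and $K^2>0$, the latter by a hyperbolic-area packing argument on the hyperboloid of unit classes) that $Y$ can support at most a universal number $T_8$ of such orthogonal bases of disjoint curves, which is violated once $N$ is large. Without this ``many families versus universally bounded count'' mechanism, your Step (5) cannot close the loophole you yourself flag.
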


More precisely, a manifold $M$ in Theorem \ref{thm:main} can be explicitly given as follows. There is some 
$N>0$ large enough, and distinct primes $p_{nm}$ with $p_{nm}>\max(3,n,m)$, $1\leq n,m\leq N$, so that 
$M$ is the Smale-Barden manifold characterized  by the fact $M$ is spin and its homology is
$$
  H_2(M,\ZZ)= \ZZ^2 \oplus \bigoplus_{n,m=1}^N 
 \left(\ZZ^{18n^2+2}_{p_{nm}}  \oplus \ZZ^{18m^2+2}_{p_{nm}^2} \oplus \ZZ^{20}_{p_{nm}^3}\right) .
 $$

Let us describe the philosophy behind the construction in Theorem \ref{thm:main}, although the technical details, which
will be carried out in the following sections, get quite involved. 
As we said before, for our Seifert bundles $M\to X$ we can keep track of the genera and the disjointness of the 
isotropy surfaces. With this we try to push the construction to record also the value of $b^+_2(X)$. We note that
when $Y$ is a complex surface with $k=b_2(Y)$ complex curves spanning $H_2(Y,\QQ)$, then $b^+_2(Y)=1$, whereas
the same property does not necessarily hold for symplectic $4$-manifolds. If $X$ is symplectic and $b^+_2(X)>1$, then
when we have $k=b_2(X)$ disjoint symplectic surfaces, we can take positive multiples of those with positive self-intersection.
This gives $N$ families of $k=b_2(X)$ disjoint symplectic surfaces, which can be used as isotropy locus, for any $N\gg 0$ as
large as we want.

The proof that the resulting $5$-manifold $M$ does not admit a Sasakian structure now requires to check that
there is no singular complex surface $Y$ with cyclic singularities with a large number of families, each consisting 
of $k=b_2(Y)$ disjoint complex curves.
First we need to bound the number of singular points (universally, i.e.\ independently of $Y$) as it was done in \cite{K}
for the easy case $b_2(Y)=1$. This serves to bound geometric quantities, like the Euler characteristic, $K^2$,
or the self-intersection of negative curves. For orbifolds, the intersection and self-intersection numbers and $K^2$ can
be rational (instead of just integers), so it is necessary to bound the denominators (independently of $Y$).

As the number of singular points in bounded, we have that 
most of the families of disjoint complex curves avoid the singular points. However, now the genera of 
the curves have increased (the arguments of \cite{CMRV, MRT} deal with cases of low genus
curves, so they are not helpful now). 
In the families of disjoint complex curves, it cannot happen that the curves are multiples of $k$ fixed curves (incidentally, note that this was
the way in which the symplectic example $X$ is produced), because that would imply that $b^+_2(Y)>1$, and this does not
hold for an algebraic surface. This forces to have from the initial $N$ families of $k$ disjoint complex curves 
(these are orthogonal bases of $H_2(Y,\QQ)$), many of them whose elements 
are not proportional to each other (what we call proj-equivalent bases). 
The final step is to prove the impossibility of this situation,
by writing $K^2$ with respect to each of the orthogonal basis, 
and use the bounds on the denominators of the rational numbers. We get a collection of
diophantine equalities, and choosing $N$ large enough, these become incompatible.

\subsection*{Acknowledgements} I am grateful to Javier Fern\'andez de Bobadilla, Marco Castrill\'on, Antonio Viruel and Maribel
Gonz\'alez-Vasco for encouragement. Also thanks to Matthias Sch\"utt and Alex Tralle for 
very useful conversations, and to Sergio Negrete for help with designing a program to find elliptic
surfaces for Section \ref{sec:k-cont}. Special thanks to Juan Rojo for carefully reading the manuscript and finding
a mistake in a previous version, and to \'Angel Gonz\'alez-Prieto for help with drawing the pictures. 
Finally, I am grateful to the referees for very helpful comments that have improved the
exposition, and for the kind words of praise.

\section{Basic notions}

\subsection{Smale-Barden manifolds} \label{sec:SB}

A $5$-dimensional simply connected manifold is called a {\it Smale-Barden manifold}. 
These manifolds are classified by their second homology group over $\ZZ$ and the Barden invariant \cite{B,S}. In more detail,
let $M$ be a compact smooth simply connected $5$-manifold and 
write $H_2(M,\ZZ)$ as a direct sum of cyclic groups of prime  power order
  \begin{equation}\label{eqn:H2Z}
  H_2(M,\ZZ)=\ZZ^k\oplus \left( \mathop{\oplus}\limits_{p,i}\ZZ_{p^i}^{c(p^i)}\right),
  \end{equation}
where $k=b_2(M)$. The equality (\ref{eqn:H2Z}) is actually an isomorphism as abelian groups.
We can arrange so that the second Stiefel-Whitney class map
  $$
  w_2: H_2(M,\ZZ)\rightarrow\ZZ_2
  $$
is zero on all but one summand (or zero on all if $w_2=0$). For that, take an element
on which it is not zero, and complete to a generating system with elements in the kernel.
If $w_2$ is non-zero on $\ZZ_{2^j}$, we set $i(M)=j$; if $w_2$ is non-zero on a summand
$\ZZ$, we set $i(M)=\infty$; if $w_2=0$ then we set $i(M)=0$.
The number $i(M)$ is called the Barden invariant and determines $w_2$ up to isomorphism
of abelian groups.
A Smale-Barden manifold $M$ is uniquely characterized by its homology (\ref{eqn:H2Z}) and $i(M)$.

We shall not use the following, but we include for completeness. The geometric description of 
Smale-Barden manifolds corresponding to these abelian groups is given as follows.

\begin{theorem}[{\cite[Theorem 10.2.3]{BG}}]\label{thm:sb-classification} 
Any simply connected closed $5$-manifold is diffeomorphic to one of the spaces
 $$
  M_{j;k_1,...,k_s;r}=X_j\# r M_\infty \#M_{k_1}\#\cdots\#M_{k_s} 
   $$
where the manifolds $X_{-1},X_0,X_j,X_{\infty}, M_j,M_{\infty}$ are characterized as follows: $1<k_i< \infty$, $k_1|k_2|\ldots |k_s$, and
\begin{itemize}
\item $X_{-1}=SU(3)/SO(3)$, $H_2(X_{-1},\ZZ)=\ZZ_2$, $i(X_{-1})=1$,
\item $X_0=S^5$, $H_2(X_0,\ZZ)=0$, $i(X_0)=0$,
\item $X_j$, $0<j<\infty$, $H_2(X_j,\ZZ)=\ZZ_{2^j}\oplus \ZZ_{2^j}$, $i(X_j)=j$,
\item $X_{\infty}=S^2\widetilde\times S^3$, the unique non-trivial $S^3$-bundle over $S^2$, $H_2(X_{\infty},\ZZ)=\ZZ$, $i(X_\infty)=\infty$,
\item $M_k$, $1<k<\infty$, $H_2(M_k,\ZZ)=\ZZ_k\oplus \ZZ_k$, $i(M_k)=0$,
\item $M_{\infty}=S^2\times S^3$, $H_2(M_{\infty},\ZZ)=\ZZ$, $i(M_{\infty})=0$.
\end{itemize}
\end{theorem}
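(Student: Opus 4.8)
The plan is to isolate the algebraic invariants of $M$, to show they are realised by the listed connected sums, and then to prove completeness by reducing a handle decomposition of $M$ to a normal form. First I would pin down what data $M$ carries. Poincaré duality gives $H_i(M,\ZZ)\cong H^{5-i}(M,\ZZ)$, and the universal coefficient theorem gives $H^j(M,\ZZ)\cong\Hom(H_j(M,\ZZ),\ZZ)\oplus\operatorname{Ext}(H_{j-1}(M,\ZZ),\ZZ)$. Together with $H_0=\ZZ$, $H_1=0$, $H_5=\ZZ$ these force $H_4=0$, $H_3\cong\ZZ^k$ free of rank $k=b_2(M)$, and $H_2(M,\ZZ)=\ZZ^k\oplus T$ with $T$ the torsion subgroup; thus the whole integral homology is read off from $H_2$. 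On $T$ there is a nondegenerate symmetric torsion linking form $\ell\colon T\x T\too\QQ/\ZZ$, and $w_2\in H^2(M,\ZZ_2)\cong\Hom(H_2(M,\ZZ),\ZZ_2)$ is a homomorphism which, after choosing the decomposition \eqref{eqn:H2Z} so that it is supported on a single summand $\ZZ_{2^j}$, records the Barden invariant $i(M)$. The candidate complete invariant is the triple $(k,(T,\ell),i(M))$.

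Next I would verify that each block in the list exists and has the stated $H_2$ and $w_2$, and that these invariants are additive under connected sum: homology and $w_2$ add, while the linking form is an orthogonal sum. The algebraic input here is a classification of nondegenerate symmetric linking forms on a finite abelian group refined by the $\ZZ_2$-quadratic datum coming from $w_2$: such a form splits as an orthogonal sum of the standard forms carried by the $M_k$ and $X_j$, which is precisely what forces the torsion to appear in paired cyclic summands $\ZZ_{p^i}\oplus\ZZ_{p^i}$, the single $\ZZ_2$ of the Wu manifold $\SU(3)/\SO(3)$ being the only exception. Running over all admissible triples then shows that the connected sums on the right realise every possibility.

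Completeness is the heart of the matter, and for it I would pass to handle theory. By Morse theory $M$ has a handle decomposition, and since $M$ is simply connected Smale's handle-trading techniques, available in this dimension, let one arrange a single $0$-handle, a single $5$-handle, and handles only in the middle indices $2$ and $3$. The attaching data of these handles is an integral matrix whose reduction computes $H_2$, the linking form, and $w_2$ modulo $2$; by handle slides one brings it to the normal form dictated by the algebraic classification above, exhibiting $M$ as a connected sum of blocks. Finally, two manifolds with identical invariants are shown to be $h$-cobordant, and the smooth $h$-cobordism theorem applied to the $6$-dimensional cobordism between them promotes this to a diffeomorphism.

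The hardest part will be the reduction to normal form in the non-spin case: matching the $\ZZ_2$-quadratic refinement of the linking form to the framings in the attaching data of the $2$- and $3$-handles, producing the Wu summand $\SU(3)/\SO(3)$, and distinguishing the twisted bundle $X_\infty=S^2\widetilde\x S^3$ from $M_\infty=S^2\x S^3$, all require a delicate control of framings and of the action of $w_2$ that goes well beyond Smale's torsion-free analysis.
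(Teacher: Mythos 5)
The paper offers no proof of this statement: it is imported verbatim from \cite[Theorem 10.2.3]{BG}, and ultimately rests on the classical papers of Smale \cite{S} (the case $w_2=0$) and Barden \cite{B} (the general case). Your overall architecture --- extract the invariants $H_2$, linking form, $w_2$; realize them by the listed connected sums; prove completeness by handle trading down to indices $2$ and $3$ followed by the $h$-cobordism theorem --- is indeed the classical Smale--Barden route, so there is no alternative argument in the paper to compare it with, only the citation.

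There is, however, a genuine error at the algebraic heart of your plan. The torsion linking form $\ell\colon T\times T\to \QQ/\ZZ$ on $H_2$ of a closed oriented $5$-manifold is \emph{skew-symmetric}, not symmetric: for $M^{2q+1}$ the form on the torsion of $H_q$ is $(-1)^{q+1}$-symmetric, which is symmetric for $3$-manifolds ($q=1$) but satisfies $\ell(x,y)=-\ell(y,x)$ for $5$-manifolds ($q=2$). Your claim that nondegenerate \emph{symmetric} linking forms split into orthogonal sums of the standard pieces carried by the $M_k$ and $X_j$, ``which is precisely what forces the torsion to appear in paired cyclic summands,'' is false: $\ZZ_p$ with $\ell(x,y)=xy/p$ is a nondegenerate symmetric linking form on a cyclic group (realized geometrically by lens spaces in dimension $3$), so symmetry alone would permit unpaired torsion, and your realization-plus-completeness scheme would then wrongly predict simply connected $5$-manifolds with, say, $H_2\cong\ZZ_p$. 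What actually forces $T\cong A\oplus A$, or $A\oplus A\oplus\ZZ_2$ with the exceptional $\ZZ_2$ carrying $\ell(x,x)=1/2$ and accounting for the Wu manifold $X_{-1}$, is Wall's classification of \emph{skew} linking forms on finite abelian groups. Since the entire shape of the list in the theorem hangs on this point, the proposal as written does not establish the statement; moreover, the non-spin normal-form step that you explicitly defer (matching the quadratic refinement to handle framings, producing the $X_{-1}$ and $X_j$ summands, and separating $S^2\widetilde\times S^3$ from $S^2\times S^3$) is precisely Barden's contribution and would still have to be carried out in full.
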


\subsection{Sasakian and K-contact manifolds}\label{sec:k-contact}

Let $(M,\eta)$ be a co-oriented contact manifold with a contact form
$\eta\in \Omega^1(M)$, i.e.\ $\eta\wedge (d\eta)^n>0$ everywhere, with $\dim M=2n+1$. The 
manifold $M$ is automatically oriented.
We say that $(M,\eta)$ is {\em K-contact} if there is an
endomorphism $\Phi$ of $TM$ such that:
 \begin{itemize}
\item $\Phi^2=-\Id + \xi\otimes\eta$, where $\xi$ is the Reeb
vector field of $\eta$ (that is $i_\xi \eta=1$, $i_\xi (d\eta)=0$),
\item the contact form $\eta$ is compatible with $\Phi$ in the sense that
$d\eta (\Phi X,\Phi Y)\,=\,d\eta (X,Y)$, for all vector fields $X,Y$,
\item $d\eta (\Phi X,X)>0$ for all nonzero $X\in \ker \eta$, and
\item the Reeb field $\xi$ is Killing with respect to the
Riemannian metric defined by the formula
 $g(X,Y)\,=\,d\eta (\Phi X,Y)+\eta (X)\eta(Y)$.
\end{itemize}
In other words, the endomorphism $\Phi$ defines a
complex structure on $\cD=\ker \eta$ compatible with $d\eta$, hence
$\Phi$ is orthogonal with respect to the metric
$g|_\cD$. By definition, the Reeb vector
field $\xi$ is orthogonal to $\cD$, and it is a Killing vector field.

Let $(M,\eta,\xi,\Phi,g)$ be a K-contact manifold. Consider the contact cone as the Riemannian manifold
$C(M)=(M\times\mathbb{R}_+, t^2g+dt^2)$.
One defines the almost complex structure $I$ on $C(M)$ by:
 \begin{itemize}
\item $I(X)=\Phi(X)$ on $\ker\eta$,
\item $I(\xi)=t{\partial\over\partial t},\,I(t{\partial\over\partial t})=-\xi$, for the Killing vector field $\xi$ of $\eta$.
\end{itemize}
We say that $(M,\eta,\xi,\Phi,g)$  is  {\it Sasakian} if $I$ is integrable.
Thus, by definition, any Sasakian manifold is K-contact.

The Sasakian structure can also be defined by the 
integrability of the almost contact metric structure. More precisely, an almost contact
metric structure $(\eta , \xi , \Phi , g)$ is called \emph{normal} if the Nijenhuis
tensor $N_{\Phi}$ associated to the tensor field $\Phi$, defined by
 $$
N_{\Phi} (X, Y) \,:=\, {\Phi}^2 [X, Y] + [\Phi X, \Phi Y] -  \Phi [ \Phi X, Y] - \Phi [X, \Phi Y]\, ,
 $$
satisfies the equation
$$
N_{\Phi} \,=\,-{d}\eta\otimes \xi\, .
$$
Then a {Sasakian structure} is a normal contact metric structure.

A Sasakian (compact) manifold $M$ has a $1$-dimensional foliation defined by the Reeb vector field, which gives an isometric flow,
and the transversal structure is K\"ahler. The Sasakian structure is called \emph{quasi-regular} if the leaves of the Reeb flow are
circles, in which case the leaf space $X$ is a K\"ahler cyclic orbifold and
the quotient map $\pi:M\to X$ has the structure of a Seifert bundle \cite[Theorem 7.13]{BG}. Remarkably, a manifold $M$ admitting
a Sasakian structure also has a quasi-regular one \cite{R}. So from the point of view of whether $M$ admits a Sasakian 
structure, we can assume that it is a Seifert bundle over a K\"ahler cyclic orbifold. The Sasakian structure
is \emph{regular} if $X$ is a K\"ahler manifold (no isotropy locus), and \emph{semi-regular} if the isotropy locus
has only codimension $2$ strata (maybe intersecting), or equivalently if $X$ has underlying space which is a topological manifold.

In the case of a K-contact manifold, the situation is analogous, with the difference that 
the transversal structure is almost-K\"ahler.
We define regular, quasi-regular and semi-regular K-contact structures
with the same conditions. Any K-contact manifold admits a quasi-regular K-contact structure  \cite{MT}, and
hence a K-contact manifold is a Seifert circle bundle over a symplectic cyclic orbifold. In the
case $\dim M=5$, $\dim X=4$, such orbifold has isotropy locus
which is a collection of symplectic surfaces and points. 
From the point of view of whether a manifold admits a K-contact structure, we can always
assume that the manifold is a Seifert bundle over a symplectic cyclic orbifold.

\subsection{Cyclic orbifolds}\label{subsec:orbifold}
Let $X$ be a $4$-dimensional (oriented) cyclic orbifold. For the notions about orbifolds 
the reader can consult \cite{BFMT, BG,Mu}.
Let $x\in X$ be a point. A neighbourhood of $x$ is an open subset $U=\tilde U/\ZZ_m$,
where $\tilde U \subset \CC^2$ and 
the action of $\ZZ_m=\la \varepsilon\ra$, $\varepsilon=e^{2\pi i /m}$, is given by
 \begin{equation}\label{eqn:action}
 \varepsilon\cdot (z_1,z_2)=(\varepsilon^{j_2} z_1, \varepsilon^{j_1} z_2),
 \end{equation}
where $j_1,j_2$ are defined modulo $m$, and $\gcd(j_1,j_2,m)=1$. 
We say that $m=m_x$ is the isotropy of $x$, and $\mathbf{j}_x=(m,j_1,j_2)$
are the local invariants for $x$. 

We say that $D\subset X$ is an
isotropy surface of multiplicity $m$ if $D$ is a closed $2$-dimensional suborbifold, and the regular set
$D^\circ\subset D$ is a connected smooth surface with $m_x=m$, for $x\in D^\circ$. 
The \emph{local invariants} for $D$ are those of a point
in $D^\circ$, that is $\mathbf{j}_D=(m,j)$. Locally $D=\{(z_1,0)\}$ and the action is
given by $\varepsilon=e^{2\pi i /m}$, $\varepsilon\cdot (z_1,z_2)=(z_1,\varepsilon^j z_2)$.

Now to describe the action (\ref{eqn:action}) at a point $x$, we 
set $m_1=\gcd(j_1,m)$, $m_2=\gcd(j_2,m)$. Note that $\gcd(m_1,m_2)=1$, so we can
write $m_1m_2 d=m$, for some integer $d$. 
Put $j_1=m_1 e_1$, $j_2=m_2 e_2$. Then we have that \cite[Proposition 2]{MRT}
  $$
  \CC^2/\ZZ_m= ((\CC/\ZZ_{m_2}) \x (\CC/\ZZ_{m_1}) \big)/\ZZ_d\, ,
 $$
where $\CC/\ZZ_{m_2} \times \CC/\ZZ_{m_1}$ is homeomorphic to $\CC^2$ via the map 
$(z_1,z_2)\mapsto (w_1,w_2)=(z_1^{m_2},z_2^{m_1})$. The points of 
$D_1=\{(z_1,0)\}$ and $D_2=\{(0,z_2)\}$ define two surfaces intersecting transversally, and with multiplicities
$m_1,m_2$, respectively, and the action of $\ZZ_d$ on $\CC^2$ is given by
$\varepsilon \cdot(w_1,w_2)=
(e^{2\pi ie_1/d} w_1, e^{2\pi ie_2/d} w_2)$, where $\gcd(e_1,d)=\gcd(e_2,d)=1$. Thus
the point $x$ has as link a lens space ${S}^3/\ZZ_d$, and the images of $D_1$ and $D_2$ are the points
with non-trivial isotropy, with multiplicities $m_1,m_2$, respectively.

We say that $x\in X$ is a singular point if $d>1$ and smooth if $d=1$, and we
denote $d=d_x$. Let $P\subset X$ be the (finite) collection of 
singular points. We say that two surfaces $D_1,D_2\subset X$
\emph{intersect nicely} if at every intersection point $x\in D_1\cap D_2$ there are adapted
coordinates $(z_1,z_2)$ at $x$ such that $D_1=\{(z_1,0)\}$ and $D_2=\{(0,z_2)\}$ in a model
$\CC^2/\ZZ_m$, as above. If the point $x\in X$ is smooth, then $D_1,D_2$ intersect transversally and positively.
In ths situation, the surfaces $D_i$ are said to be \emph{nice}.

A symplectic (cyclic) $4$-orbifold $(X,\omega)$ is a $4$-orbifold $X$ with an orbifold $2$-form 
$\omega\in \Omega^2_{\orb}(X)$ such that 
$d\omega=0$ and $\omega^2>0$. At every point $x\in X$, there are {orbifold Darboux charts} \cite[Proposition 11]{MR}, 
that is an orbifold
chart as above where $\omega$ has the standard form on $\CC^2$ (and hence $\ZZ_m$ acts symplectically). In this
case, the isotropy surfaces $D_i$ are symplectic surfaces (or more accurately, symplectic suborbifolds), and their
intersections are nice, which in this case means that they intersect symplectically orthogonal and positively.  

A K\"ahler (cyclic) $4$-orbifold $(X,J,\omega)$ consists of a symplectic form $\omega$ and a compatible orbifold almost
complex structure $J$, whose Nijenhius tensor $N_J=0$ vanishes. In this case, at every point $x\in X$ there are complex 
charts of the form $\CC^2/\ZZ_m$ as above. The isotropy surfaces $D_i$ are complex curves and the singular points
are (cyclic) complex singularities.

A cyclic orbifold is recovered from the singular points $P\subset X$ and the isotropy surfaces as follows.

\begin{proposition}[{\cite[Propositions 22 and 23]{Mu}}]  \label{prop:smooth->orb}
Let $X$ be an oriented cyclic $4$-orbifold whose isotropy locus is of dimension $0$ (that is, the singular set $P$). Let $D_i$ be 
embedded surfaces intersecting nicely, and 
take coefficients $m_i>1$ such that
$\gcd(m_i,m_j)=1$ if $D_i$, $D_j$ intersect. Then there is a cyclic orbifold structure on $X$, that we denote as $X'$, 
with isotropy surfaces $D_i$ of
multiplicities $m_i$, and singular points $x\in P$ of multiplicity $m_x=d_x\prod_{x\in D_i} m_i$.

Moreover, if $X$ is a K\"ahler (symplectic) cyclic orbifold and the surfaces $D_i$ are complex (symplectic), then
the resulting orbifold $X'$ is a K\"ahler (symplectic) cyclic orbifold. 
\end{proposition}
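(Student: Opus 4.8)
The plan is to build the orbifold $X'$ by prescribing a local uniformizing chart around each point of $X$, to check that these charts are mutually compatible, and finally to transport the complex resp.\ symplectic structure through the branched covers defining the charts. Away from $\bigcup_i D_i$ nothing should change, so at such points I keep the charts of $X$ itself: the trivial chart at a smooth point, and the chart $\CC^2/\ZZ_{d_x}$ of (\ref{eqn:action}) at a point $x\in P$ lying off all the surfaces.

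At a point $x$ on a single surface $D_i$ and off $P$ and off all intersections, I choose coordinates with $D_i=\{w_2=0\}$ and install the chart $\pi\colon \CC^2\to \CC^2$, $(w_1,w_2)\mapsto(w_1,w_2^{m_i})$, with $\ZZ_{m_i}$ acting by $w_2\mapsto e^{2\pi i/m_i}w_2$; the generic stabilizer along $D_i$ then has order $m_i$, as wanted. At a nice intersection $x\in D_i\cap D_j$ with $x\notin P$, niceness gives coordinates with $D_i=\{w_2=0\}$ and $D_j=\{w_1=0\}$, and I install $(w_1,w_2)\mapsto(w_1^{m_j},w_2^{m_i})$ with group $\ZZ_{m_j}\x\ZZ_{m_i}$; since $\gcd(m_i,m_j)=1$ this group is cyclic of order $m_im_j$ and acts as in (\ref{eqn:action}), producing multiplicities $m_i,m_j$ along the two axes. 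Finally, at a singular point $x\in P$ the chart of $X$ is $\CC^2/\ZZ_{d_x}$; by niceness at most two surfaces pass through $x$, and I combine the pre-existing $\ZZ_{d_x}$ with the normal branchings by reading the decomposition $\CC^2/\ZZ_m=\big((\CC/\ZZ_{m_2})\x(\CC/\ZZ_{m_1})\big)/\ZZ_d$ backwards, with $m_1,m_2$ the surface multiplicities through $x$ and $d=d_x$. This yields a single cyclic chart $\CC^2/\ZZ_{m_x}$ with $m_x=d_x\prod_{x\in D_i}m_i$.

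Next I would verify compatibility on overlaps. Off the surfaces the charts coincide with those of $X$, so there is nothing to check. Along each $D_i$ the branched covers $w_2\mapsto w_2^{m_i}$ glue because the branch locus is the embedded surface $D_i$, and the transition maps of $X$ restrict to coordinate changes fixing $\{w_2=0\}$, hence lift (after a choice of $m_i$-th roots) to orbifold transition maps of the covers. The only genuinely delicate matching is at a singular point $x\in P$: I must check that the combined model $\CC^2/\ZZ_{m_x}$ restricts, on a punctured neighbourhood and along each surface direction, to the charts installed along the $D_i$ leaving $x$. This is exactly what the decomposition formula guarantees, as it exhibits $\CC^2/\ZZ_{m_x}$ as the $\ZZ_{d_x}$-quotient of a product of normal cyclic quotients, whose surface strata carry the multiplicities $m_1,m_2$ and whose link is $S^3/\ZZ_{d_x}$. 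I expect this gluing at the singular points, where the old isotropy $\ZZ_{d_x}$ interacts with the new codimension-two isotropy, to be the main technical obstacle.

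For the last statement, in the K\"ahler case the surfaces $D_i$ are complex, so the coordinates above may be taken holomorphic with the axes complex; then each branching $w\mapsto w^m$ is holomorphic and each group acts by complex automorphisms, so $J$ lifts to an integrable orbifold complex structure and the singular points remain cyclic complex quotient singularities. In the symplectic case I would first invoke the orbifold Darboux charts \cite[Proposition 11]{MR} together with the symplectic neighbourhood theorem to make $\o$ standard near each $D_i$, each intersection, and each point of $P$. The naive pullback of $\o$ to a uniformizing cover degenerates along the branch locus, since in the normal disc $r\,dr\wedge d\h$ pulls back to $m^2\rho^{2m-2}\,\rho\,d\rho\wedge d\f$, which vanishes at $\rho=0$ for $m>1$; so I would replace the fibre area form by an interpolation equal to the standard form $\rho\,d\rho\wedge d\f$ near the core and to $\pi^*(r\,dr\wedge d\h)$ near the boundary, which exists because rotationally symmetric area forms on the disc are determined up to their total area. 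The resulting $\ZZ_{m_i}$-invariant form $\o'=\o_{D_i}+(\text{modified fibre form})$ is closed, symplectic on the cover, and agrees with $\pi^*\o$ near the boundary, so together with $\o$ away from the surfaces it patches to an orbifold symplectic form on $X'$; coprimality and niceness make the modifications at intersections and at the singular points mutually compatible. Hence $X'$ is K\"ahler (symplectic) whenever $X$ is K\"ahler (symplectic) and the $D_i$ are complex (symplectic).
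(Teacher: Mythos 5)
You should first be aware that the paper contains no proof of this proposition: it is imported verbatim from \cite[Propositions 22 and 23]{Mu}, so the only possible comparison is with the construction carried out in that reference. Your plan follows the same standard route: charts $(w_1,w_2)\mapsto (w_1,w_2^{m_i})$ along a single surface, charts $(w_1,w_2)\mapsto(w_1^{m_j},w_2^{m_i})$ with group $\ZZ_{m_j}\x\ZZ_{m_i}\cong\ZZ_{m_im_j}$ at a nice intersection (this is exactly where coprimality enters, and note that such a point has $d=1$, so the singular set of $X'$ is still $P$), and at points of $P$ the decomposition $\CC^2/\ZZ_m=\big((\CC/\ZZ_{m_2})\x(\CC/\ZZ_{m_1})\big)/\ZZ_d$ read backwards. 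Two technical points deserve more care than you give them: (i) reading the decomposition backwards requires producing local invariants $(m_x,j_1,j_2)$ with $\gcd(j_i,m_x)=m_i$ reducing mod $d_x$ to the given weights, and checking that the resulting abelian extension of $\ZZ_{d_x}$ by $\ZZ_{m_1}\x\ZZ_{m_2}$ of order $m_1m_2d_x$ is actually cyclic --- true, but an arithmetic argument, not automatic; (ii) in the $C^\infty$ category the phrase ``transition maps lift after a choice of $m_i$-th roots'' is not innocent, since a smooth diffeomorphism fixing $\{w_2=0\}$ need not factor as $w_2\cdot h(w_1,w_2)$ with $h$ nonvanishing (unlike the holomorphic case); the clean formulation is via tubular neighbourhoods, taking the fiberwise $m_i$-th root of the normal disc bundle, which is what your symplectic normalization implicitly does. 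Your symplectic argument itself --- identifying the degeneration of the pullback of $\omega$ along the branch locus and repairing it by a rotationally invariant fiberwise interpolation inside a symplectic tubular neighbourhood --- is the expected one and is essentially correct.

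The genuine gap is in the K\"ahler case. Lifting $J$ through the holomorphic branched covers only yields a complex orbifold structure; a K\"ahler orbifold also requires an orbifold K\"ahler \emph{form}, and this is precisely the nontrivial point, because, as you yourself observe in the symplectic discussion, the pullback of $\omega$ degenerates along the branch divisor. Your symplectic repair does not transfer: it relies on a Weinstein tubular neighbourhood in which the fiber direction can be modified independently of the base, and there is no holomorphic tubular neighbourhood theorem; moreover, a rotationally symmetric modification performed in coordinates that are merely smooth has no reason to remain of type $(1,1)$ and positive with respect to $J$. The standard fix (and the one used in the cited construction) is the potential trick: if $s_i$ is a holomorphic section of $\cO(D_i)$ cutting out $D_i$ and $\|\cdot\|$ is a Hermitian metric, then $\|s_i\|^{2/m_i}$ is an orbifold-smooth function on $X'$ (on a uniformizing chart it reads $|z_2|^2\cdot(\text{smooth positive})$), and $\omega+\epsilon\, i\partial\bar\partial\big(\chi\,\|s_i\|^{2/m_i}\big)$, with $\chi$ a cutoff and $\epsilon>0$ small, is a closed orbifold $(1,1)$-form which adds the missing fiber-direction positivity along $D_i$ without destroying positivity elsewhere, also near the cyclic points of $P$. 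Without this (or an equivalent) argument, the K\"ahler half of the final statement is unproven in your proposal.
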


Once that we have the orbifold $X'$ given in Proposition \ref{prop:smooth->orb}, we need to assign local invariants.
This is not automatic, but the following result is enough for our purposes.

\begin{proposition}[{\cite[Proposition 25]{Mu}}]  \label{prop:local-invar}
Suppose that $X$ is a cyclic $4$-orbifold and the isotropy surfaces $D_i$ which pass through points of $P$
are disjoint. Take integers
$j_i$ with $\gcd(m_i,j_i)=1$ for each $D_i$. Then there exist local invariants for all surfaces $D_i$ and all points $x\in P$.
\end{proposition}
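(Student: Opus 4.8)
The plan is to reduce the assertion to a pointwise existence problem. By Proposition~\ref{prop:smooth->orb} the underlying space of $X$, the isotropy surfaces $D_i$ with their multiplicities $m_i$, and the residual orders $d_x$ at the singular points $x\in P$ are already fixed; what remains is to equip each point with a diagonal linear model as in \eqref{eqn:action}, that is, to choose weights $(a_1,a_2)$ for the action of $\ZZ_{m_x}$ so that each surface $D_i$ through $x$ receives its multiplicity $m_i$ and the chosen normal rotation number $j_i$, and so that the residual order at $x$ is the prescribed $d_x$. Since along a connected smooth locus $D_i^\circ$ the invariant is forced to be the constant $(m_i,j_i)$, and since the weights chosen at distinct points of $P$ are coupled only through the fixed numbers $j_i$, once such models exist at every point they glue to a global system of local invariants. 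Thus I would construct the weights stratum by stratum.

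The strata where no genuine choice is involved are handled first. Off the surfaces and off $P$ one has $m_x=1$. At a point of $D_i^\circ\setminus P$ the model is $\varepsilon\cdot(z_1,z_2)=(z_1,\varepsilon^{j_i}z_2)$, read off directly from the chosen $j_i$, and it is faithful because $\gcd(m_i,j_i)=1$. At a singular point of $P$ meeting no surface one has $m_x=d_x$, and any weights coprime to $d_x$ serve. At a transverse intersection $x\in D_i\cap D_j$ the disjointness hypothesis forces $d_x=1$, since two surfaces meeting at a point of $P$ would violate disjointness; hence $m_x=m_im_j$ with $\gcd(m_i,m_j)=1$, and the congruences $a_1\equiv 0\pmod{m_i}$, $a_1\equiv j_j\pmod{m_j}$ together with $a_2\equiv j_i\pmod{m_i}$, $a_2\equiv 0\pmod{m_j}$ have a solution by the Chinese Remainder Theorem, the coprimalities $\gcd(j_i,m_i)=\gcd(j_j,m_j)=1$ ensuring the action is as required.

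The heart of the matter is a singular point $x\in P$ lying on an isotropy surface. Here the disjointness hypothesis enters decisively: \emph{at most one} surface $D_i$ can pass through $x$, for a second one would meet $D_i$ at the point $x\in P$. Thus $m_x=d_xm_i$, and I must produce weights with $\gcd(a_1,m_x)=m_i$, $\gcd(a_2,m_x)=1$, $a_2\equiv j_i\pmod{m_i}$ and $\gcd(a_1,a_2,m_x)=1$. Taking $a_1=m_i$ disposes of the first and last requirements at once. For $a_2$ the congruence already yields $\gcd(a_2,m_i)=1$, because $\gcd(j_i,m_i)=1$, so only $\gcd(a_2,d_x)=1$ must be arranged. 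This is done prime by prime: if $p\mid d_x$ and $p\nmid m_i$ the residue of $a_2$ modulo $p$ is unconstrained and may be taken nonzero, while if $p\mid m_i$ then $a_2\equiv j_i\not\equiv 0\pmod{p}$ holds automatically; the Chinese Remainder Theorem assembles these into an $a_2$ coprime to $m_x$, and the decomposition $m_1m_2d=m_x$ then returns exactly the residual order $d=d_x$.

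The step I expect to be the genuine obstacle is this last one, and it is exactly where the hypothesis is indispensable. Were two surfaces $D_i,D_j$ permitted to meet at a point of $P$, one would have $m_x=d_xm_im_j$ and would be forced to realize both $j_i$ and $j_j$ as normal rotations \emph{simultaneously} with a residual $\ZZ_{d_x}$-action of prescribed order; this is an overdetermined system of congruences which in general admits no admissible solution. Restricting to configurations in which the surfaces through $P$ are disjoint reduces every singular point to at most one surface and hence to the single solvable congruence treated above. With compatible models in hand at every point, the gluing is automatic, since the invariant along each $D_i$ is the constant $(m_i,j_i)$ fixed at the outset and matches, by construction, the weight $a_2$ of each adjacent singular model.
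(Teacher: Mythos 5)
The paper itself contains no proof of Proposition \ref{prop:local-invar} --- it is quoted from \cite[Proposition 25]{Mu} --- so your argument can only be judged against what the statement requires, and there it has a genuine gap at precisely the step you single out as the heart of the matter. The local invariants asked for must present the charts of the \emph{given} orbifold $X$: at a singular point $x\in P$ the germ of $X$ is a fixed (up to equivariant isomorphism) action of $\ZZ_{m_x}$ as in \eqref{eqn:action}, and this germ carries strictly more information than the residual order $d_x$, namely the residual singularity type $\frac{1}{d_x}(e_1,e_2)$ up to equivalence. Your construction discards this: you set $a_1=m_i$ and choose $a_2$ subject only to $a_2\equiv j_i\pmod{m_i}$ and $\gcd(a_2,d_x)=1$, so the outcome is a germ with the right numerology $(m_i,\,j_i\bmod m_i,\,d_x)$ but possibly not isomorphic to the germ of $X$ at $x$. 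Concretely, the actions of $\ZZ_{27}$ on $\CC^2$ with weights $(3,1)$ and $(3,4)$ both have a surface branch of multiplicity $3$, normal rotation $\equiv 1\pmod 3$, and residual order $d_x=9$, yet they are inequivalent, since their residual singularities are $\frac{1}{9}(1,1)$ and $\frac{1}{9}(1,4)$. If $X$ has the second germ at $x$ and your recipe outputs $a_2=1$, you have produced local invariants of a \emph{different} cyclic orbifold, not of $X$. Matching ``the prescribed $d_x$'' is not the correct compatibility condition, and nothing in your proof reconciles the chosen weights with the actual structure of $X$ at $x$.

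The gap is fixable, and the repair uses exactly the arithmetic you already performed, applied to the right object. Since $X$ is a cyclic orbifold, weights $(a,b)$ presenting the germ at $x$ exist by definition, with $\gcd(a,m_x)=m_i$ (the branch of $D_i$) and $\gcd(b,m_x)=1$ (by disjointness there is no second branch); the only freedom that keeps the germ of $X$ is replacing $(a,b)$ by $(ua,ub)$ with $u\in(\ZZ/m_x)^{*}$, a change of generator, and the condition to arrange is $ub\equiv j_i\pmod{m_i}$. Such a $u$ exists because the reduction $(\ZZ/m_x)^{*}\to(\ZZ/m_i)^{*}$ is surjective, which is proved by your own prime-by-prime CRT argument. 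Separately, your closing justification of the hypothesis is incorrect: if two surfaces $D_i,D_j$ met at $x\in P$, niceness forces $\gcd(m_i,m_j)=1$, and then the two congruences $ub\equiv j_i\pmod{m_i}$ and $ua\equiv j_j\pmod{m_j}$ determine $u$ modulo $m_im_j$ by the Chinese Remainder Theorem and lift to a unit modulo $m_x$; the system is not overdetermined. The disjointness hypothesis is what the paper's application actually supplies (only $T,T'$ meet the singular set) and what reduces the bookkeeping to a single congruence per point --- it is not forced by an arithmetic obstruction of the kind you describe.
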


A Seifert bundle over a cyclic $4$-orbifold $X$, endowed with local invariants, is an oriented  
$5$-manifold $M$ equipped with a smooth $S^1$-action such that $X$ is the
space of orbits, and the projection $\pi:M \to X$ satisfies that an orbifold chart $U = \tilde U/\ZZ_m$ of $X$ we have
that 
 $$
 \pi^{-1}(U)= (\tilde U\x S^1)/\ZZ_m\, ,
 $$
where the action of $S^1$ is given by $ \varepsilon\cdot (z_1,z_2,u)=(\varepsilon^{j_2} z_1, \varepsilon^{j_1} z_2,\varepsilon u)$,
and the action in the base is (\ref{eqn:action}).

For a Seifert bundle $\pi:M\to X$, there is a well-defined Chern class \cite[Def.\ 13]{MRT}
 $$
 c_1(M) \in H^2(X,\QQ).
 $$
If we set
 $$
 \ell=\lcm ( m_x \, | \, x\in X), \qquad \mu=\lcm(m_i),
 $$
where $m_i$ are the multiplicitites of $D_i$, then $M/\ZZ_\ell$ is a line bundle over $X$, and $M/\ZZ_\mu$ is a line bundle over $X-P$.
Therefore $c_1(M/\ZZ_\ell)=\ell\, c_1(M)\in H^2(X,\ZZ)$ and $c_1(M/\ZZ_\mu) \in H^2(X-P,\ZZ)$ are
integral classes.

\begin{proposition}[{\cite[Theorem 7.1.3]{BG}}]
Let $(M,\eta , \xi , \Phi , g)$ 
be a quasi-regular K-contact manifold. Then the space 
of leaves $X$ has a natural structure of an almost-K\"ahler cyclic orbifold where the projection $M \to X$ is a Seifert bundle.
Furthermore, if $(M,\eta,\xi,\Phi,g)$ 
is Sasakian, then $X$ is a K\"ahler orbifold. 

Conversely, let $(X,\omega,J,g)$ be an almost-K\"ahler cyclic orbifold with 
$[\omega] \in H^2(X,\QQ)$, and let $\pi: M \to X$ be a Seifert 
bundle with $c_1(M)=[\omega]$. Then $M$ admits a K-contact structure $(M,\eta,\xi,\Phi,g)$
such that $\pi^*(\omega)=d \eta$. 
\end{proposition}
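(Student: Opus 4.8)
The plan is to establish the two directions of the correspondence separately.

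For the forward direction, quasi-regularity means every orbit of the Reeb field $\xi$ is a circle; since $\xi$ is Killing and $M$ is compact, its flow is complete and generates an effective isometric action of $S^1$ whose orbits are exactly these circles. First I would apply the slice theorem: at each $x\in M$ the isotropy group is a finite subgroup of $S^1$, hence cyclic $\ZZ_m$, and a $\ZZ_m$-invariant slice transverse to the orbit provides an orbifold chart $\tilde U/\ZZ_m$ on the leaf space $X=M/S^1$. Since the linear isotropy representation on the $4$-dimensional slice preserves $\Phi$ and $g$, it is unitary and hence of the diagonal form \eqref{eqn:action}; thus $X$ is a cyclic $4$-orbifold with local models $\CC^2/\ZZ_m$, and $\pi\colon M\to X$ is a Seifert bundle.

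Next I would descend the contact data. The distribution $\cD=\ker\eta$ is $S^1$-invariant and everywhere transverse to $\xi$, so $d\pi$ identifies each $\cD_x$ with $T_{\pi(x)}X$; as $\Phi$ preserves $\cD$ and commutes with the action, $\Phi|_{\cD}$ descends to an almost complex structure $J$ on $X$. The form $d\eta$ is basic, satisfying $i_\xi\,d\eta=0$ and $\mathcal{L}_\xi\,d\eta=0$, so it descends to a closed orbifold $2$-form $\omega$ with $\pi^*\omega=d\eta$, and nondegeneracy of $d\eta$ on $\cD$ gives $\omega^2>0$. The compatibility and positivity axioms of the K-contact structure translate directly into the statement that $(J,\omega,g_X)$ is an almost Kähler triple, where $g_X$ is the descent of $g|_{\cD}$. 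Finally, in the Sasakian case the integrability of the almost complex structure $I$ on the cone $C(M)$ is equivalent to integrability of the transverse structure $J$: the normality condition $N_\Phi=-d\eta\otimes\xi$ descends to $N_J=0$, so $X$ is Kähler.

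For the converse, I start from the circle action on $M$ with infinitesimal generator $\xi$ and aim to produce a contact form. The crucial step is to choose a connection $1$-form $\eta$ for the Seifert bundle with $\eta(\xi)=1$ and prescribed curvature $d\eta=\pi^*\omega$. Such $\eta$ exists because $c_1(M)=[\omega]$ in $H^2(X,\QQ)$: by orbifold Chern-Weil theory the curvature of any Seifert connection $\eta_0$ satisfies $d\eta_0=\pi^*\Omega_0$ with $[\Omega_0]=c_1(M)$, so $\Omega_0-\omega=d\beta$ for a basic $1$-form $\beta$, and replacing $\eta_0$ by $\eta=\eta_0-\pi^*\beta$ yields $d\eta=\pi^*\omega$ while preserving $\eta(\xi)=1$. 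Then $\eta\wedge(d\eta)^2=\eta\wedge\pi^*(\omega^2)>0$ since $\omega^2$ is a volume form and $\eta$ restricts nontrivially to the fibers, and $\xi$ is the Reeb field because $i_\xi\eta=1$ and $i_\xi\,d\eta=i_\xi\pi^*\omega=0$. I then define $\Phi$ by lifting $J$ horizontally into $\cD$ and setting $\Phi(\xi)=0$, so $\Phi^2=-\Id+\xi\otimes\eta$, and put $g=d\eta(\Phi\,\cdot\,,\cdot)+\eta\otimes\eta$; the remaining axioms, including that $\xi$ is Killing, hold because the whole construction is $S^1$-invariant.

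I expect the main obstacle to be the connection step in the converse: producing an honest connection $1$-form whose curvature equals $\pi^*\omega$ exactly, not merely cohomologically, while respecting the Seifert local models over the singular points $P$ and the isotropy surfaces. This needs the orbifold Chern-Weil theory together with the equality of $[\omega]$ and $c_1(M)$ in $H^2(X,\QQ)$, and some care to average over $S^1$ and to patch the basic correction form $\beta$ across orbifold charts. In the forward direction the analogous delicate point is confirming that the slice representation produces precisely the cyclic model \eqref{eqn:action}, but this reduces to bookkeeping once the slice theorem and the unitarity of the isotropy action are in hand.
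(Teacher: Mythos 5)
Your proposal is correct in outline, and it is essentially the standard orbifold Boothby--Wang argument: the paper itself gives no proof of this proposition, quoting it verbatim from Boyer--Galicki \cite[Theorem 7.1.3]{BG}, and the proof there is exactly the one you reconstruct (the Reeb flow of a quasi-regular K-contact structure generates an isometric $S^1$-action, the slice theorem produces the cyclic orbifold charts and Seifert bundle structure, the transverse data $\Phi|_{\cD}$, $d\eta$, $g|_{\cD}$ descend to an almost K\"ahler triple with normality descending to integrability in the Sasakian case; conversely one builds a connection $1$-form with curvature exactly $\pi^*\omega$ and assembles $(\eta,\xi,\Phi,g)$ from it). The two delicate points you flag --- that quasi-regularity together with the Killing condition genuinely yields an $S^1$-action (via the closure of the flow in the isometry group being a torus that effectiveness forces to be a circle), and that orbifold Chern--Weil theory plus $c_1(M)=[\omega]$ lets one correct a Seifert connection by a basic exact form so its curvature equals $\pi^*\omega$ on the nose --- are precisely the ones treated carefully in that reference, so nothing in your outline would fail.
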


\subsection{Topology of a Seifert bundle}\label{subsec:topology}

Let $X$ be an oriented cyclic $4$-orbifold, $P\subset X$ the set of singular points,
 and $D_i \subset X$ the isotropy surfaces with coefficients $m_i>1$.  
Suppose that there are local invariants ${\mathbf{j}}_{D_i}=(m_i,j_i)$ for each $D_i$ and  
$\mathbf{j}_x$, for every $x\in P$.
Let $0<b_i<m_i$ such that $j_ib_i\equiv 1 \pmod{m_i}$, and  let $B$ be a complex line bundle on $X$. Then
there is a Seifert bundle $\pi:M  \to X$ with the given local invariants and first Chern class 
 \begin{equation}\label{eqn:c1}
 c_1(M)=c_1(B) + \sum_i \frac{b_i}{m_i} [D_i]. 
 \end{equation}

As we aim for simply connected $5$-manifolds $M$, we need to characterize
the first homology group by using the following result.

\begin{proposition}[{\cite[Theorem 36]{Mu}}] \label{thm:16MRT}
Suppose that $\pi:M\to X$ is a quasi-regular Seifert bundle with isotropy surfaces $D_i$ with multiplicities $m_i$,
and singular locus $P\subset X$. 
Let $\mu=\lcm (m_i)$. Then $H_1(M,\ZZ)=0$ if and only if
 \begin{enumerate}
 \item $H_1(X,\ZZ)=0$,
 \item $H^2(X,\ZZ)\to \mathop{\oplus}_i H^2(D_i,\ZZ_{m_i})$ is surjective,
 \item $c_1(M/\ZZ_\mu)\in H^2(X-P,\ZZ)$ is a primitive class.
 \end{enumerate}
 Moreover, $H_2(M,\ZZ)=\ZZ^k\oplus (\mathop{\oplus}_i \ZZ_{m_i}^{2g_i})$, $g_i=g(D_i)$ the genus of $D_i$, $k+1=b_2(X)$.
\end{proposition}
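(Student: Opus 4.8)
The plan is to analyze the Seifert bundle $\pi:M\to X$ through a Gysin-type long exact sequence comparing the integral cohomology of $M$ with that of the orbifold $X$, and then to extract the two assertions by treating the free and torsion parts separately. Since $M$ is a closed oriented $5$-manifold, the condition $H_1(M,\ZZ)=0$ is equivalent, by the universal coefficient theorem, to the vanishing of $b_1(M)$ together with the absence of torsion in $H^2(M,\ZZ)$; this is the reformulation I would use throughout, recalling also that $\text{Tors}\,H_2(M,\ZZ)\cong\text{Tors}\,H^3(M,\ZZ)$ via Poincar\'e duality.

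First I would settle the free part rationally. Because $c_1(M)\in H^2(X,\QQ)$ is a nonzero class, the rational Gysin sequence of the $S^1$-action shows that cup product with $c_1(M)$ gives an injection $H^0(X,\QQ)\to H^2(X,\QQ)$, whence $b_1(M)=b_1(X)$ and $b_2(M)=b_2(X)-1=k$. Condition (1) then forces $b_1(M)=0$, and simultaneously gives $H^1(X)=0$, which is what makes the count $b_2(M)=b_2(X)-1$ clean (no contribution from $\ker(\cup c_1\colon H^1(X)\to H^3(X))$).

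Next I would handle the integral torsion by a Mayer--Vietoris decomposition of $X$ into the smooth locus $X^\circ=X\setminus(\bigcup_i D_i\cup P)$ and tubular neighbourhoods of the isotropy surfaces $D_i$ and of the singular points $P$. Over $X^\circ$ the map $\pi$ is an honest circle bundle, while along $D_i^\circ$ the local model $\pi^{-1}(U)=(\tilde U\times S^1)/\ZZ_{m_i}$ identifies the relevant link computation: the circle over a loop in $D_i$ wraps $m_i$ times, so reducing $H^1(D_i,\ZZ)=\ZZ^{2g_i}$ modulo $m_i$ produces exactly the torsion summand $\ZZ_{m_i}^{2g_i}$ in $H_2(M,\ZZ)$. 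The surjectivity in condition (2), namely $H^2(X,\ZZ)\to\bigoplus_i H^2(D_i,\ZZ_{m_i})$, is what guarantees that the torsion created along the $D_i$ does not leak into $H^1(M,\ZZ)$, i.e.\ that the connecting maps align so that $H^2(M,\ZZ)$ stays torsion-free; and the primitivity of $c_1(M/\ZZ_\mu)$ in condition (3) ensures that the Euler-class map hits the generator of the fibre, killing the remaining potential $\ZZ_\mu$-torsion coming from the multiple fibre. Assembling these gives $H_1(M,\ZZ)=0$ if and only if (1)--(3) hold, and reading off the surviving terms gives $H_2(M,\ZZ)=\ZZ^k\oplus\bigoplus_i\ZZ_{m_i}^{2g_i}$.

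The main obstacle I anticipate is the integral bookkeeping near the singular points $P$ and the intersection points $D_i\cap D_j$, where the local model is a quotient $\CC^2/\ZZ_m$ rather than a smooth chart and the link is a lens space $S^3/\ZZ_d$ as described above. One must verify that these $0$-dimensional strata contribute no free homology and no torsion beyond what the surfaces already supply; this is precisely where the coprimality $\gcd(m_i,m_j)=1$ along intersections and the primitivity hypothesis (3) are used to control the differentials and the local connecting maps. Once the contributions at $P$ are shown to be absorbed, the global sequence collapses to the stated answer with $k+1=b_2(X)$, completing both parts of the proposition.
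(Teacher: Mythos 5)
The first thing to observe is that the paper does not prove this proposition at all: it is quoted from \cite[Theorem 36]{Mu}, so there is no internal proof to compare against. The proof in that reference (extending Koll\'ar's treatment \cite{K} of the semi-regular case) runs through the Leray spectral sequence of $\pi$, whose key input is the identification of $R^1\pi_*\ZZ_M$ with the subsheaf of $\ZZ_X$ of sections divisible by $m_i$ along $D_i$, i.e.\ an exact sequence $0 \to R^1\pi_*\ZZ_M \to \ZZ_X \to \oplus_i (\ZZ_{m_i})_{D_i} \to 0$, suitably corrected at the points of $P$; conditions (2) and (3) then appear as the exact conditions making the relevant differentials and extensions behave. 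Your rational Gysin argument for the free part is essentially right and agrees with what that spectral sequence gives, modulo one point you must justify: that $c_1(M)\neq 0$ in $H^2(X,\QQ)$. This is not a hypothesis; in the forward direction it follows from $H_1(M,\ZZ)=0$, and in the backward direction from the primitivity in (3).

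The genuine gap is in the integral torsion part, which is the heart of the theorem. Your Mayer--Vietoris plan is a statement of intent rather than a proof: you never compute $H^*(\pi^{-1}(\nu(D_i)),\ZZ)$ or $H^*(\pi^{-1}(\nu(p)),\ZZ)$ for $p\in P$, never write down the connecting maps, and the two decisive claims --- that surjectivity in (2) prevents torsion from entering $H^2(M,\ZZ)$, and that primitivity in (3) kills the residual $\ZZ_\mu$-torsion --- are precisely the content of the theorem, asserted rather than derived; both directions of the ``if and only if'' need them. Moreover, you explicitly postpone the analysis at the singular points, but that is exactly what distinguishes this statement from Koll\'ar's smooth case and is the reason condition (3) is formulated in $H^2(X-P,\ZZ)$ rather than $H^2(X,\ZZ)$: only over $X-P$ is $M/\ZZ_\mu$ an honest line bundle, and the local torsion $H^2(S^3/\ZZ_{d_p},\ZZ)\cong\ZZ_{d_p}$ of the links at $p\in P$ must be shown to cancel out of $H_2(M,\ZZ)$ --- a nontrivial cancellation, visible in the final formula, in which no $d_p$ appears. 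As written, the proposal yields neither implication nor the formula $H_2(M,\ZZ)=\ZZ^k\oplus(\oplus_i \ZZ_{m_i}^{2g_i})$ without substantial additional work.
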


To construct a K-contact manifold from a Seifert bundle, we use the following:

\begin{lemma}[{\cite[Lemma 39]{Mu}}]  \label{lem:20MRT}
Let $(X,\omega)$ be a symplectic cyclic $4$-orbifold with isotropy locus given by surfaces 
$D_i$ and singular locus $P$. Assume given local invariants
$\{{\mathbf{j}}_{D_i}=(m_i,j_i), {\mathbf{j}_x}, x\in P\}$ for $X$.
Let $b_i$ with $j_ib_i\equiv 1 \pmod{m_i}$, $\mu=\lcm(m_i)$. 
Then there is a Seifert bundle $\pi:M\to X$ such that:
\begin{enumerate}
\item It has Chern class $c_1(M)=[\hat\omega]$ for some orbifold symplectic form $\hat\omega$ on $X$.
\item If $\sum \frac{b_i \mu}{m_i} [D_i] \in H^2(X-P,\ZZ)$ is primitive and the second Betti number $b_2(X)\geq 3$, then 
then we can further have that $c_1(M/\ZZ_\mu) \in H^2(X-P,\ZZ)$ is primitive.
\end{enumerate}
\end{lemma}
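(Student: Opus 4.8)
For part (1) the plan is to exploit the freedom in the choice of the line bundle $B$. By \eqref{eqn:c1}, for any complex line bundle $B$ on $X$ there is a Seifert bundle with the prescribed local invariants and $c_1(M)=c_1(B)+\gamma$, where $\gamma=\sum_i \frac{b_i}{m_i}[D_i]\in H^2(X,\QQ)$ and $\mu\gamma$ is integral. Since the symplectic form satisfies $[\omega]\in H^2(X,\QQ)$, I would pick $q>0$ with $q[\omega]\in H^2(X,\ZZ)$, so that $q\omega$ is a symplectic form with integral class, and take $B$ with $c_1(B)=N\,q[\omega]$ for a large integer $N$. Then $c_1(M)=N\bigl(q[\omega]+\tfrac1N\gamma\bigr)$, and because the set of classes represented by orbifold symplectic forms is an open cone containing $q[\omega]$, for $N$ large $q[\omega]+\tfrac1N\gamma$ is again symplectic; hence $c_1(M)=[\hat\omega]$ for some orbifold symplectic form $\hat\omega$. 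The same cone argument shows that $c_1(B)=N q[\omega]+\delta$ stays symplectic for \emph{any} fixed integral $\delta$ once $N$ is large, and this flexibility is what I will use in (2).

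For part (2), recall that $M/\ZZ_\mu$ is a genuine line bundle over $X-P$, so $c_1(M/\ZZ_\mu)=\mu\,c_1(M)|_{X-P}=u+\mu\,\alpha$, where $\alpha=c_1(B)|_{X-P}$ and $u:=\bigl(\sum_i \frac{b_i\mu}{m_i}[D_i]\bigr)|_{X-P}$ is primitive by hypothesis. The task is to choose $c_1(B)$ inside the symplectic cone so that $w:=u+\mu\,\alpha$ is primitive. For every prime $p$ dividing $\mu$ this is automatic: $w\equiv u\pmod p$, and $u$ being primitive is nonzero modulo $p$, so $p\nmid w$. Thus only primes $p\nmid\mu$, for which $\mu$ is invertible modulo $p$, can obstruct primitivity.

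The remaining, and main, difficulty is to defeat all primes $p\nmid\mu$ at once while staying in the symplectic cone. This is where $b_2(X)\geq 3$ enters: removing the finite set $P$ leaves the free rank of $H^2$ unchanged, so $H^2(X-P,\ZZ)$ has free rank $b_2(X)\geq 3$ and the restriction $H^2(X,\ZZ)\to H^2(X-P,\ZZ)$ is injective with finite-index image on the free part. I would take $c_1(B)=N q[\omega]+M\,\delta_0$ with $N$ large, $|M|\leq \e N$, and $\delta_0$ a fixed integral class whose restriction is not proportional to $u$ over $\QQ$ (available since the rank is at least two); for such $(N,M)$ the class $c_1(M)$ remains symplectic by the cone argument of part (1). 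Writing $a=\mu q[\omega]|_{X-P}$ and $b=\mu\,\delta_0|_{X-P}$, the target becomes the two-parameter family $w=u+Na+Mb$. For each prime $p\nmid\mu$ the bad locus $w\equiv 0\pmod p$ is a proper affine subset of $(\ZZ/p)^2$ (it never fills the plane, since $u\not\equiv 0$), and because $b$ is transverse to $u$ the family is not confined to a line through the origin; a standard sieve (the density of primitive vectors in a rank-$\geq 2$ affine family is positive) then yields $(N,M)$ with $N$ arbitrarily large and $w$ primitive. This produces a primitive $c_1(M/\ZZ_\mu)$ and finishes (2).

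I expect the genuine obstacle to be precisely this last step: \emph{decoupling} the two demands on $c_1(B)$ — symplectic positivity on $X$, a condition on the value of the class in the open cone, versus primitivity on $X-P$, an arithmetic divisibility condition. The hypotheses are tailored to it: $[\omega]\in H^2(X,\QQ)$ supplies an integral symplectic direction to push along, primitivity of $u$ disposes of every prime dividing $\mu$ for free, and $b_2(X)\geq 3$ provides enough independent directions in $H^2(X-P,\ZZ)$ to steer the translation away from the line $\QQ u$ and run the sieve without leaving the symplectic cone.
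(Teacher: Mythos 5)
A preliminary remark: this paper never proves Lemma \ref{lem:20MRT} at all --- it is imported verbatim from \cite[Lemma 39]{Mu} --- so there is no in-paper argument to measure your proposal against. What follows therefore judges your plan on its own terms.

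Your part (1) is correct and is the argument one expects: exploit the freedom of $B$ in \eqref{eqn:c1}, take $c_1(B)=Nq[\omega]$ with $q[\omega]$ integral, and absorb the fixed rational defect $\gamma=\sum\frac{b_i}{m_i}[D_i]$ by openness of the orbifold symplectic cone. One small repair: the lemma does not assume $[\omega]\in H^2(X,\QQ)$, so you must first perturb $\omega$ to a symplectic form with rational cohomology class (rational classes are dense and the symplectic condition is open on a compact orbifold); since conclusion (1) only asks for \emph{some} $\hat\omega$, this costs nothing.

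In part (2), the reduction $c_1(M/\ZZ_\mu)=u+\mu\,c_1(B)|_{X-P}$ and the observation that primitivity of $u$ disposes of every prime $p\mid\mu$ are correct, and the two-parameter family $c_1(B)=Nq[\omega]+M\delta_0$ with $|M|\le\e N$ does stay inside the symplectic cone. The one place where I push back is the phrase ``a standard sieve'': that clause is the entire content of (2), and the principle is delicate --- if the bad locus were a line in $(\ZZ/p)^2$ (density $1/p$) for infinitely many $p$, no sieve could succeed. Concretely, for $a=u$, $b=0$ every bad set is a proper affine subset of $(\ZZ/p)^2$, yet $w=(1+N)u$ is primitive only for $N\in\{0,-2\}$; and for $a=b=3u$ the only primitive members have $M=-N$, outside your region $|M|\le\e N$. (Both violate your transversality condition, which shows that condition is not decorative but essential.) So the step you must actually prove is that line-sized bad sets occur for only finitely many primes, and this is exactly what non-proportionality buys after a short case analysis: if $a,b$ are independent over $\QQ$, then for all but finitely many $p$ the map $(N,M)\mapsto Na+Mb$ is injective mod $p$, so the bad set is a point or empty; if $a,b$ are dependent over $\QQ$, then (since the restriction $H^2(X,\ZZ)\to H^2(X-P,\ZZ)$ is rationally injective, $b\neq 0$) the condition $b\not\parallel u$ forces $u\notin\la a,b\ra_\QQ$, so the bad set is empty for all but finitely many $p$. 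The finitely many exceptional primes are then handled by fixing a good residue class via CRT (each bad set being proper), the remaining primes contribute density $O(1/p^2)$ plus boundary terms whose total is $o(N^2)$, and primes exceeding the size of the coordinates of $w$ would force the exact equation $u+Na+Mb=0$, which has at most one solution and is avoided by taking $N$ large. With that paragraph written out, your plan is a complete proof; note that it in fact only uses $b_2(X)\ge 2$, which is consistent with (but weaker than) the hypothesis $b_2(X)\ge 3$ carried by the cited source.
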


Finally, in order to control the fundamental group, we introduce the following.

\begin{definition} Let $X$ be an oriented cyclic $4$-orbifold with singular locus $P$ and
isotropy surfaces $D_i$ of multiplicity $m_i$. The orbifold fundamental group $\pi_1^{\orb}(X)$ is defined as 
   $$
   \pi_1^{\orb}(X)=\pi_1\big(X-((\cup D_i)\cup P)\big)/\langle \gamma_i^{m_i}=1\rangle,
   $$
where $\langle\gamma_i^{m_i}=1\rangle$ denotes the relation $\gamma_i^{m_i}=1$ on $\pi_1\big(X-((\cup D_i)\cup P)\big)$,
for any small loop $\gamma_i$ around the surface $D_i$.
\end{definition}

We have the following exact sequence \cite[Theorem 4.3.18]{BG},
  $$
  \cdots\rightarrow \pi_1(S^1)=\ZZ\rightarrow\pi_1(M)\rightarrow\pi_1^{\orb}(X)\rightarrow 1.
  $$
If $H_1(M,\ZZ)=0$ and $\pi_1^{\orb}(X)$ is abelian, then  
$\pi_1(M)$ must be trivial.  This holds since if $H_1(M,\ZZ)=0$, then $\pi_1(M)$ has no abelian quotients. As $\pi_1^{\orb}(X)$ is assumed abelian,
we find that $\pi_1(M)$ is a quotient of $\ZZ$, hence again abelian. This implies that $\pi_1(M)=1$. In such case $M$ is a Smale-Barden manifold.

\subsection{Symplectic constructions}
We review different constructions in symplectic geometry that we will use later. We start with the Gompf symplectic sum \cite{Gompf}. 
Let $S_1$ and $S_2$ be
closed symplectic $4$-manifolds, and $F_1\subset S_1$, $F_2\subset S_2$ symplectic surfaces of
the same genus and with $F_1^2=-F_2^2$. Fix a symplectomorphism $F_1\cong F_2$.
If $\nu_j$ is the normal bundle to $F_j$, then
there is a reversing-orientation bundle isomorphism $\psi: \nu_1\rightarrow \nu_2$. 
Identifying the normal bundles $\nu_j$ with tubular neighbourhoods $\nu(F_j)$ of $F_j$ in $S_i$, one 
has a symplectomorphism  $\varphi: \nu(F_1) -F_1 \rightarrow \nu(F_2) -F_2$ by composing $\psi$ with 
the diffeomorphism $x\mapsto \frac{x}{||x||^2}$ that turns each punctured normal fiber inside out. 
The Gompf symplectic sum is the manifold obtained from 
$(S_1-F_1) \sqcup (S_2-F_2)$ by gluing with $\varphi$ above. It is proved in \cite{Gompf} that 
this surgery yields a symplectic manifold, denoted $S=S_1\#_{F_1=F_2} S_2$.  
The Euler characteristic of the Gompf symplectic sum is given by 
$\chi(S)=\chi(S_1)+\chi(S_2)-2\chi(F)$, where $F=F_1=F_2$.

In \cite[Lemma 24]{MRT}, it is proved that if $D_1\subset S_1$ and $D_2\subset S_2$ are symplectic surfaces intersecting
transversally and positively with $F_1,F_2$, respectively, such that $D_1\cdot F_1=D_2\cdot F_2=d$. Then
$D_1,D_2$ can be glued to a symplectic surface $D=D_1\# D_2 \subset S_1 \#_{F_1=F_2} S_2$
with self-intersection $D^2=D_1^2+D_2^2$ and genus $g(D)=g(D_1)+g(D_2)+d-1$.
This can be done with several surfaces simultaneously.

The following result is very useful to make intersections nice.

\begin{lemma}[{\cite[Lemma 6]{MRT}}] \label{lem:symplectic orthogonal}
Let $(X,\omega)$ be a symplectic $4$-manifold, and suppose that $D, D' \subset X$ are symplectic surfaces 
intersecting transversally and positively. Then we can isotop $D$ (small in the $C^0$-sense, only around the points
of $D\cap D'$) so that the image of $D$ under the isotopy is symplectic, $D$ and $D'$ intersect nicely (symplectically orthogonal).
\end{lemma}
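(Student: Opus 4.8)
The plan is to localise the statement at each of the finitely many transverse intersection points $x\in D\cap D'$ and to reduce it to a tangential normal form, which is then achieved by a $C^0$-small isotopy supported near $x$. The first step is to note that, at a smooth point $x$ of a symplectic $4$-manifold, ``$D$ and $D'$ intersect nicely'' is equivalent to the single condition $T_xD'=(T_xD)^{\omega}$ that the tangent planes be $\omega$-orthogonal. Indeed, if $T_xD,T_xD'$ are transverse, symplectic and $\omega$-orthogonal, a relative Darboux--Moser argument produces symplectic coordinates $(z_1,z_2)$ near $x$ in which $D=\{z_2=0\}$ and $D'=\{z_1=0\}$; these are the adapted coordinates required by the definition. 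Thus it is enough to isotop $D$, near each $x$, so that $T_xD$ becomes $(T_xD')^{\omega}$ while $D$ stays symplectic.

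For the local model I would choose a Darboux chart identifying a ball $B$ around $x$ with $(\CC^2,\omega_0)$ and, after a further symplectomorphism straightening the symplectic surface $D'$, arrange $D'\cap B=W:=\{z_1=0\}$. Let $W^{\omega}=\{z_2=0\}$ be its symplectic complement. Since $D$ meets $D'$ transversally, $D\cap B$ is a symplectic graph $\{(a,f(a)):a\in W^{\omega}\}$ with $f(0)=0$ and $df(0)=A\colon W^{\omega}\to W$. A short computation, using that $\omega_0$ pairs $W^{\omega}$ and $W$ trivially, shows that the graph of a linear map $A$ is a positively oriented symplectic plane precisely when $\det A>-1$; the target plane $W^{\omega}=(T_xD')^{\omega}$ is the graph of $A=0$, and the segment $sA$ ($s\in[0,1]$) satisfies $\det(sA)=s^2\det A>-1$, so it stays among positively oriented symplectic planes transverse to $W$. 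This identifies the rotation I must perform.

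I would then carry out the isotopy in two stages. In the first stage I replace $D$ inside $B$ by its tangent plane at $x$: writing $f(a)=Aa+R(a)$ with $R(a)=O(|a|^2)$, I cut off the remainder, $\hat f(a)=Aa+(1-\chi(a))R(a)$, with $\chi$ a bump equal to $1$ near $0$. Because $R$ and $dR$ are $O(|a|)$ on the support of $\chi$, this perturbation is $C^1$-small once $B$ is small, so $\hat D=\{(a,\hat f(a))\}$ stays symplectic, equals $D$ near $\partial B$, and equals the plane $\{(a,Aa)\}$ near $x$. The main obstacle is the second stage, rotating this linear plane to $W^{\omega}$ by a localised isotopy that keeps the graph symplectic: a naive cutoff $f_1(a)=(1-\chi(a))Aa$ introduces a rank-one shear term $-Aa\,(\nabla\chi)^{\top}$ whose size is of order $|A|$ \emph{independently of the radius of $B$}, so it may drive $\det Df_1$ below $-1$. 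I would overcome this by spreading the rotation over logarithmically many scales: choose radii $\varepsilon\ll\delta$ and set $f_1(a)=s(|a|)\,Aa$, where $s$ depends only on $\log|a|$, increasing from $s=0$ on $\{|a|\le\varepsilon\}$ to $s=1$ on $\{|a|\ge\delta\}$, with $ds/d\log|a|$ of size $O(1/\log(\delta/\varepsilon))$. Then $Df_1(a)=s(|a|)A+\big(ds/d\log|a|\big)\,Aa\,a^{\top}/|a|^2$, whose shear term now has size $O(|A|/\log(\delta/\varepsilon))$; since $\det\big(s(|a|)A\big)=s^2\det A>-1$ with a fixed positive gap $1+\det A$, taking $\log(\delta/\varepsilon)$ large (e.g.\ $\varepsilon=\delta^2$, $\delta\to0$) keeps $\det Df_1>-1$ everywhere, while the whole modification is $C^0$-small of size $O(|A|\delta)$ and supported in $B$. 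The resulting surface is symplectic, coincides with $D$ off $B$, and has tangent plane $W^{\omega}$ at $x$.

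Finally, since the intersection points are isolated and finite in number, I would perform this construction in disjoint balls around each of them simultaneously; the perturbations do not interfere and $D'$ is untouched. By the first step the resulting $D$ then meets $D'$ nicely (symplectically orthogonally and positively) at every intersection point, which is the assertion of the lemma. I expect the delicate point to be exactly the second stage: quantifying the logarithmic spreading so that the unavoidable shear stays within the symplectic cone $\{\det>-1\}$, together with the routine but slightly technical relative Darboux--Moser normal form used in the first step.
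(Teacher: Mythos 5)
Your proof is correct, but there is nothing in this paper to compare it against: the lemma is imported verbatim from \cite{MRT} (Lemma~6 there) and the present paper gives no proof of it. Judged on its own, your argument is a complete and quantitatively careful version of the standard strategy (Darboux chart in which $D'=\{z_1=0\}$, write $D$ as a graph over the $\omega$-orthogonal plane $\{z_2=0\}$, flatten to the tangent plane, then interpolate the linear graph to the zero graph by a radial cutoff). Your two key technical points are both right: a graph of a linear map $A$ is a positively oriented symplectic plane precisely when $\det A>-1$, so the segment $sA$ stays symplectic; and the naive cutoff genuinely fails, since its rank-one shear term has size of order $|A|$ independently of the radius, which your logarithmic spreading reduces to $O(|A|/\log(\delta/\varepsilon))$ while $\det$ has a fixed gap above $-1$. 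Two points deserve to be made explicit. First, the inequality $\det A>-1$ for the given surface is exactly where the hypothesis of \emph{positive} intersection enters: transversality plus the symplectic condition only give $\det A\neq-1$, and the sign of $1+\det A$ equals the local intersection sign; this also explains why no ambient symplectomorphism fixing $D'$ could do the job (its differential at $x$ preserves $\det A$), so the freedom to isotop $D$ as a submanifold is essential. Second, your opening reduction (``nice'' is equivalent to $\omega$-orthogonality of tangent planes, via a relative Darboux--Moser argument) is the most delicate claim you invoke --- the Moser isotopy must preserve both surfaces, which requires first matching the induced area forms --- but it is also unnecessary: after your second stage, $D$ and $D'$ literally coincide with the two coordinate planes of a Darboux chart near each intersection point, so the adapted coordinates demanded by the paper's definition of a nice intersection are exhibited directly. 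With those clarifications the proof is complete.
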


To produce symplectic cyclic $4$-orbifolds, we shall contract chains of symplectic surfaces. 

\begin{definition} \label{def:chain}
Let $X$ be a symplectic $4$-manifold. A \emph{chain of symplectic surfaces} $\cC=C_1\cup \ldots \cup C_l$
consists of $l\geq 1$ symplectic surfaces $C_i$, of genus $g=0$ and self-intersection $C_i^2=-b_i\leq -2$,
such that $C_i\cap C_j=\emptyset$ for $|i-j|>1$ and $C_i\cap C_{i+1}$ is a nice intersection, $i=1,\ldots, l-1$. 
\end{definition}

Note that if we have a chain $\cC=C_1\cup \ldots \cup C_l$ as in Definition \ref{def:chain} where the
intersections $C_i\cap C_{i+1}$ are only transverse and positive, then Lemma \ref{lem:symplectic orthogonal}
allows to perturb the surfaces so that the chain satisfies that the intersections are nice.

\begin{proposition}\label{prop:blow-down}
 Suppose that $\cC=C_1\cup \ldots \cup C_l$ is a chain of symplectic surfaces with $C_i^2=-b_i\leq -2$, $i=1,\ldots, l$.
 Then there is a symplectic cyclic $4$-orbifold $\bar X$ with a singular point $p_0$, and a map $\pi:X\to \bar X$
 such that $\pi^{-1}(p_0)=\cC$, and $\pi:X-\cC \to \bar X-\{p_0\}$ is a symplectomorphism. Moreover
 if we write the continuous fraction $[b_1,\ldots, b_l]=\frac{d}{r}$, $\gcd(d,r)=1$, then the orbifold point $p_0$ is of the form
 $\CC^2/\ZZ_d$, where $\varepsilon\cdot (z_1,z_2)=(\varepsilon z_1,\varepsilon^r z_2)$, $\varepsilon=e^{2\pi i /d}$.
 \end{proposition}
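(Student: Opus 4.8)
The plan is to realize the orbifold $\bar X$ by a symplectic analogue of the classical blow-down of a chain of rational curves, keeping track of the local normal-crossing model at the point to which the chain collapses. The key invariant that governs the resulting cyclic quotient singularity is the continued fraction $[b_1,\ldots,b_l]=\frac{d}{r}$, which is exactly the datum controlling the Hirzebruch--Jung resolution of a cyclic singularity $\CC^2/\ZZ_d$ with weights $(1,r)$. So the strategy is: first recall (or construct) the symplectic neighbourhood of the chain $\cC$, then cut out this neighbourhood and glue in the standard symplectic cone over the lens space $L(d,r)=S^3/\ZZ_d$, recording that the inverse image of the cone point is precisely $\cC$.

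First I would analyze a tubular neighbourhood $\nu(\cC)$ of the chain. Since each $C_i\cong S^2$ with $C_i^2=-b_i$ and consecutive curves meet nicely (symplectically orthogonally, positively, in one point), a symplectic neighbourhood theorem for such a configuration of symplectic spheres gives a standard model: the boundary $\partial \nu(\cC)$ is the lens space $L(d,r)$, where $d,r$ are read off from the intersection matrix of the chain via the continued fraction expansion. This is the crucial classical computation — that the plumbing of spheres according to the linear graph with weights $-b_1,\ldots,-b_l$ has boundary $L(d,r)$ with $\frac{d}{r}=[b_1,\ldots,b_l]$ — and it matches exactly the link of the cyclic singularity $\CC^2/\ZZ_d$ under the action $\varepsilon\cdot(z_1,z_2)=(\varepsilon z_1,\varepsilon^r z_2)$. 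The chain $\cC$ is, up to the symplectic neighbourhood theorem, the minimal resolution of that singularity.

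Next I would perform the surgery. I remove the interior of $\nu(\cC)$ from $X$ and glue in the symplectic cone $c(L(d,r))=\big(\CC^2/\ZZ_d\big)$ along the identification of the contact boundary $L(d,r)$. Here I use that the cone $\CC^2/\ZZ_d$ carries a symplectic (indeed K\"ahler) orbifold structure, that its link is the contact manifold $L(d,r)$, and that this contact structure agrees with the one induced on $\partial \nu(\cC)$; one then applies a Gompf-type gluing (matching the symplectic/contact structures near the boundary) to produce a closed symplectic cyclic $4$-orbifold $\bar X$ with a single new orbifold point $p_0$ of type $\CC^2/\ZZ_d$. By construction $\pi:X\to\bar X$ collapses $\nu(\cC)$ to $p_0$ and is a symplectomorphism away from $\cC$, so $\pi^{-1}(p_0)=\cC$ and $\pi\colon X-\cC\to \bar X-\{p_0\}$ is a symplectomorphism.

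The final clause about a tail $D$ is then immediate from the local picture. If $D$ meets $\cC$ nicely at a single point on $C_1$ (or $C_l$), then in the Hirzebruch--Jung resolution $D$ corresponds to a curve meeting the end of the exceptional chain transversally, hence its image $\pi(D)$ passes through $p_0$ along one of the two coordinate axes of the local model $\CC^2/\ZZ_d$; after the gluing $\pi(D)$ is a symplectic suborbifold surface through $p_0$ which is nice in the sense of Section~\ref{subsec:orbifold}. I expect the main obstacle to be the symplectic (rather than merely topological) part: carefully matching the symplectic form and the induced contact structure across the gluing region so that the glued object is genuinely a symplectic cyclic orbifold with the stated local invariant, and verifying that the standard symplectic neighbourhood of a negative symplectic sphere chain has the asserted continued-fraction model. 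The purely topological identification of the plumbing boundary with $L(d,r)$ is classical; the work is in upgrading it symplectically, which is where a symplectic neighbourhood theorem for the chain and a compatible cone model do the heavy lifting.
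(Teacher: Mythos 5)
Your overall strategy is the same as the paper's---replace a neighbourhood of the chain by the model cyclic quotient singularity $\CC^2/\ZZ_d$, identified via the Hirzebruch--Jung continued fraction $[b_1,\ldots,b_l]=d/r$---but the step you yourself flag as ``the main obstacle'' is precisely where the real content lies, and your proposal does not close it. The difficulty is that there is \emph{no} ``standard model'' for a symplectic neighbourhood of the chain depending only on the genera, self-intersections and nice intersections: the symplectic neighbourhood theorem for chains (the paper cites \cite[Theorem 16]{Mu}) requires in addition that the \emph{areas} $\la[\omega],[C_i]\ra$ match those of the model. The paper's proof is mostly devoted to arranging exactly this: writing $[\omega]=\sum a_i[C_i]$ near the chain, showing all $a_i<0$ by a negative-definiteness argument, and then constructing a K\"ahler form on the resolution of a compactification $\CP^2/\ZZ_d$ realizing the prescribed areas $a_i$ on the exceptional curves, via the ampleness criterion ($T'=kH+\sum a_i[C_i']$ is K\"ahler for $k\gg 0$). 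Your proposal skips this entirely by asserting a standard model and then gluing; as written, the argument would fail at the point where you identify the neighbourhood of $\cC$ in $X$ with a neighbourhood of the exceptional chain in the resolution.

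Your alternative of cutting along the boundary lens space and gluing in the cone could in principle avoid the area-matching issue, because gluing along contact-type hypersurfaces does not require areas to match---but then the tool you need is not Gompf's symplectic sum (which glues along codimension-$2$ symplectic submanifolds, not along contact hypersurfaces). You would need (i) the theorem that a negative-definite plumbing of symplectic spheres admits arbitrarily small \emph{concave} neighbourhoods, and (ii) the identification of the induced contact structure on $\partial\nu(\cC)\cong L(d,r)$ with the link (Milnor fillable) contact structure of $\CC^2/\ZZ_d$, so that the cone can be glued in as a convex filling. Neither is invoked in your proposal, and neither follows from the symplectic neighbourhood theorem alone. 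Separately, the tail statement is not ``immediate'': in the paper it requires re-running the whole neighbourhood-matching construction for the configurations $(D\cap V)\cup\cC$ and $(D'\cap V')\cup\cC'$, where $D'$ is the proper transform of a coordinate axis, again with matching areas.
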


\begin{proof}
Write the continuous fraction
 $$
  \frac{d}{r}=[b_1,\ldots, b_l]=b_1- \frac{1}{b_2-\frac{1}{b_3- \ldots}}
  $$
and consider the action of the cyclic group $\ZZ_d$ on $\CC^2$ given by $(z_1,z_2)\mapsto (\varepsilon z_1,\varepsilon^r z_2)$, where
$\varepsilon=e^{2\pi i /d}$, $0<r<d$ and $\gcd(r,d)=1$.
By \cite[Lemma 15]{CMST}, the complex resolution $\varpi:X'\to \CC^2/\ZZ_d$ 
of $\CC^2/\ZZ_d$ has an exceptional divisor formed by a chain of 
smooth rational curves of self-intersections
$-b_1,-b_2,\ldots,-b_l$. Let $\cC'=C_1'\cup \ldots \cup C_l'$ denote the chain in the K\"ahler manifold $X'$.

In \cite[Theorem 16]{Mu}, it is proven a symplectic neighbourhood theorem for chains of length $l=2$, but it applies 
equally to chains of length $l\geq 2$. It says the following: suppose that $(X,\omega)$, $(X',\omega')$ denote the corresponding
symplectic forms, and suppose that $\la [\omega],  [C_i]\ra =\la [\omega'],[C_i']\ra$, for all $i=1,\ldots, l$ (that is, the
areas of the symplectic surfaces match). Assume also that $C_i^2=C_i'^2$, so that the normal bundles $\nu_{C_i}\cong \nu_{C_i'}$
are isomorphic. Then, there are tubular neighborhoods $\cC\subset U\subset X$ and
$\cC'\subset U'\subset X'$ which are symplectomorphic via $\varphi:U\to U'$, with $\varphi(C_i)=C_i'$, for all $i$.
Then we can take a small ball $B=B_\varepsilon(0)\subset \CC^2/\ZZ_d$ such that $V'=\varpi^{-1}(B)\subset U'$,
and let $V=\varphi^{-1}(V')$. Now we glue $X-\cC$ to $B$ to get a symplectic cyclic $4$-orbifold $\bar X$, with 
a map $\pi: X\to \bar X$ as required.

To arrange the condition on the areas, write $[\omega] =\sum a_i [C_i]$ for $a_i\in \RR$. 
Take $a_{i_0}$ the maximum of the $a_i$. If $a_{i_0}\geq 0$, then 
$\la[\omega],[C_{i_0}]\ra=-a_{i_0} b_{i_0} + a_{i_0-1}+ a_{i_0+1} \leq a_{i_0} (-b_{i_0}+2) \leq 0$, which
icannot occur since the symplectic area $\la [\omega], [C]\ra$ of a symplectic surface $C$ is always positive. 
Hence $a_{i_0}<0$ and therefore all $a_i<0$.
Next we compactify $\CC^2/\ZZ_d$ to $\CP^2/\ZZ_d$, by adding the line at infinity which is away from the 
orbifold point. Consider the resolution $\tilde X$ of $\CP^2/\ZZ_d$, and let $H$ be the hyperplane class. 
As this is projective, it has a K\"ahler class of the form $T=H+ \sum a'_i[C_i']$, with $a'_i<0$. 
The Nakai-Moishezon ampleness criterion
says that $T$ is ample if $T^2>0$ and $T\cdot C>0$ for every effective curve $C$. Hence for every
non-exceptional curve $C$, $H\cdot C\geq \sum (-a'_i) C'_i\cdot C$, and so $C'_i\cdot C\leq m \, H\cdot C$, for some $m>0$.
Now the class $T'=kH+ \sum a_i [C_i'] \in H^2(\tilde X,\RR)$ is a K\"ahler class for $k>0$ large, since 
$T'^2>0$ and $T'\cdot C>0$ for every non-exceptional curve $C$. For $C=C_i'$, $T'\cdot C'_i= \la [\omega], [C_i]\ra >0$.
Then there is a K\"ahler
form $\Omega$ on $\tilde X$ that restricts to a K\"ahler form $\omega'$ on $V'$ such that 
$[\omega']=\sum a_i[C_i']$, and so $\la [\omega'],[C_i']\ra=\la [\omega],[C_i]\ra$.
\end{proof}

Another tool that we need is to transform Lagrangian submanifolds  into symplectic ones.

\begin{lemma}[{\cite[Lemma 27]{MRT}}]\label{lemma:lagr-sympl} Let $(M,\omega)$ be a $4$-dimensional  compact symplectic manifold. 
Assume that  $[F_1],\ldots ,[F_k] \in H_2(M,\ZZ)$ are linearly independent homology classes represented by 
Lagrangian surfaces $F_1,\ldots F_k$ which intersect transversally, not three of them intersect in
a point, and the intersection pattern has no cycles. 
Then there is an arbitrarily $C^\infty$-small perturbation $\omega'$ of the symplectic form $\omega$ such that 
all $F_1,\ldots,F_k$ become symplectic.
\end{lemma}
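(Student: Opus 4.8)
The plan is to reduce the statement to the construction of a single closed $2$-form $\beta$ on $M$ whose pullback $\iota_{F_i}^*\beta$ to each $F_i$ is a positive area form for the chosen orientation. Granting such a $\beta$, put $\omega'=\omega+t\beta$. Since $\omega$ is symplectic and nondegeneracy is an open condition on $2$-forms, for $t>0$ small enough $\omega'$ is again symplectic on the compact manifold $M$ and is an arbitrarily small perturbation of $\omega$. As each $F_i$ is Lagrangian we have $\iota_{F_i}^*\omega=0$, hence $\iota_{F_i}^*\omega'=t\,\iota_{F_i}^*\beta>0$, so every $F_i$ becomes symplectic. Everything thus reduces to producing the closed form $\beta$.

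To build $\beta$ I would work from local models. Away from the intersection points a neighbourhood of $F_i$ is, by Weinstein's Lagrangian neighbourhood theorem, symplectomorphic to a neighbourhood of the zero section in $T^*F_i$; there $\pi_i^*\sigma_i$, with $\sigma_i$ a positive area form on $F_i$ and $\pi_i\colon T^*F_i\to F_i$ the projection, is closed and restricts to $\sigma_i>0$ along $F_i$. Near a point $p$ where two of the surfaces meet — by hypothesis at most two, since no three intersect in a point — choose Darboux coordinates $(x_1,x_2,y_1,y_2)$ with $\omega=dx_1\wedge dy_1+dx_2\wedge dy_2$ in which the two transverse Lagrangian planes are $\{y_1=y_2=0\}$ and $\{x_1=x_2=0\}$; then the constant-coefficient closed form $dx_1\wedge dx_2+dy_1\wedge dy_2$ restricts to a positive area form on each of them. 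A naive partition-of-unity sum $\sum_\alpha\rho_\alpha\beta_\alpha$ of such local pieces does restrict to a positive form on each $F_i$ (a positive combination of positive area forms), but its exterior derivative $\sum_\alpha d\rho_\alpha\wedge\beta_\alpha$ need not vanish, so closedness is lost; restoring it is the crux.

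This is where the linear independence of the $[F_i]$ enters. Since linear independence over $\ZZ$ forces independence in $H_2(M,\RR)$, Poincaré duality makes the pairing map $H^2(M,\RR)\to\bigoplus_i H^2(F_i,\RR)$, $\beta\mapsto(\int_{F_i}\beta)_i$, surjective; hence there is a globally \emph{closed} form $\beta_0$ with $\int_{F_i}\beta_0=\int_{F_i}\sigma_i$ for every $i$. For each $i$ the restriction $\iota_{F_i}^*\beta_0$ is then cohomologous on $F_i$ to the chosen $\sigma_i$, so their difference is exact, $\sigma_i-\iota_{F_i}^*\beta_0=d\tau_i$. Extending $\tau_i$ to a $1$-form $\tilde\tau_i$ on $M$ supported in the Weinstein neighbourhood of $F_i$ (via a cut-off of $\pi_i^*\tau_i$) and setting $\beta=\beta_0+\sum_i d\tilde\tau_i$ keeps $\beta$ closed while correcting each pullback, since $\iota_{F_i}^*\beta=\sigma_i+\sum_{j\neq i}\iota_{F_i}^*(d\tilde\tau_j)$.

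The main obstacle, which I would treat last, is exactly the cross-terms $\iota_{F_i}^*(d\tilde\tau_j)$: these are supported only where the neighbourhood of $F_j$ meets $F_i$, i.e.\ near the pairwise intersection points $F_i\cap F_j$, where the neighbourhoods unavoidably overlap. There I would carry out the correction using the symmetric two-plane Darboux model above, so that the combined contribution restricts to a positive form on both surfaces simultaneously, and shrink the supports (or rescale the $t_i$) so the off-diagonal contributions cannot overcome the positive diagonal term $\sigma_i$. The hypothesis that no three surfaces meet at a point is precisely what confines this local bookkeeping to the two-plane model, for which an explicit positive closed form exists. Once $\beta$ is assembled with $\iota_{F_i}^*\beta>0$ for all $i$, the reduction of the first paragraph completes the proof.
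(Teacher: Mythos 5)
The paper does not actually prove this lemma: it imports it verbatim from \cite[Lemma 27]{MRT}, so the comparison is with that cited argument. Your skeleton is the standard and correct one, and matches it: reduce to producing a closed $2$-form $\beta$ with $\iota_{F_i}^*\beta>0$ and set $\omega'=\omega+t\beta$ (valid, since nondegeneracy is open and $\iota_{F_i}^*\omega=0$); use linear independence, which indeed passes from $\ZZ$ to $\RR$, to find a closed $\beta_0$ with prescribed periods $\int_{F_i}\beta_0=\int_{F_i}\sigma_i$; correct each pullback by exact terms $d\tilde\tau_i$ supported in Weinstein neighbourhoods; and invoke the two-transverse-Lagrangian-planes Darboux model at the (at most double) intersection points.

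The genuine gap is precisely the step you defer to the end, and the mechanism you propose for it would fail. Writing $\tilde\tau_j=\rho_j\,\pi_j^*\tau_j$, the cross-term on $F_i$ is $\iota_{F_i}^*(d\tilde\tau_j)=\iota_{F_i}^*(d\rho_j)\wedge(\pi_j\circ\iota_{F_i})^*\tau_j+(\rho_j\circ\iota_{F_i})\,(\pi_j\circ\iota_{F_i})^*(d\tau_j)$. Shrinking the support of $\rho_j$ to width $\epsilon$ makes $|d\rho_j|$ grow like $1/\epsilon$, while $|\tau_j|$ near an intersection point $p\in F_i\cap F_j$ stays of size $|\tau_j(p)|$; so the first term blows up like $|\tau_j(p)|/\epsilon$ instead of becoming negligible. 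Nor can you rescale $d\tilde\tau_j$ by a small constant: its coefficient must be exactly $1$ for it to perform the correction on $F_j$ itself. Note also that $\iota_{F_i}^*(d\tilde\tau_j)$ is exact on the closed surface $F_i$, hence has zero total integral: whenever it is nonzero it has a genuinely negative part, and nothing in your setup dominates that part by $\sigma_i$. What actually closes the argument is exact vanishing of the cross-terms, not smallness. One first normalizes near each intersection point $p$: choose $\sigma_i$, $\sigma_j$, and (after an exact correction supported near $p$) $\beta_0$ to coincide with the constant model form there; then $d\tau_j=\sigma_j-\iota_{F_j}^*\beta_0$ vanishes on a disk around $p$, so the primitive $\tau_j$ can be chosen to vanish identically near $p$ (subtract $d(\chi f)$, where $\tau_j=df$ on that disk). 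Finally one shrinks $U_j$ so that $\pi_j(F_i\cap U_j)$ lies in the region where $\tau_j\equiv 0$ (equivalently, chooses the tubular fibration $\pi_j$ so that its fibre at $p$ contains the local sheet of $F_i$). Then every cross-term is identically zero and $\iota_{F_i}^*\beta=\sigma_i$ exactly. As written, your construction does not establish positivity of $\iota_{F_i}^*\beta$, which is the entire content of the lemma.
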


Note that if the Lagrangian $F_i$ intersects transversally a symplectic surface $S$, after the perturbation we will have
two symplectic surfaces intersecting transversally. With the given conditions, we can 
arrange the orientation of the homology classes $[F_i]$ suitably such
the intersections will be positive, and using Lemma \ref{lem:symplectic orthogonal}, we can make the intersection nice.

\section{Construction of a K-contact Smale-Barden manifold}\label{sec:k-cont}

We are going to start by constructing a simply connected symplectic cyclic $4$-orbifold with $b_2=b^+_2=3$,
and having $3$ symplectic surfaces which are disjoint and span $H_2(X,\QQ)$.

We take the rational elliptic surface $S$ with singular fibers $I_9+3A_1$, that
appears in  \cite[p.\ 568]{Beauville}. To construct $S$, take the pencil of cubic curves in $\CP^2$ with equation
$X^2Y+Y^2Z+Z^2X+tXYZ=0$. We blow-up twice at each of the three points $[1,0,0], [0,1,0],[0,0,1]$, which are
the nodes of the singular curve of the pencil $XYZ=0$. This produces 
a cycle of $9$ curves with another curve intersecting three of them (the image of the smooth curve of
the pencil $X^2Y+Y^2Z+Z^2X=0$), see Figure \ref{fig1}. We blow-up the
three intersections points to get the desired elliptic fibration. There is a cycle of $9$ rational $(-2)$-curves 
$C_1,\ldots, C_9$, and three sections $\s_1,\s_2,\s_3$ with $\s_j$ intersecting $C_{3j+1}$, $j=1,2,3$.
The sections $\s_j$ are rational $(-1)$-curves.
The three nodal curves are given by the values of $t=-3, 3 e^{\pi i/3},-3 e^{\pi i/3}$.

\begin{figure}[H]
\begin{center}
\includegraphics[width=14cm]{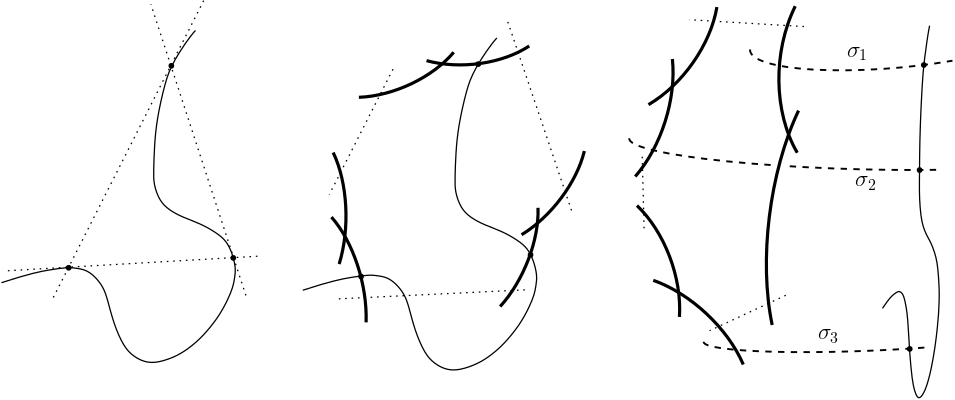}
\caption{\label{fig1} Construction of the rational elliptic surface $S$ with singular fibers $I_9+3A_1$.}
\end{center}
\end{figure}

 Next, let $F$ be a smooth fiber of the elliptic fibration $S\to \CP^1$ obtained from the cubic pencil
 after the blow-ups, and take an isomorphism $H_1(F,\ZZ) \cong \ZZ^2$. The
 monodromy of the fibration, which appears listed in No.\ 63 of \cite[Table 3]{Fukae}, is described by the
 equatlity 
 $CX_{[1,-2]}X_{[2,-1]} A^9 = \mathrm{I}$  (the reverse order is due to the fact that we are understanding the
 matrices as composition of endomorphisms). The notation is
 $X_{[p,q]}=\left(\begin{array}{cc} 1+pq & -p^2 \\ q^2 & 1-pq\end{array}\right)$, and
$A=X_{[1,0]}$, 
 $C=X_{[1,1]}$. 
So the monodromy representation is written as
  $$
  \left(\begin{array}{cc} 2 & -1 \\ 1 & 0\end{array}\right) 
  \left(\begin{array}{cc} -1& -1 \\ 4 & 3\end{array}\right)
  \left(\begin{array}{cc} -1 & -4 \\ 1 & 3\end{array}\right)
  \left(\begin{array}{cc} 1 & -1 \\ 0 & 1\end{array}\right)^9
 = \mathrm{I}.
 $$
The vanishing cycle of $X_{[p,q]}$ is $(p,q)$, with the choice of path to the critical point
taken in \cite{Fukae}. Therefore monodromies corresponding to going around the nodal curves
are $C=X_{[1,1]}$, $X_{[1,-2]}$ and $X_{[2,-1]}$, whence the vanishing cycles are $(1,1)$, $(1,-2)$ and $(2,-1)$.

 We take two copies $S_1,S_2$ of $S$ as above, with two smooth fibers $F_1,F_2$. They have $F_1^2=F_2^2=0$.
 We choose a symplectomorphism $\varphi:F_1 \to F_2$ such that the vanishing cycles match,
 that is, the identity in homology $\varphi_*:H_2(F_1,\ZZ) \to H_2(F_2,\ZZ)$. 
Take the Gompf symplectic sum 
 $$
 X=S_1 \#_{F_1= F_2} S_2 = (S_1-\nu(F_1))\cup_{\nu(F_1)-F_1\cong\nu(F_2)-F_2} (S_2-\nu(F_2))\, .
 $$
As $b_2(S_1)=b_2(S_2)=10$, then $\chi(S_1)=\chi(S_2)=12$. So $\chi(X)=24$ and hence $b_2(X)=22$.

Let $C_1,\ldots, C_9$ be the $I_9$-cycle of $S_1$, with sections $\s_1,\s_2,\s_3$ as before.
Analogously, let $C'_1,\ldots, C'_9$ be the $I_9$-cycle of $S_2$, with sections $\s'_1,\s'_2,\s'_3$ as before.
By using \cite[Lemma 24]{MRT}, we can glue the sections to produce symplectic surfaces $E_1,E_2,E_3$
of square $(-2)$.

\begin{lemma}\label{lem:s-c}
$X$ is simply connected.
\end{lemma}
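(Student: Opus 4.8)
The plan is to apply the Seifert--van Kampen theorem to the Gompf sum decomposition
$X=(S_1-\nu(F_1))\cup(S_2-\nu(F_2))$. Since the fiber $F$ has self-intersection $0$, its normal bundle is trivial, so the gluing region $\nu(F_1)-F_1\cong\nu(F_2)-F_2$ is homotopy equivalent to the connected $3$-torus $T^3=\partial\nu(F)\cong F\times S^1$. Hence van Kampen presents $\pi_1(X)$ as the pushout of $\pi_1(S_1-\nu(F_1))\leftarrow\pi_1(T^3)\rightarrow\pi_1(S_2-\nu(F_2))$. My strategy is to show that each piece $S_i-\nu(F_i)$ is already simply connected; then $\pi_1(X)$ is the pushout of two trivial groups, which is trivial regardless of the amalgamating subgroup. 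A pleasant consequence is that the precise effect of the gluing symplectomorphism $\varphi$ on $\pi_1(F)$ plays no role, so I need not track $\varphi_*$ at all.

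To compute $\pi_1(S-\nu(F))$ for the elliptic surface $S\to\CP^1$, I would delete a small disk $D\subset\CP^1$ around the base point of $F$ and restrict the fibration to the contractible disk $\Delta=\CP^1-D$. Then $S-\nu(F)$ is the total space of an elliptic Lefschetz fibration over $\Delta$ carrying all the singular fibers $I_9+3A_1$, and such a total space is obtained from $F\times\Delta$ by attaching a $2$-handle along each vanishing cycle. This gives $\pi_1(S-\nu(F))\cong\pi_1(F)/\langle\,\text{vanishing cycles}\,\rangle=\ZZ^2/\langle c\rangle$, where $c$ runs over the vanishing cycles (the $I_9$ fiber being smoothed locally into nine nodal fibers with a common vanishing cycle). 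The meridian of $F$ requires no separate treatment: the section $\sigma$ restricted to $\Delta$ is an embedded disk in $S-\nu(F)$ whose boundary is the meridian, so the meridian is automatically nullhomotopic.

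It then remains to verify that the vanishing cycles generate $\pi_1(F)=\ZZ^2$. Reading off the monodromy $CX_{[1,-2]}X_{[2,-1]}A^9=\mathrm{I}$, the vanishing cycle of $A=X_{[1,0]}$ (the $I_9$ fiber) is $(1,0)$, while the three nodal fibers contribute $(1,1)$, $(1,-2)$, $(2,-1)$. Since $(1,0)$ and $(1,1)$ have determinant $1$, they already span $\ZZ^2$, so $\pi_1(S_i-\nu(F_i))=1$ and the van Kampen argument concludes that $\pi_1(X)=1$. The step I expect to be the main obstacle is the clean identification $\pi_1(S-\nu(F))\cong\pi_1(F)/\langle\,\text{vanishing cycles}\,\rangle$ together with the justification that the section kills the meridian; once this structural fact is in place, the generation of $\ZZ^2$ by the listed vanishing cycles is a one-line arithmetic check.
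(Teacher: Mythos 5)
Your proposal is correct, and while it shares the paper's van Kampen skeleton --- both arguments reduce to showing that each piece $S_i-\nu(F_i)$ is simply connected, after which the pushout over $\pi_1(T^3)$ is trivial regardless of what the gluing symplectomorphism does on $\pi_1$ --- it establishes that key claim by a genuinely different method. The paper's proof is soft and three lines long: since $S_1$ is simply connected, $\pi_1(S_1-\nu(F_1))$ is normally generated by a meridian of $F_1$, and that meridian bounds the disk obtained by restricting a section $\s_j$ of the elliptic fibration to the complement; no monodromy data enters at all. You instead invoke the handle-theoretic structure of a Lefschetz fibration over the disk $\Delta$: the total space deformation retracts onto a regular fiber with thimbles (2-cells) attached along the vanishing cycles, so $\pi_1(S-\nu(F))\cong\pi_1(F)/N(\text{vanishing cycles})$, and you then check the arithmetic fact that $(1,0)$ and $(1,1)$ already generate $\ZZ^2$. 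Your route costs more machinery --- in particular you must justify that the $I_9$ fiber contributes nine parallel copies of the single vanishing cycle $(1,0)$, a standard but not free perturbation fact --- but it buys independence from the hypothesis $\pi_1(S)=1$, and the vanishing-cycle bookkeeping you set up is exactly what the paper needs later in Theorem \ref{thm:pi1(M)}, where the same cycles $(1,1)$, $(1,-2)$, $(2,-1)$ are used to kill generators of $\pi_1^{\orb}(X')$. One small criticism: your sentence about the section capping the meridian is redundant in your own framework, since in the formula $\pi_1(S-\nu(F))\cong\pi_1(F)/N(\text{vanishing cycles})$ the inclusion of the fiber is already $\pi_1$-surjective and no meridian generator ever appears; amusingly, that redundant remark --- the meridian dies on a section --- combined with normal generation of $\pi_1(S_1-\nu(F_1))$ by the meridian, is precisely the paper's entire proof.
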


\begin{proof}
$S_1$ is simply connected, hence $\pi_1(S_1-\nu(F_1))$ is generated by a loop around $F_1$. But this is contracted by using
one of the sections of the elliptic fibration. Hence  $\pi_1(S_1-\nu(F_1))=1$. Also $\pi_1(S_2-\nu(F_2))=1$, so $\pi_1(X)=1$.
\end{proof}

 Fix a fiber $F\subset \bd \nu(F_1)=\bd \nu(F_2)\subset X$. 
 Take the vanishing cycle $a=(1,1)$ in $F$, and the two vanishing thimbles $D_1,D_2$ in $S_1-\nu(F_1), S_2-\nu(F_2)$, respectively.
We glue them to form a Lagrangian $(-2)$-sphere $D$. Next take as dual curve in $F$ the curve
$b=(1,-2)$, intersecting $a$ transversally and positively at three points. This follows since in $H_1(F_0,\ZZ)$, we have
$\la a,b\ra=\la (1,1),(1,-2)\ra =3$, since the intersection form is antisymmetric. Take the torus $T=b\x S^1 \subset
F\x S^1=\bd \nu(F_1)=\bd \nu(F_2)$. This produces a pair of surfaces $D,T$ with 
  \begin{equation}\label{eqn:corr01}
   D^2=-2, \, D\cdot T=3, \,  T^2=0,
   \end{equation}
where $D$ is a Lagrangian sphere and $T$ is a Lagrangian torus.

With the second vanishing cycle $b=(1,-2)$, we do the same thing using another fiber $F'
\subset \bd \nu(F_1)=\bd \nu(F_2)\subset X$. We obtain a pair $D',T'$ of Lagrangian $(-2)$-sphere and Lagrangian
torus, disjoint from $D,T$. To arrange this, first move the $S^1$-factor in $\bd \nu (F_1)= F_1\x S^1$, which
maps to $S^1\subset \CP^1$ around the point giving the fiber $F_1$, in outward direction to make it disjoint. This
produces that $T,T'$ are disjoint. 
Second, note that the vanishing thimbles $D,D'$ map to paths from the point defining the fiber to the point of the nodal 
fiber, and these can be made disjoint; third, to avoid the possible intersections $D\cap T', D'\cap T$ we
use the dual curve to one of the vanishing cycle, which is equal to the other vanishing cycle, and move these loops in
a parallel direction along $F$.
Finally we use Lemma \ref{lemma:lagr-sympl} to change slightly the symplectic form so that $D,D'$ become
symplectic $(-2)$-surfaces, and $T,T'$ become symplectic tori.

Next, we see that there is a chain of $17$ rational $(-2)$-curves
 $$
  \cC= C_8\cup  \ldots\cup  C_2\cup C_1\cup E_1 \cup  C_1'\cup  C_2'\cup  \ldots\cup  C_8'\, ,
  $$  
where $E_1$ is defined before Lemma \ref{lem:s-c} (see Figure \ref{fig2}).
 We contract $D$ and $D'$ to two points $p,p'$ of multiplicity $2$. Using Proposition \ref{prop:blow-down},
 we contract the chain $\cC$ to a
 point $q$ of multiplicity $18$. Note that $[2,\stackrel{(17)}{\ldots}, 2]=\frac{18}{17}$, so the point
 has local model $(z_1,z_2)\mapsto (\varepsilon z_1,\varepsilon^{-1}z_2)$, with $\varepsilon=e^{2\pi i/18}$.
 Denote $\bar X$ the resulting symplectic cyclic orbifold with singular set $P=\{p,p',q\}$. It has
 $b_2(\bar X)=22-2-17=3$, and it is simply connected.

\begin{figure}[H]
\begin{center}
\includegraphics[width=12cm]{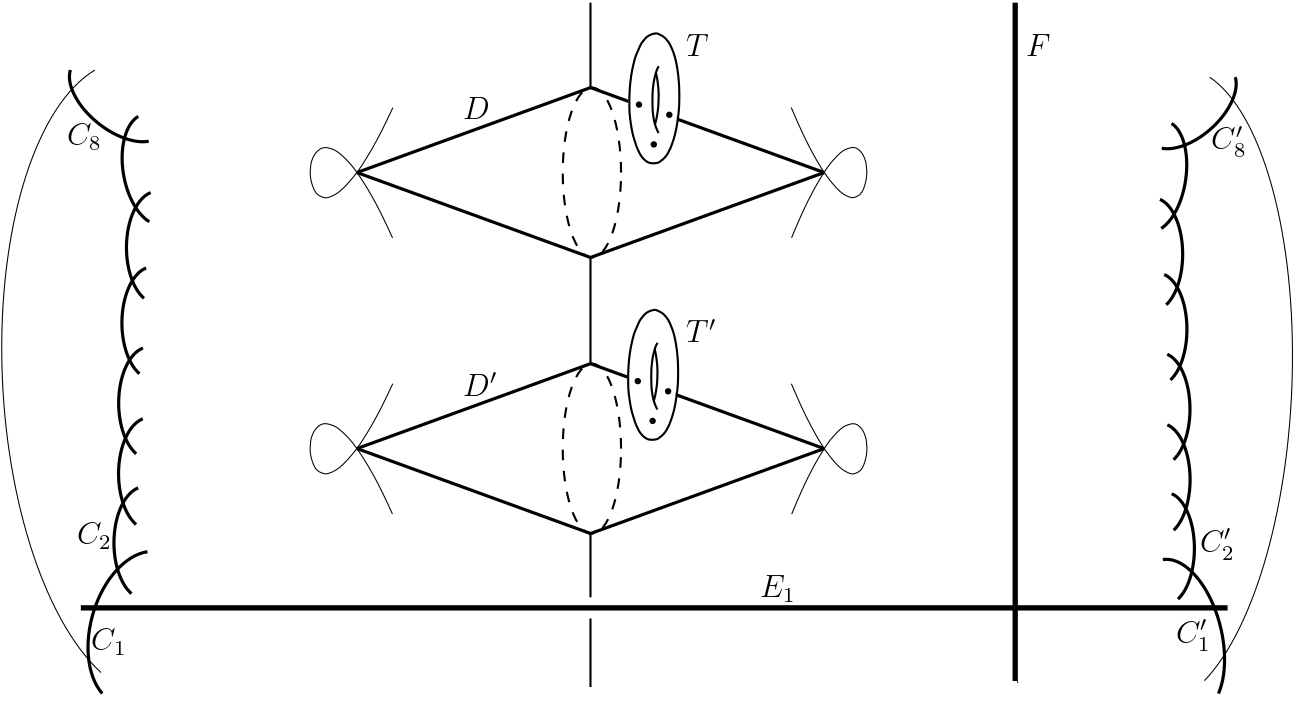}
\caption{\label{fig2} The $4$-manifold $X$ and all the surfaces constructed along the text.}
\end{center}
\end{figure}

  \begin{proposition}\label{prop:theTn}
  There is a collection of smooth symplectic surfaces $T_n$, $n\geq 1$, in a neigbourhood of $T\cup D$,  of genus $g_n=9n^2+1$,
  not intersecting $D$, and such that all the $T_n$ 
  intersect pairwise nicely.
  \end{proposition}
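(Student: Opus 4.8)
The plan is to realise, inside a fixed neighbourhood $W$ of $T\cup D$, the primitive class $\alpha=2[T]+[D]$ and all its positive multiples $n\alpha$ by symplectic surfaces that are disjoint from $D$. On $W$ we have $H_2(W,\ZZ)=\la [T],[D]\ra$ with $[T]^2=0$, $[D]^2=-2$ and $[T]\cdot[D]=1$. A class $a[T]+b[D]$ is orthogonal to $[D]$ exactly when $a-2b=0$, so $\alpha=2[T]+[D]$ generates $\{[D]\}^\perp$: multiples of $\alpha$ are the only classes that can possibly be represented by surfaces disjoint from $D$. Moreover $\alpha^2=4[T]^2+4[T]\cdot[D]+[D]^2=2>0$, and it is this positivity that makes arbitrarily high multiples available.

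\emph{Construction of $T_1$.} Since $[T]^2=0$ the normal bundle of $T$ is trivial, so I would take two disjoint parallel copies $T^{(1)},T^{(2)}$ of $T$, each meeting $D$ transversally in a single point $x_1,x_2\in D$. Deleting a small normal disc of $T^{(i)}$ around $x_i$ and joining the two resulting boundary circles by a thin tube carried by a push-off (inside a collar of $D$) of an arc in $D$ from $x_1$ to $x_2$, chosen to avoid $D$ itself, produces a connected surface $T_1$ of genus $g(T^{(1)})+g(T^{(2)})=2$ which is disjoint from $D$. Its $[T]$-coefficient is $2$ and $[T_1]\cdot[D]=0$, so necessarily $[T_1]=\alpha=2[T]+[D]$ and $[T_1]^2=2$. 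The delicate point, and the main obstacle of the whole proposition, is to carry out this joining \emph{symplectically}: the connecting tube must be made into a symplectic annulus while staying off $D$. I would arrange this from the local symplectic model of a neighbourhood of the symplectic $(-2)$-sphere $D$ (whose boundary is of contact type), choosing the tube inside the collar so that it is symplectic and matches the symplectic normal discs of the $T^{(i)}$; one then makes the intersections nice by Lemma \ref{lem:symplectic orthogonal}.

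\emph{Construction of $T_n$.} For $n\ge 2$ I would take $n$ copies of $T_1$ in general position inside $W$. As $[T_1]^2=2>0$ these copies cannot be made disjoint; after a small perturbation they meet pairwise in exactly two points, all positive and with no triple points, and by Lemma \ref{lem:symplectic orthogonal} those intersections can be made nice. Let $T_n$ be the symplectic resolution (simultaneous smoothing of all double points) of this configuration. It is connected, lies in class $n\alpha$, is disjoint from $D$ because each copy is, and its genus is given by the standard arithmetic-genus count for a symplectic resolution,
$$
g(T_n)=\sum_{i=1}^{n} g(T_1)+\delta-n+1 = 2n+2\binom{n}{2}-n+1 = n^2+1,
$$
where $\delta=\binom{n}{2}\,[T_1]^2=n(n-1)$ is the total number of double points. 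This matches $g_n=n^2+1$.

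Finally, to have $T=T_0,T_1,T_2,\ldots$ all present with pairwise nice intersections, I would fix once and for all a sufficiently large family of copies of $T_1$ together with $T$, all in general position with distinct double points, and build the different $T_n$ from pairwise \emph{disjoint} sub-families, so that each resolution is performed in its own disjoint collection of balls. Then $[T_n]\cdot[T_m]=2nm$ and $[T]\cdot[T_n]=n$ are all positive, the intersections are transverse and positive away from the resolved regions, and a further application of Lemma \ref{lem:symplectic orthogonal} makes every intersection nice while keeping every $T_n$ disjoint from $D$. I expect the only genuinely non-routine step to be the symplectic tubing in the construction of $T_1$; the rest is general position combined with the symplectic resolution and the niceness lemma.
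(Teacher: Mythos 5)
Your reduction to the class $\alpha=2[T]+[D]$ and your treatment of the case $n\geq 2$ (smoothing $n$ transverse copies of $T_1$, with the genus count $g_n=n^2+1$) agree with the paper. But the construction of $T_1$ itself --- which you correctly identify as the crux --- has a genuine gap, and in fact the surface you describe cannot exist as a symplectic surface. Your $T_1$ is built from two parallel copies of $T$ (each disjoint from $T$, since $\nu_T$ is trivial) tubed together along a push-off of an arc in $D$; the tube avoids $D$ by construction, and nothing in it meets $T$. So your $T_1$ is disjoint from \emph{both} $T$ and $D$. By your own observation $H_2(W,\ZZ)=\ZZ[T]\oplus\ZZ[D]$ with unimodular intersection matrix $\left(\begin{smallmatrix} 0 & 1\\ 1 & -2 \end{smallmatrix}\right)$, so a class orthogonal to both generators vanishes: $[T_1]=0$ in $H_2(W,\ZZ)$, not $\alpha$. (The claim that ``the $[T]$-coefficient is $2$'' is unjustified: the coefficient $a$ in $a[T]+b[D]$ equals $[T_1]\cdot[D]+2\,[T_1]\cdot[T]$, which here is $0$; it is not read off from the number of copies of $T$ used.) A null-homologous closed surface satisfies $\int_{T_1}\omega=0$, so it can never be symplectic --- the ``delicate point'' of making the tube symplectic is not delicate but impossible. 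The concrete obstruction is orientation: the tube over the arc, capped off by the two normal discs at $x_1,x_2$, bounds a solid tube, so its two caps meet $D$ with \emph{opposite} signs; since $T^{(1)}$ and $T^{(2)}$ both meet $D$ \emph{positively}, the tube cannot be glued compatibly with the symplectic orientations of the two sheets.

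The missing idea is that $T_1$ must contain a push-off of $D$ itself and must meet $T$ once (as $\alpha\cdot[T]=1$ forces, and as the paper explicitly records via $T_1\cdot T=1$). The paper achieves this by passing to a holomorphic local model and taking the graph of a meromorphic section of $\nu_D=\cO_D(-2)$ with two simple poles near $q_0=T\cap D$: since the degree is $-2$, a section with two poles has no zeros, so its graph is a copy of $D$ disjoint from $D$; near each pole the graph escapes along the normal fibres and is glued (via the standard node-smoothing model, pushed down by a cut-off function) to a parallel copy of $T$. Equivalently, $T_1$ is the symplectic smoothing of the nodal configuration $T^{(1)}\cup D\cup T^{(2)}$, which lies in class $2[T]+[D]$, has genus $1+1+0+2-3+1=2$, satisfies $T_1\cdot T=1$, and avoids $D$. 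With this corrected $T_1$, your step for $n\geq 2$ coincides with the paper's and goes through.
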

  
  \begin{proof}
Let $K$ be the canonical class of the symplectic form. 
Note that $K\cdot T=0$, $K\cdot D=0$, hence $K\cdot (aD+bT)=0$, for any $a,b\geq 0$.
  
We start constructing a curve $T_1 \equiv 2T+3D$ as follows. By the symplectic neighbourhood
theorem (Proposition \ref{prop:blow-down}), we can assume that we have a holomorphic model
consisting of complex curves $D,T$ in a complex surface. Let $q_1,q_2,q_3$ be the points of
$T\cap D$. We arrange two parallel copies of $T$, say $T',T''$,
which intersect transversely $D$ at six points $q_1',q_2',q_3', q_1'',q_2'',q_3''$, where 
$q_i',q_i'' \in D$ are close to $q_i$, for $i=1,2,3$.
 Take the normal bundle to $D=\CP^1$, which is 
  $\cO_D(-2)$. Take three meromorphic sections $\sigma_1,\sigma_2,\sigma_3$ of $\cO_D(-2)$, where
  $\sigma_i$ has poles at the points $q_i',q_i''$. At each of the six points we do as follows, we do it with
  $q_1'$ for concreteness. Take an adapted chart
  $(z,w)$ around $q_1'$, where $T'=\{z=0\}$, $D=\{w=0\}$. Around $q_1'$ we can assume that $\sigma_1=1/z$. 
  We glue the graph 
  $\{(z,1/z)\}$ with the graph $\{(1/w ,w)\}$ in the normal bundle
  to $T'$ (which is trivial). We use a cut off function to push this graph down to the graph $T'=\{(0,w)\}$,
  as in \cite[Section 3.5]{CMRV}. 
  The result is
  a symplectic surface. This has some self-intersections, which come from the intersections of the sections
  $\sigma_1,\sigma_2,\sigma_3$. These can be resolved symplectically to get a smooth
  symplectic surface of genus $g_1=10$, as in \cite[Section 5.1]{MRT} (basically changing the model $xy=0$ by
  $xy=\epsilon$). Note that 
  $2g_1-2=T_1^2=(2T+3D)^2=12\cdot 3+9\cdot(-2)=18$.
  The surface $T_1$ does not intersect $D$. Note that $T_1\cdot D=(2T+3D)\cdot D=6-6=0$.
  
For given $n\geq 2$, take a collection of symplectic surfaces $\Sigma_1,\ldots,\Sigma_n$ as graphs
in the normal bundle to $T_1$, and all intersecting transversally
and positively. Using \cite[Section 5.1]{MRT}, we can glue symplectically the $\Sigma_i$, $1\leq i \leq n$, at the intersection points
$\Sigma_i\cap \Sigma_j$ to obtain a symplectic surface $T_n\equiv \Sigma_1+\ldots+\Sigma_n \equiv n T_1$. Then 
$T_n^2=18n^2$ and the genus $g_n=9n^2+1$ satisfies $2g_n-2=18n^2$ since $K\cdot T_n=0$. 
 Moreover, if we have different curves $T_n$, all can be taken to intersect transversally, and
  after perturbation as in Lemma \ref{lem:symplectic orthogonal}, the intersections can be arranged to be nice.
 \end{proof}

   \begin{proposition}\label{prop:theA}
 Let $F$ be a fiber of the fibration that intersects the chain $\cC$ transversally at a point of $E_1$. Consider
the configuration of symplectic surfaces $\cC\cup F$. Then there 
is a symplectic surface $A$ of genus $g_A=10$, in a neighbourhood of $\cC \cup F$, not intersecting the chain.
  \end{proposition}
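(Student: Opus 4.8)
The plan is first to pin down the homology class that $A$ must represent, and then to produce an embedded symplectic representative supported near $\cC\cup F$ by generalising the pushing-off construction of Proposition~\ref{prop:theTn}.

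\textbf{Step 1: the class and the genus.} Since $A$ is to be disjoint from every curve of the chain, we need $A\cdot G=0$ for each component $G$ of $\cC$. Write the seventeen components in order as $G_1,\ldots,G_{17}$, so that $G_9=E_1$, each $G_i^2=-2$, $G_i\cdot G_{i+1}=1$, and $F\cdot G_i=\delta_{i,9}$, $F^2=0$. Looking for a class $A=2[F]+\sum_i m_i[G_i]$, the conditions $A\cdot G_j=0$ read $m_{j+1}-2m_j+m_{j-1}=0$ for $j\neq 9$ and $m_{10}-2m_9+m_8=-2$ (with $m_0=m_{18}=0$). Hence $m_i$ is linear on each arm of the chain, and one finds the tent $(m_1,\ldots,m_{17})=(1,2,\ldots,8,9,8,\ldots,2,1)$ peaking at $E_1$; these are twice the ninth column of the inverse Cartan matrix of the chain, and the coefficient $2$ of $F$ is exactly what is needed to make them integral. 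In fact $A=2\,\pi^*[\bar F]$ is the pullback of the fiber under the contraction $\pi:X\to\bar X$ of $\cC$ to the $\CC^2/\ZZ_{18}$ point, which is the conceptual reason that $A\cdot G_i=0$. Then $A^2=2(A\cdot F)=2\cdot 9=18$, and since $K\cdot F=0$ and $K\cdot G_i=0$ (adjunction on the torus fiber and on the $(-2)$-spheres) we get $K\cdot A=0$, so the adjunction formula gives $2g_A-2=A^2+K\cdot A=18$, i.e.\ $g_A=10$.

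\textbf{Step 2: the symplectic representative.} To realise $A$ I would generalise the construction of $T_1$ in Proposition~\ref{prop:theTn} from a single $(-2)$-curve to the whole chain. By the symplectic neighbourhood theorem used in the proof of Proposition~\ref{prop:blow-down} (the holomorphic model of the resolution of $\CC^2/\ZZ_{18}$ from \cite[Lemma 15]{CMST}), I may assume a holomorphic model of $\cC\cup F$. I would start from two parallel copies of $F$, which meet $E_1=G_9$ in two points and are disjoint from the rest of the chain, and then work outward from $E_1$ along each arm, using meromorphic sections of the normal bundles $\nu_{G_i}=\cO(-2)$ with poles at the intersection points to glue in $m_i$ parallel copies of each $G_i$, exactly as in Proposition~\ref{prop:theTn} and \cite[Section 5.1]{MRT}, \cite[Section 3.5]{CMRV}. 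Because a section with only poles is disjoint from the zero section, and because the multiplicities $m_i$ are precisely those forcing $A\cdot G_i=0$, all the intersections with the chain can be absorbed, yielding an embedded symplectic surface disjoint from $\cC$ and supported in the chosen neighbourhood. Its class is $A$, so by Step~1 its genus is $10$.

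\textbf{Main obstacle.} The hard part is Step~2: carrying out the iterated gluing so that the result is a single smoothly embedded symplectic surface, genuinely disjoint from the entire chain and of the claimed class, while handling the rather large multiplicities (peak value $9$). The delicate bookkeeping lies in choosing where the poles and zeros of the meromorphic sections sit and matching the local gluing data at each node of the chain so that the copies of the $G_i$ and the two copies of $F$ assemble consistently into one connected surface; the genus itself, however, is then read off for free from adjunction as in Step~1.
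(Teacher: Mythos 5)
Your Step 1 coincides exactly with the paper's own computation: the paper solves the same linear system for disjointness from the chain, finds the tent coefficients $(1,2,\ldots,8,9,8,\ldots,2,1)$ with peak $9$ on $E_1$ and coefficient $2$ on $F$, and gets $A^2=18$, $K\cdot A=0$, hence $g_A=10$ by adjunction. The gap is in Step 2, and it is not the "delicate bookkeeping" you describe but an actual obstruction: the parallel-copy scheme cannot close up. In your scheme each copy of $G_i$ is the graph of a meromorphic section of $\nu_{G_i}\cong\cO(-2)$. To be disjoint from $G_i$ itself the section must have no zeros, hence exactly two poles counted with multiplicity; to be disjoint from each adjacent chain curve it must have a pole at each node of $G_i$ (a graph taking a finite value over the node meets the transverse axis there). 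So every copy of an interior chain curve has exactly one simple pole at each of its two nodes, and the hyperbola-type gluings at the node between $G_i$ and $G_{i+1}$ must pair pole-ends from the $G_i$ side bijectively with pole-ends from the $G_{i+1}$ side. This forces $m_i=m_{i+1}$, whereas the tent has $m_{i+1}=m_i+1$ along each arm, so the matching fails at every single node; moreover the two $F$-copies cross $E_1$ and these crossings have nothing to pair with, since the nine $E_1$-copies have both poles already committed to the two chain nodes. Allowing higher-order poles does not rescue this: an end of the form $z^2w=c$ is a single circle double-covering the transverse direction, so it must continue as a connected branched double section of the neighbouring normal bundle, not as two disjoint parallel copies. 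In other words, any surface in the class $A$ disjoint from $\cC$ is forced to be, near each $G_i$, an irreducible degree-$m_i$ multisection with branching, which is precisely what a union of disjoint graphs can never be.

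This is why the paper's construction takes a genuinely different route. It first contracts the chain (Proposition \ref{prop:blow-down}) and works in the chart $\CC^2/\ZZ_{18}$, where $\bar F=\pi(F)$ has the explicit equation $y^9-x^9=0$; it then perturbs the non-reduced curve $(y^9-x^9)^2=0$ to $(y^9-x^9)^2=\epsilon xy+\zeta$, which is $\ZZ_{18}$-equivariant, smooth, and misses the origin, so its preimage upstairs is a symplectic surface near $\cC\cup F$ disjoint from the chain, realizing exactly the multisection behaviour your graphs cannot. The remaining nontrivial step, which has no counterpart in your sketch, is to check that the boundary of this perturbed curve consists of exactly two circles, so that it can be capped off by two copies of $F$; the paper does this by expanding the two branches $y=x+a(x)$, with $a(x)=\pm\frac19\sqrt{\epsilon}\,x^{-7}+\cdots$, and verifying that the residual $\ZZ_2$-action preserves each circle rather than swapping them. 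Your adjunction argument for the genus is fine once such a curve exists, but the existence itself requires the paper's construction (or some other argument producing a connected multisection), not an iteration of the construction of Proposition~\ref{prop:theTn}.
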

  
  \begin{proof}
Take a cohomology class of the form:
 $$
 A\equiv 2F+ a_0 \sigma+a_1(C_1+C_1')+ a_2 (C_2+C_2')+ \ldots+ a_8(C_8+C_8').
 $$
to arrange that it is disjoint from the curves $E_1$ and $C_i,C_i'$, we need
$0=A\cdot E_1=2-2a_0+2a_1$, $0=A\cdot C_1=a_0-2a_1+a_2$, $0=A\cdot C_i= a_{i-1}-2a_i+a_{i+1}$, $2\leq i\leq 7$, and
 $0=A\cdot C_8=a_7-2a_8$, whose solution is $a_8=1,a_7=2,\ldots, a_1=8, a_0=9$.
Note that 
  $$
  A^2=4a_0+ \sum 4a_{k-1}a_{k} -2a_0^2- \sum 4a_k^2 =18,
  $$
hence $2g-2=K\cdot A+A^2=18$, so $g=10$.

To construct the curve $A$,
consider the push-down map $\pi: X\to \bar X$, which contracts $\cC$ to the singularity $q$. The image 
$A=\pi(A)$ should be a smooth symplectic curve avoiding the singular point. We denote by 
the same letter since it does not pass through the singular point.
We construct $A$ directly in $\bar X$. As noted before, the singularity $q$ is
cyclic of order $18$, and of type $\CC^2/(\varepsilon,\varepsilon^{-1})$, $\varepsilon=e^{2\pi i /18}$. 
As explained in \cite{Reid}, there are $18$ affine charts
covering the $17$ rational curves plus the coordinate axis $L_1=\{y=0\}$, $L_2=\{x=0\}$ (expressed in 
coordinates $(x,y)$). 
Each of these charts is centered at a point of intersection of two consecutive curves in the chain.
They are given by the coordinates:
 \begin{align*}
 &  (\xi_0,\eta_0)= (x^{18}, y/x^{17}), \, (\xi_1,\eta_1)= (x^{17}/y, y^2/x^{16}), \,   (\xi_2,\eta_2)= (x^{16}/y^2, y^3/x^{15}), \ldots \\
 & \ldots,  (\xi_8,\eta_8)= (x^{10}/y^{8}, y^{9}/x^{9})  ,\, (\xi_9,\eta_9)= (x^{9}/y^9, y^{10}/x^{8}), \ldots \\
 & \ldots,  (\xi_{16},\eta_{16})= (x^2/y^{16}, y^{17}/x) , \, (\xi_{17},\eta_{17})= (x/y^{17}, y^{18}) .
 \end{align*}  
 
The axis $L_1$ is $\eta_0=0$ (i.e.\ $y=0$). The $(i+1)$-th curve in the chain is defined by $\xi_i=0$, $i=0,\ldots, 16$. The
second axis $L_2$ is $\xi_{17}=0$ (i.e.\ $x=0$).
The curve $\bar F=\pi(F)$ passing through the mid-point of the $9$-th curve is given by the equation $\eta_8=1$, that is $y^9/x^9=1$. This is
equivalent to $y^9-x^9=0$. The curve $2\bar F$ is thus $(y^9-x^9)^2=0$, that we can perturb to a smooth curve as follows:
 \begin{equation}\label{eqn:perturb}
 (y^9-x^9)^2=\epsilon xy +\zeta,
 \end{equation}
with $0< \zeta \ll \epsilon \ll 1$, in the chart $\CC^2/\ZZ_{18}$. This avoids the singular point (the origin), and it is easily seen
to be smooth. It is $\ZZ_{18}$-equivariant, so it descends to a smooth curve. We have to glue it to two copies of $F$, 
therefore we have to see that the boundary (of the intersection of (\ref{eqn:perturb}) with a ball in the 
affine chart around the singular point) is a collection of two circles. In this way we obtain the curve $A$ sought for.

For proving that the boundary of (\ref{eqn:perturb}) consists of two circles, note that we can see this for $\zeta=0$, since
the extra perturbation will merely move slightly the boundary, and so will not change it topologically.
Note first that $y^9-x^9=0$ is a collection of $9$ lines, interchanged by $\ZZ_{18}$. Actually the image is the quotient
of $y-x=0$ by $(x,y)\mapsto (-x,-y)$, which is the remaining $\ZZ_2$-action. Its boundary is the circle $\{(x,x)| x=e^{it}\}/
(t\sim t+\pi)$. The equation $(y^9-x^9)^2=0$ has as boundary again the same circle, but with multiplicity two.
When we perturb $(y^9-x^9)^2=\epsilon xy$, the curve $(x,y=x)$ gets moved to $(x, y=x+a(x))$, and
we can compute easily a Taylor expansion
 $$
 a(x)= \pm \frac19 \sqrt{\epsilon} \, x^{-7} +\frac{1}{162}\epsilon\, x^{-15} + \ldots
 $$
As we see, there are two solutions depending on the leading term. This means that there are 
two circles in the boundary (the other option would have been a double valued function $a(x)$). As there are
only odd powers of $x$, the 
$\ZZ_2$-action $(x,y)\mapsto (-x,-y)$ goes down to $a\mapsto -a$, via $t\mapsto t+\pi$.
That is, it acts on each circle, and not swapping the circles. So in the quotient, there are two circles
remaining, as claimed.
\end{proof}

Take the push-down map $\pi: X\to \bar X$, which contracts $D,D'$ to singularities $p,p'$ and the
chain $\cC$ to the singularity $q$. Consider the collection of symplectic surfaces $T_n$, $1\leq n\leq N$, and
$T'_m$, $1\leq m\leq N$, and the symplectic surface $A=\pi(A)$. We shall fix a large $N>0$ later on.
None of the surfaces pass through singular points. We arrange all intersections to be nice, so that we
can assign coefficients to all surfaces and make $\bar X$ into a cyclic orbifold $X'$ by using Proposition \ref{prop:smooth->orb}.
We can assign local invariants by using Proposition \ref{prop:local-invar}. 

We take coefficients as follows. 
The genus of $T_n$ and $T_n'$ is $g_n=9n^2+1$, $n\geq 1$. 
For each $1\leq n,m\leq N$, take a prime $p_{nm}$. The collection of chosen primes should be different, $p_{nm}> 3$
and satisfy $p_{nm}>n,m$. We assign multiplicities as follows:
 \begin{align*}
  m_{T_n} &= \prod_{m=1}^N p_{nm}, \\
  m_{T_m'} &= \prod_{n=1}^N p_{nm}^2\, , \\
  m_{A} &= \prod_{n,m=1}^N p_{nm}^3\, .
 \end{align*}
Note that for $T_n,T_s$, $n\neq s$, which are intersecting surfaces, we have that
the primes $p_{nk},p_{sl}$ are different, hence
$\gcd(m_{T_n},m_{T_s})=1$. Analogously, 
$\gcd(m_{T_m'},m_{T_l'})=1$, for $m\neq l$. 
Also note that $\gcd(m_{T_n},m_{A})\neq 1$ and $\gcd(m_{T_m'},m_{A})\neq 1$, for all $n,m\geq 1$.
Also for any $n,m$, $p_{nm}$ divides $m_{T_n}$ and $m_{T_m'}$, hence
$\gcd(m_{T_n},m_{T_m'})\neq 1$. This is in accordance with the fact that the involved surfaces are disjoint.

\begin{theorem} \label{thm:K-contact}
 For any $N\geq 1$, there is a Seifert bundle $\pi:M\to X'$ which is K-contact and satisfies $H_1(M,\ZZ)=0$ and 
 $$
  H_2(M,\ZZ)=\ZZ^2 \oplus \bigoplus_{n,m=1}^N  \left(\ZZ_{p_{nm}}^{18n^2+2}
  \oplus \ZZ_{p_{nm}^2}^{18m^2+2} \oplus \ZZ_{p_{nm}^3}^{20}\right).
 $$
Moreover, $M$ is spin.
 \end{theorem}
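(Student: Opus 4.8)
The plan is to feed the configuration constructed above into the general machinery of Section~2. First I would produce the orbifold and the bundle. Apply Proposition~\ref{prop:smooth->orb} to the symplectic $4$-orbifold $\bar X$ (whose isotropy is the singular set $P=\{p,p',q\}$) together with the nicely intersecting symplectic surfaces $\{T_n,T_m',A\}$ and the multiplicities $m_{T_n},m_{T_m'},m_A$; the required coprimality $\gcd(m_i,m_j)=1$ for \emph{intersecting} surfaces has already been checked, while disjoint surfaces are unconstrained. This yields a symplectic cyclic orbifold $X'$ whose singular points acquire updated multiplicities $m_p=2m_{T_0}$, $m_{p'}=2m_{T_0'}$ and $m_q=18$. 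Since the only isotropy surfaces meeting $P$ are $T_0$ (through $p$) and $T_0'$ (through $p'$), and these are disjoint, Proposition~\ref{prop:local-invar} provides local invariants, with each $j_i$ chosen coprime to $m_i$. Lemma~\ref{lem:20MRT}(1) then gives a Seifert bundle $\pi:M\to X'$ with $c_1(M)=[\hat\omega]$ for an orbifold symplectic form, so by \cite[Theorem 7.1.3]{BG} (the base $X'$ being almost-K\"ahler) $M$ carries a K-contact structure. Choosing the $b_i$ (equivalently the $j_i$) so that $\sum_i \frac{b_i\mu}{m_i}[D_i]$ is primitive in $H^2(X'-P,\ZZ)$, and using $b_2(X')=3\ge 3$, Lemma~\ref{lem:20MRT}(2) arranges that $c_1(M/\ZZ_\mu)$ is primitive.

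Next I would read off $H_1$ and $H_2$ from Proposition~\ref{thm:16MRT} by verifying its three hypotheses. Condition~(1), $H_1(X',\ZZ)=H_1(\bar X,\ZZ)=0$, holds because $\bar X$ is simply connected; condition~(3) is the primitivity just arranged. Condition~(2) asks that $H^2(X',\ZZ)\to\bigoplus_i H^2(D_i,\ZZ_{m_i})=\bigoplus_i\ZZ_{m_i}$ be onto, where the $i$-th component of $\alpha$ is $(\alpha\cdot D_i)\bmod m_i$. Since the primes $p_{nm}$ are pairwise distinct, at each prime only the three surfaces $T_n,T_m',A$ carry multiplicity divisible by $p_{nm}$, and these three are mutually disjoint; as $D_i^2\in\{2n^2,2m^2,18\}$ and $p_{nm}>\max(5,n,m)$ forces $p_{nm}\nmid D_i^2$, suitable multiples of the dual classes $[A]$, $[T_m']$, $[T_n]$ hit each factor (using $T_0\cdot T_1=1$, resp.\ $T_0'\cdot T_1'=1$, to treat $T_0,T_0'$, whose self-intersection vanishes). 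Hence condition~(2) holds, and Proposition~\ref{thm:16MRT} gives $H_1(M,\ZZ)=0$ together with $H_2(M,\ZZ)=\ZZ^{k}\oplus\bigoplus_i\ZZ_{m_i}^{2g_i}$, $k=b_2(X')-1=2$. It then remains to match the torsion: inserting the genera $g(T_n)=n^2+1$, $g(T_m')=m^2+1$, $g(A)=10$ and expanding each $\ZZ_{m_i}$ by the Chinese Remainder Theorem (the primes being distinct) produces $\bigoplus_m\ZZ_{p_{nm}}^{2n^2+2}$ from $T_n$, $\bigoplus_n\ZZ_{p_{nm}^2}^{2m^2+2}$ from $T_m'$, and $\bigoplus_{n,m}\ZZ_{p_{nm}^3}^{20}$ from $A$; summing over all surfaces reproduces exactly the stated $H_2(M,\ZZ)$.

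The hard part is the spin assertion. The stable tangent bundle of the total space pulls back from the base, so $w_2(M)=\pi^* w_2^{\mathrm{orb}}(X')$, the mod $2$ reduction of the orbifold first Chern class. Because $H_1(M,\ZZ)=0$ and $H_2(M,\ZZ)$ has no $2$-torsion, $w_2(M)$ is detected on the rank-$2$ free part of $H_2(M,\ZZ)$, which is spanned by lifts $\tilde\gamma$ of surfaces $\gamma\subset\bar X$ satisfying $\langle c_1(M),\gamma\rangle=0$, and there $w_2(M)[\tilde\gamma]=\langle c_1^{\mathrm{orb}}(X'),\gamma\rangle \bmod 2$. Since the canonical class pairs trivially with $[T],[T'],[A]$ by Propositions~\ref{prop:theTn} and \ref{prop:theA}, and these classes span $H_2(\bar X,\QQ)$, the canonical part contributes nothing, leaving only the fractional corrections $\sum_i(1-\tfrac1{m_i})(D_i\cdot\gamma)$ from the isotropy surfaces together with the contributions of the singular points.

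The genuine obstacle lives exactly here: the singular points $p,p',q$ all have \emph{even} multiplicity ($2m_{T_0}$, $2m_{T_0'}$ and $18$), so they are the only possible source of a nonzero $w_2$. I would therefore use the explicit local invariants at $p,p',q$ to check that these even points do not contribute, and combine this with the fact that \emph{every} surface multiplicity $m_i$ is odd to conclude that each $\langle c_1^{\mathrm{orb}}(X'),\gamma\rangle$ is even. Granting this mod $2$ computation, $w_2(M)=0$ and $M$ is spin. Finally, since $\bar X$ is simply connected and $\pi_1^{\mathrm{orb}}(X')$ is abelian, the exact sequence of Section~2 upgrades $H_1(M,\ZZ)=0$ to $\pi_1(M)=1$, so $M$ is a bona fide Smale-Barden manifold.
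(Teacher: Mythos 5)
Your proposal follows the same skeleton as the paper: build $X'$ from Propositions \ref{prop:smooth->orb} and \ref{prop:local-invar}, extract a K-contact Seifert bundle from Lemma \ref{lem:20MRT}, verify the three hypotheses of Proposition \ref{thm:16MRT}, and expand the torsion by the Chinese Remainder Theorem. Your treatment of condition (2) is a correct minor variant of the paper's (the paper pairs against $T_1,T_1',A$, getting images $(n,0,0)$, $(0,m,0)$, $(0,0,18)$; you pair against $T_n,T_m',A$ themselves, with the same fix via $T_0\cdot T_1=1$ for the square-zero surfaces). However, your condition (3) contains a genuine gap: you write ``choosing the $b_i$ so that $\sum_i\frac{b_i\mu}{m_i}[D_i]$ is primitive,'' but the existence of such a choice is precisely the hypothesis of Lemma \ref{lem:20MRT}(2) and must be proved, not assumed --- the $[D_i]$ are not part of a basis and the $b_i$ are constrained to be invertible mod $m_i$. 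The paper's actual argument is: take $b=1$, so that (since $\mu=m_A$) the coefficient of $[A]$ in $x$ is $1$; then $\la x,[A]\ra=[A]^2=18$ shows that any divisor of $x$ is $2$ or $3$; next $\la x,[T_1]\ra=\frac{b_0\mu}{m_{T_0}}+2\sum\frac{b_n\mu n}{m_{T_n}}$ with $\mu$ odd, so choosing $b_0$ odd rules out $2$, and since $3\nmid\mu/m_{T_0}$ one can also fix $b_0$ mod $3$ to rule out $3$, the Chinese Remainder Theorem making all congruences on $b_0$ simultaneously satisfiable. Without some such computation, primitivity --- and hence the claim $H_1(M,\ZZ)=0$ --- is unverified.

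The second gap is the spin assertion, where you concede the point yourself (``Granting this mod $2$ computation''). Moreover, the obstacle you locate at the even-multiplicity points $p,p',q$ is not actually there: the paper quotes the formula
$w_2(M)=\pi^*w_2(\bar X-P)+\sum(m_i-1)\pi^{-1}(D_i)$,
which is a statement over $\bar X-P$; since $\pi^{-1}(P)\subset M$ is a disjoint union of circles, of codimension $4$, deleting it does not change $H^2(\cdot,\ZZ_2)$, so the singular points never enter the computation at all. Then all surface multiplicities $m_i$ are odd, which kills the correction term, and $K\cdot T=K\cdot T'=K\cdot A=0$ (from Propositions \ref{prop:theTn} and \ref{prop:theA}) together with the fact that $T,T',A$ span $H_2(\bar X,\QQ)$ gives $K=0$, hence $w_2(\bar X-P)=0$ and $w_2(M)=0$. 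So the local-invariant check your outline calls for is never needed, and in its place your argument as written proves nothing about $w_2$. (Your closing remark on $\pi_1(M)=1$ also presumes $\pi_1^{\orb}(X')$ abelian, which is the content of the paper's separate Theorem \ref{thm:pi1(M)}, but that is outside the statement under review.)
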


\begin{proof}
We need to check the conditions of Proposition \ref{thm:16MRT}. First 
clearly $H_1(\bar X,\ZZ)=0$ because $\pi_1(\bar X)=1$, by Lemma \ref{lem:s-c}. 
Second we have to see the surjectivity of the map
$H^2(\bar X,\ZZ)\to \oplus_i H^2(D_i,\ZZ_{m_i})$. For this, we look at every prime.
Let $p=p_{nm}$ and look at the map
 $$
 \varpi: H^2(\bar X,\ZZ)\to H^2(T_n,\ZZ_p)\oplus H^2(T_m',\ZZ_{p^2})\oplus H^2(A, \ZZ_{p^3}).
 $$
Recall that $T_n\equiv nT_1$, $T_m' \equiv mT'_1$, $A\equiv 2\bar F$ in $H_2(\bar X,\ZZ)$.
The image $\varpi(T_1) =(n T_1\cdot T_1,0,0)=(18n,0,0)$, $\varpi(T_1')=(0,mT'_1\cdot T_1',0)=(0,18m,0)$,
$\varpi(A)=(0,0,A^2)=(0,0,18)$, noting that $T_1,T_1',A\in H^2(\bar X,\ZZ)=H_2(\bar X-P,\ZZ)$.
Now $n,m$ are coprime with $p$ (since $p>n,m$) and $p\geq 5$ so that
$\gcd(p,18)=1$.

To proceed, we need to choose $c_1(M)\in H^2(\bar X,\QQ)$ so that it is a symplectic class, and also
$c_1(M/\ZZ_\mu)\in H^2(\bar X-P,\ZZ)$ is primitive. This follows from Lemma \ref{lem:20MRT} if
we can assure that 
 $$
  x=\mu \left(\sum \frac{b_n}{m_{T_n}}[T_n] +\sum \frac{b'_m}{m_{T_m'}}[T'_m] + \frac{b}{m_{A}} [A]\right) \in H^2(\bar X-P,\ZZ)
 $$
is primitive, where $b_n,b_n'$, $n\geq 1$ and $b$ are the correponding $b_i$ associated to the local invariants.
Note that $\mu=m_{A}=\prod_{n,m} p_{nm}^3$. If we choose $b=1$, then the coefficient of $[A]$ is $1$. 
Cupping with $[A]\in H_2(\bar X-P,\ZZ)$, we obtain $\la x,[A]\ra=[A]^2=18$. So the only possible divisors of
$x$ are $2$ or $3$. Now we note that $T_n\equiv nT_1$.  
Then the coefficient of $T_1$ in $x$ is
 $$
 \frac{b_1 \mu}{m_{T_1}} + \sum_{n\geq 2} \frac{b_n \mu n}{m_{T_n}}\, .
 $$
As $\mu$ is not divisible by $6$, if we choose $b_1=1$ and $b_n$ divisible by $6$ for $n\geq 2$, then this number 
is coprime with $6$. Then $x$ is not divisible by $2$ or $3$, as required.

By \cite[equation (14)]{Gompf}, the second Stiefel-Whitney class of $M$ is 
 $$
  w_2(M)=\pi^*w_2(\bar X-P) + \sum (m_i-1) \pi^{-1}(D_i)\, .
  $$
 As all $m_i$ are odd, then $w_2(M)=\pi^*w_2(\bar X-P)$. Note that $K\cdot T=0$, $K\cdot T'=0$, $K\cdot A=0$,
 hence $K=0$, and so $w_2(\bar X-P)=0$ hence $w_2(M)=0$. So $M$ is spin.
\end{proof}

Finally, we compute the fundamental group.

\begin{theorem} \label{thm:pi1(M)}
The orbifold fundamental group $\pi_1^{\orb}(X')=1$, and $\pi_1(M)=1$.
\end{theorem}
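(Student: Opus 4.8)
The plan is to prove the stronger statement $\pi_1^{\orb}(X')=1$, from which $\pi_1(M)=1$ follows immediately: by the exact sequence $\ZZ\to\pi_1(M)\to\pi_1^{\orb}(X')\to 1$ recorded before the definition of the orbifold fundamental group, together with $H_1(M,\ZZ)=0$ from Theorem \ref{thm:K-contact}, it already suffices to know that $\pi_1^{\orb}(X')$ is abelian, so triviality is more than enough. First I would set up the generators. Since the underlying space $\bar X$ is simply connected, the group $\pi_1^{\orb}(X')$ is \emph{normally} generated by the meridians $\gamma_i$ of the isotropy surfaces $T_n,T_m',A$ together with the links of the singular points $p,p',q$. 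Because the normal closure of a trivial element is trivial, it is enough to show that each of these generators equals $1$ in $\pi_1^{\orb}(X')$.

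Next I would kill the surface meridians. Each $\gamma_i$ already satisfies the orbifold relation $\gamma_i^{m_i}=1$, so it suffices to produce a second relation $\gamma_i^{k_i}=1$ with $\gcd(k_i,m_i)=1$. Such a relation comes from a transverse surface $S_i$ meeting $D_i$ in $k_i$ points and disjoint from the remaining isotropy surfaces: removing these points exhibits a punctured surface in $\bar X-(\cup D_j\cup P)$ whose boundary meridians multiply to a null-homotopic loop, forcing a product of conjugates of $\gamma_i$ to vanish. Crucially, the needed coprimalities are exactly those already verified in the proof of Theorem \ref{thm:K-contact}: the classes $T_1,T_1',A$ pair with $T_n,T_m',A$ to give the numbers $n,\,m,\,18$, and these are coprime to the relevant prime $p_{nm}$ since $p_{nm}>\max(n,m)$ and $\gcd(p_{nm},18)=1$. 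Reusing $T_1,T_1',A$ as the transverse witnesses $S_i$ yields the relations $k_i\gamma_i=1$ with $k_i\in\{n,m,18\}$; in particular the same computation shows $H_1^{\orb}(X')=0$, which is precisely the surjectivity of $H^2(\bar X,\ZZ)\to\bigoplus_i H^2(D_i,\ZZ_{m_i})$ used to check condition (2) of Proposition \ref{thm:16MRT}.

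Finally I would dispose of the singular-point links and conclude. The links of $p$ and $p'$ can be expressed through $\gamma_T$ and $\gamma_{T'}$, since $T$ and $T'$ are the isotropy surfaces passing through $p$ and $p'$; the link of $q$ bounds an orbifold disc inside the chart $\CC^2/\ZZ_{18}$ and is therefore trivial in $\pi_1^{\orb}(X')$. Once all the meridians and all the singular-point links vanish, the normal generation gives $\pi_1^{\orb}(X')=1$, and hence $\pi_1(M)=1$. I expect the main obstacle to be the passage from the \emph{homological} relations $k_i\gamma_i=0$ to genuine relations $\gamma_i^{k_i}=1$ in the nonabelian group $\pi_1^{\orb}(X')$: one must control the conjugation ambiguity in the punctured-sphere relations (equivalently, show that the meridians of the three mutually disjoint clusters $\{T_n\}$, $\{T_m'\}$, $\{A\}$ commute, after which $H_1^{\orb}(X')=0$ closes the argument). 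Extra care is required for the link of $q$, through which no isotropy surface passes, so that its triviality must be read off directly from the local $\ZZ_{18}$-model rather than from a surface meridian.
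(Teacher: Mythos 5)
Your high-level skeleton does match the paper's: prove $\pi_1^{\orb}(X')=1$, then get $\pi_1(M)=1$ from the exact sequence $\ZZ\to\pi_1(M)\to\pi_1^{\orb}(X')\to 1$ together with $H_1(M,\ZZ)=0$; and the idea of killing each meridian by pairing the orbifold relation $\gamma_i^{m_i}=1$ against a second relation with coprime exponent is also the paper's mechanism. But the step you flag as ``the main obstacle'' --- upgrading the punctured-surface relation (a product of \emph{conjugates} of $\gamma_i$ together with commutators of other loops) to the honest power relation $\gamma_i^{k_i}=1$ --- is not a deferrable technicality: it is essentially the entire content of the paper's proof, and your proposal contains no argument for it. The paper resolves it with input specific to this construction: for the meridian $\gamma$ of $A$, the push-off relation $\gamma^{18}=\prod_{j=1}^{10}[\alpha_j,\beta_j]$ collapses to $\gamma^{18}=1$ only because the genus loops of $A$ are identified concretely ($\alpha_1,\beta_1,\alpha_2,\beta_2$ with the fiber loops $\alpha,\beta$, which commute since the third vanishing cycle forces $\beta=\alpha^2$, and the remaining ones with powers of the link $b$ of $q$); for the cluster $\{T_n\}$ the paper does not kill meridians one at a time but derives a joint system --- commutativity $[c_j,c_k]=1$ from the transverse intersections, plus the relations obtained by pushing $D$ and $T$ off themselves --- and solves it by a Chinese-remainder argument using the pairwise coprimality of the $m_{T_n}$. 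Note also that your proposed witnesses are unusable as stated: $T_1$ is itself an isotropy surface and meets every $T_s$, $s\neq 1$, so the punctured surface it produces has boundary meridians of \emph{all} of those surfaces at once, not a clean relation in $\gamma_{T_n}$ alone; your hypothesis that the witness be ``disjoint from the remaining isotropy surfaces'' fails for every candidate in this construction.

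The second gap is your treatment of the link $b$ of the singular point $q$, which is simply incorrect. A small loop around an isolated cyclic orbifold point does not become trivial by ``bounding an orbifold disc'': the local orbifold fundamental group is $\pi_1^{\orb}(\CC^2/\ZZ_{18})=\ZZ_{18}$, so locally $b$ has order exactly $18$, and since no isotropy surface passes through $q$, no meridian relation ever touches $b$. Killing $b$ requires global relations, and this is a substantial part of the paper's argument: the fiber $F_0$ through the central curve $E_1$ of the contracted chain gives $b^9=[\alpha,\beta]=1$, and the two extra sections $E_2,E_3$ (usable only after $\gamma=1$ has been established) give $b^5=b^{13}$ and $b^2=b^{16}$, hence $b^8=1$, whence $b=1$. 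Your proposal has no substitute for this step, so even granting the coprimality bookkeeping, the argument cannot close.
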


\begin{proof}
  Recall that $X$ is simply connected by Lemma \ref{lem:s-c}, and that we contract the surfaces $D,D'$ and the chain $\cC$.
  The singular points of the orbifold $\bar X$ are $P=\{p,p',q\}$.  Then the fundamental group of 
   $$
   X^o=X-(D\cup D'\cup \cC)=\bar X-P
   $$
 is generated by loops around the singular points, that is $a,a'$ around $p,p'$, respectively, and
 $b$ around $q$. Note that $a^2=1,a'^2=1, b^{18}=1$.

First fix a smooth fiber $F_0$. 
Recall that the vanishing cycles in $F_0$ are $(1,1)$, $(1,-2)$, $(2,-1)$.
Let $\alpha,\beta \in \pi_1(F_0)$ be the standard generators of the torus $F_0$. 
The third vanishing cycle contracts without touching any of the curves, because the vanishing 
thimbles can be taken to be disjoint.
Hence $\alpha^2\beta^{-1}=1$ in $\pi_1(X^o)$, so $\beta=\alpha^2$
and the group generated by $\alpha,\beta$ is generated by $\alpha$ and it is abelian.

Next, take the surface $A$ which lies in a neighbourhood of $F\cup \cC$, and has genus $10$, and self-intersection
 $A^2=18$. Let $\alpha_1,\beta_1,\ldots, \alpha_{10},\beta_{10}$ be the loops 
 generating $\pi_1(A)$ and let $\gamma$ be a small loop around $A$, that is, a meridian. 
 We order the loops so that $\alpha_1,\beta_1,\alpha_2,\beta_2$
 homotop to $\alpha,\beta$ in the fiber $F$, and the other $\alpha_j,\beta_j$ are close to the singular point, so of
 the form $b^{k}$ for some $k$ (the value $k$ depending on the loop). Then
 $\gamma^{18}=\prod_{j=1}^{10} [\alpha_j,\beta_j]=1$. 
 Adding the relation $\gamma^{m_{A}}$, and recalling that $\gcd(m_{A},18)=1$ (since we
 have chosen all primes $p>3$), we get $\gamma=1$ in
 $\pi_1^{\orb}(X')$.

Now the fiber $F_0$ intersects the chain $\cC$ in the central curve $E_1$. Note
that the loops around the curves $C_8, \ldots,  C_1,  E_1 ,  C_1' , \ldots ,  C_8'$ are
given as, in this order, $b, \ldots, b^8, b^9, b^{10},\ldots, b^{17}$. Then the loop
around $E_1$ is $b^9$, which produces the relation $b^9=[\alpha,\beta]=1$.
Now we use the fact that there are two extra sections $E_2$, $E_3$ of the fibration.
These avoid $T,D,T',D'$, $E_2$ intersects $C_4,C_4'$, and $E_3$ intersects $C_7,C_7'$.
They intersect $A$ in two points. The loop around $A$ is trivial $\gamma=1\in \pi_1^{\orb}(X')$.
So we get relations $b^5=b^{13}$ and $b^2=b^{16}$. 
So $b^8=1$, that together with $b^9=1$ imply that $b=1$. 
 
Finally take the surfaces $T_n$, $n \geq 1$, lying  in a neighbourhood of 
$T\cup D$. Let $c_j$ be a small loop around $T_j$, $j\geq 0$, and recall
 that $a$ is a small loop around $D$, and $a^2=1$. All curves $T_j$ intersect transversally, so $[c_j,c_k]=1$, for all $j,k$. 
 Let $c_0$ be a small loop around $T$, then $c_0=a^3=a$, since $T\cdot D=3$.
 Move $T$ slightly off to get a relation 
  $c_1^9c_2^{18}\ldots c_N^{9N}a=[\alpha\beta^{-2} ,\gamma]=1$
 (the last loops are the generators of $\pi_1(T)$), using that $T\cdot T_k=9k$.
The group generated by $a, c_1,\ldots, c_N$ is abelian. 
We write the relations additively,
  \begin{equation}\label{eqn:relat}
  9 (c_1+2c_2+\ldots +N c_N)+a=0 .
   \end{equation}

Put for brevity, $m_j=m_{T_j}$. In $\pi_1^{\orb}( X')$ we have the 
extra relations $m_j c_j=0$. 
 Multiply (\ref{eqn:relat}) by $M_k=\prod_{j\neq k} m_j $ to get $9M_k k c_k=0$. 
 Now $m_kc_k=0$, and $\gcd(m_k,M_k)=1$ since $\gcd(m_j,m_k)=1$ for $j\neq k$,
 and also $\gcd(k,m_k)=1$ because we chose $\gcd(p_{km},k)=1$, and $m_k=\prod_m p_{km}$, and
 $\gcd(m_k,3)=1$ as all primes are $p>3$.
 All together give $c_k=0$, for $k\geq 1$. Then (\ref{eqn:relat}) gives $a=0$ 
 as well in $\pi_1^{\orb}( X')$.

Once that $\pi_1^{\orb}(X')=1$, we get $\pi_1(M)=0$ by the argument at the end of Section \ref{subsec:topology}
using that $H_1(M,\ZZ)=0$ from Theorem \ref{thm:K-contact}.
\end{proof}

Therefore the K-contact manifold from Theorem \ref{thm:K-contact} is a Smale-Barden manifold.

\section{Bounding the number of singular points} \label{sec:4}

Our last step is to prove that the manifold $M$ from Theorem \ref{thm:K-contact} 
does not admit a Sasakian structure. 
Suppose that $M$ admits a Sasakian structure. Then there is Seifert bundle
 $$
  \pi: M\to Y,
  $$
where $Y$ satisfies the following conditions that we state explicitly:

\begin{conditions}\label{condi}
$Y$ is a K\"ahler cyclic orbifold with $b_2=3$, $b_1=0$. Associated to each prime $p_{nm}$, $1\leq n,m\leq N$, there
is a collection of three complex curves
 \begin{equation}\label{eqn:Dnm}
  D_1^{nm}, D_2^{nm}, D_3^{nm},
  \end{equation}
which have genus $g( D_1^{nm})=9n^2+1$, $g(D_2^{nm})=9m^2+1$, $g(D_3^{nm})=10$. For each $(n,m)$,
the three curves (\ref{eqn:Dnm}) are disjoint, and span $H_2(Y,\QQ)$. 
Moreover, these curves are all nice, and intersect pairwise nicely.
\end{conditions}

This follows from the homology of $M$ appearing in Theorem \ref{thm:K-contact} and the
relation to the homology of the base of a Seifert bundle given in Proposition \ref{thm:16MRT}.
The curves (\ref{eqn:Dnm}) are the components of the isotropy locus. 
Conditions \ref{condi} imply that at most two different curves can go through a point 
of the singular set $P\subset X$. Some of the curves could be equal 
 for different values of $(n,m)$, e.g.\  $D_1^{nk}=D_1^{nl}$, $k\neq l$; or
 $D_2^{km}=D_2^{lm}$, $k\neq l$; or  $D_1^{nk}=D_2^{ln}$; or $D_3^{nm}=D_3^{kl}$.
Clearly, this can only happen if the genera of the involved curves are the same.

To prove that $M$ cannot admit a Sasakian structure, we are going to get a contradiction if
we assume the existence of a 
K\"ahler orbifold  $Y$ satisfying Conditions \ref{condi}, for some $N\gg 0$ large enough. After preparatory
work in Sections \ref{sec:4}, \ref{sec:Many} and \ref{sec:6}, this will be proved in Theorem \ref{thm:main-Sasakian} in
Section \ref{sec:7}.

To start with, let $Y$ be a K\"ahler cyclic orbifold satisfying Conditions \ref{condi}. We do not assume
$\pi_1^{\orb}(Y)=0$. 
Our first task is to obtain a universal bound on
the number of singular points $\#P$.

Let $\pi:\tilde Y\to Y$ be the minimal resolution of singularities. For every cyclic singularity $p\in Y$,
$\pi^{-1}(p)=E_p=C_1\cup \ldots \cup C_l$ is a chain of rational curves with self-intersection $C_j^2=-b_j \leq -2$.
For any curve $A$ in $Y$, we denote the proper transform as $\tilde A$.
Let $A$ be a nice curve through $p$ (for the sake of simplicity, assume that there is only one singular point). 
Then $\tilde A$ intersects transversely just one of the extremal curves of the chain
$C_1,C_l$. For concreteness, say it is $C_1$. We have that (see \cite[p.\ 80]{BPV})
 $$
  \pi^*A =\tilde A+ \sum r_{i} C_i\, ,
  $$
where $\frac{r_{k+1}}{r_{k}}= [b_{k+1},\ldots, b_l]^{-1}$, for $0\leq k\leq l-1$, where $r_0=1$. Note that 
$\frac{r_{k+1}}{r_k}<1$, hence $0<r_l<r_{l-1}<\ldots < r_1<1$.
Next, let $\tilde K$ be the canonical divisor of $\tilde Y$, and $K$ the canonical divisor of $Y$. In this case,
$\tilde K$ is not the proper transform of $K$. We have a formula (again assuming only one singular point)
 \begin{equation}\label{eqn:tildeK}
  \tilde K=\pi^* K - \sum \lambda_i C_i\, ,
 \end{equation}
where $\lambda_i\geq 0$. If there are more singular points, then we have to add the contribution over
each $p\in P$.
  
\begin{lemma}\label{lem:lem}
 Let $A,B$ be two effective divisors in $Y$, and let $\tilde A,\tilde B$ be the
 proper transforms. Then $A\cdot B\geq \tilde A\cdot \tilde B$.
\end{lemma}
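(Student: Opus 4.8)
The plan is to express everything in terms of the pullbacks $\pi^*A$ and $\pi^*B$ on the smooth resolution $\tilde Y$, where the intersection pairing is well understood, and to exploit the non-negativity of the coefficients appearing in the exceptional part of the pullback. I would write the decompositions
$$
\pi^*A=\tilde A+\sum_i r_i^A\, C_i,\qquad \pi^*B=\tilde B+\sum_i r_i^B\, C_i,
$$
where the sums run over all exceptional curves $C_i$ of $\pi$ (over all singular points of $Y$) and the rational numbers $r_i^A,r_i^B$ are determined by the orthogonality relations $\pi^*A\cdot C_j=\pi^*B\cdot C_j=0$, valid for the pullback of any divisor under a birational morphism. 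The first thing I would record is that these coefficients are non-negative: the condition $\pi^*A\cdot C_j=0$ reads $\sum_i r_i^A(C_i\cdot C_j)=-\,\tilde A\cdot C_j$, and since $\tilde A$ is the proper transform of a curve in $Y$ it shares no component with the exceptional locus, so $\tilde A\cdot C_j\ge 0$. The intersection matrix $(C_i\cdot C_j)$ is negative definite with non-negative off-diagonal entries, hence $-(C_i\cdot C_j)$ is a nonsingular M-matrix whose inverse has non-negative entries; solving the linear system then forces $r_i^A\ge 0$, and likewise $r_i^B\ge 0$. This is the same non-negativity already used for tails in the discussion preceding the lemma, and may alternatively be read off from \cite[p.\ 80]{BPV}.

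Next I would use that pullback preserves intersection numbers and annihilates the exceptional curves. By the projection formula for the proper birational map $\pi$ we have $\pi^*A\cdot\pi^*B=A\cdot B$, and by construction $\pi^*A\cdot C_i=0$ for every $i$. Substituting the decomposition of $\pi^*A$ and using $\bigl(\sum_i r_i^A C_i\bigr)\cdot \pi^*B=0$ gives
$$
A\cdot B=\pi^*A\cdot \pi^*B=\tilde A\cdot \pi^*B=\tilde A\cdot\tilde B+\sum_i r_i^B\,(\tilde A\cdot C_i).
$$
Since $\tilde A$ is effective with no component in common with any $C_i$, we have $\tilde A\cdot C_i\ge 0$, and we just showed $r_i^B\ge 0$; therefore the correction term $\sum_i r_i^B(\tilde A\cdot C_i)$ is non-negative, which yields $A\cdot B\ge \tilde A\cdot\tilde B$ as claimed.

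The only genuinely substantive point is the non-negativity of the exceptional coefficients $r_i^A,r_i^B$; everything else is a formal manipulation with the pullback. I expect this to be the main obstacle, and the cleanest way to settle it is the M-matrix argument above (negative definiteness of the exceptional intersection form together with non-negative off-diagonal entries), which holds uniformly over all cyclic quotient singularities of $Y$ and, notably, requires neither $A$ nor $B$ to be nice, irreducible, or to share no components.
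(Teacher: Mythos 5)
Your proof is correct and follows essentially the same route as the paper's: both expand the pullback into proper transform plus exceptional part, use $\pi^*A\cdot C_i=0$ to drop one exceptional contribution, and conclude from the non-negativity of the exceptional coefficients $r_i$ and of $\tilde A\cdot C_i$. The only difference is one of detail — the paper first reduces to irreducible curves and simply asserts $r_i\geq 0$ (relying on the earlier discussion of pullbacks from \cite[p.~80]{BPV}), whereas you justify that non-negativity by the M-matrix argument, which is exactly the right way to settle the one substantive point.
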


\begin{proof}
It is enough to prove it for $A,B$ two irreducible curves in $Y$.
Then
 \begin{align*}
 & A\cdot B=\pi^*A \cdot \pi^* B=\pi^*A \cdot \tilde B=(\tilde A+E)\cdot \tilde B
 \geq \tilde A\cdot \tilde B.
  \end{align*}
 The second equality follows since $\pi^*A\cdot C_i=A\cdot \pi_*C_i=0$, for any exceptional divisor $C_i$.
 The third, because $\pi^*A=\tilde A+E$, where $E=\sum r_iC_i$, where $r_i\in \QQ$, $r_i\geq 0$,
 and $C_i$ are exceptional divisors. The last equality is due to $C_i\cdot \tilde B\geq 0$, being
 two distinct effective curves.
\end{proof}

Now let $D_1,D_2,D_3$ be three nice curves, which are disjoint, and span $H_2(Y,\QQ)$.  We have the following:

\begin{lemma}\label{lem:Sigma}
 The $\QQ$-divisor $K+D_1+D_2+D_3$ is effective. Also $K+D_i$ is effective, $i=1,2,3$.
 \end{lemma}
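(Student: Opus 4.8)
The plan is to pull everything back to the minimal resolution $\pi:\tilde Y\to Y$, where the canonical class $\tilde K$ and the proper transforms $\tilde D_i$ are honest smooth divisors, prove there that $\tilde K+\tilde D_1+\tilde D_2+\tilde D_3$ and each $\tilde K+\tilde D_i$ are effective, and then transfer effectivity back down to $Y$ by pushing forward. This avoids having to do adjunction and Riemann--Roch directly on the orbifold, where the $D_i$ are only $\QQ$-Cartier.

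First I would record the vanishing that powers the argument. Since cyclic quotient singularities are rational, $\pi_*\cO_{\tilde Y}=\cO_Y$ and $R^1\pi_*\cO_{\tilde Y}=0$, so $H^1(\tilde Y,\cO_{\tilde Y})=H^1(Y,\cO_Y)$; as $Y$ is K\"ahler with $b_1(Y)=0$ this group vanishes. By Serre duality on the smooth surface $\tilde Y$ this gives $H^1(\tilde Y,\cO_{\tilde Y}(\tilde K))=0$. Next, write $\tilde D=\tilde D_1+\tilde D_2+\tilde D_3$. Because the $D_i$ are disjoint in $Y$ they pass through disjoint sets of singular points, so the proper transforms $\tilde D_i$ are disjoint smooth curves in $\tilde Y$, each of genus $g_i=g(D_i)\geq 1$ (resolving a nice, i.e.\ tail, curve preserves its genus). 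Adjunction identifies $(\tilde K+\tilde D)|_{\tilde D}$ with the canonical sheaf $\omega_{\tilde D}$, so the exact sequence $0\to\cO_{\tilde Y}(\tilde K)\to\cO_{\tilde Y}(\tilde K+\tilde D)\to\omega_{\tilde D}\to 0$ together with $H^1(\tilde Y,\cO_{\tilde Y}(\tilde K))=0$ yields a surjection $H^0(\tilde Y,\tilde K+\tilde D)\twoheadrightarrow H^0(\tilde D,\omega_{\tilde D})$. The target has dimension $g_1+g_2+g_3>0$, so $\tilde K+\tilde D$ is effective; the identical argument applied to a single $\tilde D_i$ (using $g_i\geq 1$) shows each $\tilde K+\tilde D_i$ is effective.

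Finally I would descend to $Y$. Using $\pi^*K=\tilde K+\sum_j\lambda_jC_j$ from (\ref{eqn:tildeK}) and the tail formula $\pi^*D_i=\tilde D_i+\sum_j r_j^{(i)}C_j$ recalled just before Lemma \ref{lem:lem}, I obtain $\pi^*\bigl(K+\sum_i D_i\bigr)=(\tilde K+\tilde D)+\sum_j\bigl(\lambda_j+\sum_i r_j^{(i)}\bigr)C_j$, whose exceptional part is effective since $\lambda_j\geq 0$ and $r_j^{(i)}\geq 0$. Hence $\pi^*\bigl(K+\sum_i D_i\bigr)$ is $\QQ$-linearly equivalent to an effective divisor; because $\pi_*\pi^*=\id$ on $\QQ$-Cartier classes and $\pi_*$ sends effective divisors to effective divisors, $K+D_1+D_2+D_3=\pi_*\pi^*\bigl(K+\sum_i D_i\bigr)$ is effective on $Y$. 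The same pushforward applied to the effective $\tilde K+\tilde D_i$ gives effectivity of each $K+D_i$.

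The cohomological input is routine once the setup is correct, so I expect the main obstacle to be the orbifold-versus-resolution bookkeeping: making sure the rationality of the cyclic singularities legitimately transports $b_1(Y)=0$ to the vanishing $H^1(\tilde Y,\cO_{\tilde Y})=0$, and that the discrepancy coefficients $\lambda_j$ and tail coefficients $r_j^{(i)}$ are genuinely nonnegative so that the exceptional correction stays effective under $\pi^*$. These are exactly the two ingredients already assembled in the excerpt (the vanishing from $b_1=0$ and the sign conventions in (\ref{eqn:tildeK}) and the tail formula), so the argument should go through cleanly.
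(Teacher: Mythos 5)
Your proposal is correct and follows essentially the same route as the paper: the identical exact sequence $0\to\cO(\tilde K)\to\cO(\tilde K+\tilde D_1+\tilde D_2+\tilde D_3)\to\oplus\,\cO_{\tilde D_i}(K_{\tilde D_i})\to 0$ on the minimal resolution, the vanishing $H^1(\tilde K)=0$ coming from $b_1=0$, positivity of sections from $g_i\geq 1$, and then pushing the resulting effective divisor down to $Y$. Your descent step (pulling back with discrepancy and tail coefficients, then applying $\pi_*$) is a slightly more elaborate bookkeeping of what the paper does in one line by taking $\Sigma=\pi(\Sigma')$, but it is the same argument.
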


\begin{proof}
We have the exact sequence
 $$
  0 \to \cO(\tilde K) \to \cO(\tilde K+ \tilde D_1+ \tilde D_2+\tilde D_3) \to \oplus \, \cO_{\tilde D_i}(K_{\tilde D_i}) \to 0,
  $$
because the $\tilde D_i$ are disjoint, and using the adjunction formula. 
As $g(\tilde D_i)=g_i\geq 1$, $H^0( \cO_{\tilde D_i}(K_{\tilde D_i}))=\CC^{g_i}\neq 0$, by Riemann-Roch. 
As $b_1(\tilde Y)=0$, so $H^1(\tilde K)=0$. Also $H^0(\tilde K)=0$, since $h^{0,2}(\tilde Y)=0$, because
$H^2(\tilde Y,\CC)$ is spanned by complex curves, so $h^{1,1}(\tilde Y)=b_2(\tilde Y)$.
Therefore $h^0(\cO(\tilde K+ \tilde D_1+ \tilde D_2+\tilde D_3))=g_1+g_2+g_3>0$, and hence there is some effective divisor
$\Sigma'\equiv \tilde K+ \tilde D_1+ \tilde D_2+\tilde D_3$. Pushing down, $\Sigma=\pi( \Sigma' )\equiv K+D_1+D_2+D_3$ in $Y$.

The last assertion is proved in the same way.
\end{proof}

Put $B=D_1+D_2+D_3$. We need to check that $K+B$ is log canonical, whose definition appears in \cite[Definition 1.16]{Ast}.
This is checked at each singular point $p\in P$. Suppose that $p\in D_1=D$ (the case that $p$ is not in any divisor $D_i$ is 
similar). Assume for simplicity that there are no more singular points, and write
 $$
 \tilde K =\pi^*(K+D) -\tilde D +\sum_{i=1}^{l} a_i C_i\, ,
 $$
where $\tilde D$ is the proper transform of $D$, 
$C_i$ are the exceptional divisors, ordered so that $\tilde D\cdot C_1=1$. 
We have to check that $a_i\geq -1$. Letting $a_0=-1$ the coefficient of $\tilde D$,
and setting $a_{l+1}=0$, we have the equalities $\tilde K\cdot C_i+C_i^2=-2$, which give 
$-a_ib_i + a_{i-1}+a_{i+1} - b_i=-2$. This is rewriten as
  \begin{equation}\label{eqn:ai0}
 (a_{i-1}-a_i) + (a_{i+1}-a_i) = (a_i+1) (b_i-2).
 \end{equation}
Let $i_0$ be such that $a_{i_0}$ is minimum. Then the left hand side of (\ref{eqn:ai0}) for $i=i_0$ is $\geq 0$. Therefore 
if $b_{i_0}>2$, then $a_{i_0}+1\geq 0$, and so $a_i\geq a_{i_0}\geq -1$, for all $i$.
If $b_{i_0}=2$ then $a_{i-1}=a_i=a_{i+1}$ and we proceed recursively. 

Recall the definition of the orbifold Euler-Poincar\'e characteristic of an orbifold with isolated singularities,
$$
 e_{\orb}(Y)= e(Y) - \sum_{p\in P} \left(1- \frac{1}{d_p}\right),
$$
where $d_p$ denotes the multiplicity of the singular point $p\in P$.

\begin{theorem} \label{prop:mas-mas}
Now let $D_1,D_2,D_3$ be three disjoint nice curves that span $H_2(Y,\QQ)$.  
Then $e_{\orb}(Y-(D_1\cup D_2\cup D_3)) \geq 0$. 
\end{theorem}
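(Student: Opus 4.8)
The plan is to pass to the minimal resolution $\pi:\tilde Y\to Y$ and reduce the statement to the non-negativity of a logarithmic Euler number. Over each $p\in P$ we have the exceptional chain $E_p$, and I form the reduced divisor $\tilde B=\tilde D_1+\tilde D_2+\tilde D_3+\sum_{p\in P}E_p$ on $\tilde Y$. Since the $D_i$ are nice, the configuration of chains with the proper transforms $\tilde D_i$ attached as tails is simple normal crossing, so $\tilde B$ is a reduced SNC divisor and $V:=\tilde Y-\tilde B$ is a smooth quasi-projective surface; note that $\tilde Y$ is projective because $H^2(\tilde Y,\CC)$ is spanned by curves, whence $h^{2,0}(\tilde Y)=0$.

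First I would identify $V$ biregularly with $Y-(\cup_i D_i)-P_0$, where $P_0=\{p\in P: p\notin\cup_i D_i\}$ is the set of singular points off the curves (a singular point lying on some $D_i$ is already deleted together with that curve, and its chain $E_p$ sits inside $\tilde B$). Writing $U=Y-\cup_i D_i$, additivity of the Euler characteristic gives $e(U)=e(V)+\#P_0$, hence
$$
e_{\orb}(U)=e(U)-\sum_{p\in P_0}\Bigl(1-\tfrac{1}{d_p}\Bigr)=e(V)+\sum_{p\in P_0}\tfrac{1}{d_p}.
$$
Since the last sum is $\geq 0$, it suffices to prove $e(V)\geq 0$.

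The key point is that $K_{\tilde Y}+\tilde B$ is effective. Indeed, the argument of Lemma \ref{lem:Sigma}, carried out on $\tilde Y$, already produces an effective divisor $\Sigma'\equiv K_{\tilde Y}+\tilde D_1+\tilde D_2+\tilde D_3$, using $b_1(\tilde Y)=0$, $h^{0,2}(\tilde Y)=0$, and the adjunction sequence for the disjoint $\tilde D_i$. Adding the effective exceptional divisor $\sum_p E_p$ preserves effectiveness, so $h^0(\tilde Y,K_{\tilde Y}+\tilde B)\neq 0$ and the logarithmic Kodaira dimension satisfies $\bar\kappa(V)=\kappa(\tilde Y,K_{\tilde Y}+\tilde B)\geq 0$. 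The log canonical property established through \eqref{eqn:ai0} guarantees that this reduced boundary is the correct one and that $(\tilde Y,\tilde B)$ is the honest SNC model computing $\bar\kappa(V)$.

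Finally I would invoke the classification theory of open algebraic surfaces: a smooth quasi-projective surface $V$ with $\bar\kappa(V)\geq 0$ has non-negative logarithmic Euler number, $e(V)\geq 0$. This is the logarithmic analogue of the fact that a non-uniruled surface has $c_2\geq 0$ — surfaces with $e(V)<0$ carry an $\mathbb{A}^1$- or $\CC^*$-fibration, which forces $\bar\kappa=-\infty$ — and it rests on the structure theory of almost-minimal models of logarithmic surfaces rather than on a direct computation. Combined with the displayed identity, this gives $e_{\orb}(U)\geq 0$. The main obstacle is precisely this last input: everything else is resolution bookkeeping and an application of Lemma \ref{lem:Sigma}, but the non-negativity $e(V)\geq 0$ under $\bar\kappa(V)\geq 0$ is the one genuinely non-elementary ingredient that must be imported.
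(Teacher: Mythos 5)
Your bookkeeping in the first two steps is correct: $V\cong (Y-\cup_i D_i)-P_0$, the identity $e_{\orb}(U)=e(V)+\sum_{p\in P_0}1/d_p$ holds, and $K_{\tilde Y}+\tilde B$ is indeed effective by Lemma \ref{lem:Sigma} plus the exceptional chains, so $\bar\kappa(V)\geq 0$. The gap is the final input, which is false as stated: for smooth quasi-projective surfaces it is \emph{not} true that $\bar\kappa(V)\geq 0$ forces $e(V)\geq 0$. Take $V=(\CC^*\x\CC^*)\setminus\{pt\}$. Removing a codimension-two subset does not change the logarithmic Kodaira dimension, so $\bar\kappa(V)=\bar\kappa(\CC^*\x\CC^*)=0$, yet $e(V)=-1$. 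Concretely, compactify by $X=\mathrm{Bl}_{pt}(\PP^1\x\PP^1)$ with SNC boundary $D$ given by the four toric lines plus the exceptional curve $E$; then $K_X+D=2E$ is effective, but all of its positivity sits on an exceptional curve and gives no control whatsoever on $e(V)$. Your heuristic fails at exactly this point: a $\CC^*$-fibration does \emph{not} force $\bar\kappa=-\infty$ (only an $\mathbb{A}^1$-fibration does); this $V$ carries a $\CC^*$-fibration with one degenerate fibre $\CC^*\setminus\{pt\}$ of Euler number $-1$, and has $\bar\kappa=0$.

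This failure mode is precisely the one your construction creates: for each $p\in P_0$ you place the whole chain $E_p$ into $\tilde B$, i.e.\ you puncture the open surface, and each puncture leaves $\bar\kappa$ unchanged while lowering $e$ by one, with the effectivity of $K_{\tilde Y}+\tilde B$ partly carried by those exceptional chains, exactly as in the counterexample. Relatedly, your target $e(V)\geq 0$ is strictly stronger than the statement to be proved, which is only $e(V)+\sum_{p\in P_0}1/d_p\geq 0$; the correction terms $1/d_p$ cannot be recovered from the smooth surface $V$ alone, since $V$ does not remember the orders $d_p$. The tool that produces exactly this corrected inequality is the orbifold log-Bogomolov--Miyaoka--Yau inequality \cite[Theorem 10.14]{Ast} applied to the pair $(Y,B)$ itself, together with \cite[Theorem 10.8]{Ast} identifying $c_2(\Omega^1_Y(\log B))$ with $e_{\orb}(Y-B)$ --- and that theorem requires $K_Y+B$ to be \emph{nef}. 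Establishing nefness is the actual content of the paper's proof (the case analysis over irreducible curves $A$, ending with the $(-1)$-curve case and the local estimates $\ell_p\geq 1$ at singular points); your proposal replaces it by a general statement that is false, so the proof does not go through, and any repair via almost-minimal models/peeling would have to confront the same problematic curves that the nefness check handles.
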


\begin{proof}
Let $B=D_1+D_2+D_3$.
We already know that $(Y,B)$ is log canonical and effective.
If we have that $K+B$  is nef, then \cite[Theorem 10.14]{Ast} implies that
 $$
  3c_2 (\Omega_Y^1(\log B)) \geq c_1(\Omega_Y^1(\log B))^2 = (K+B)^2 \geq 0,
  $$
where the last inequality is due to the fact that $K+B$ is effective and nef.
By \cite[Theorem 10.8]{Ast}, we have that $c_2 (\Omega_Y^1(\log B)) =e_{\orb}(Y-B)$ and the result follows.

It remains to see that $K+B$ is nef. Let $A\subset Y$ be an irreducible curve, and let us check that
$(K+B)\cdot A \geq 0$.
First assume that $A=D_i$. By Lemma \ref{lem:lem}, $K+D_i$ is effective. Moreover $H^0(\tilde K+\tilde D_i) \to 
H^0(\cO_{\tilde D_i}(K_{\tilde D_i}))$ is bijective, and as $K_{\tilde D_i}$ is base-point free,
$\tilde K+\tilde D_i$ can be represented by a divisor not containing $\tilde D_i$. Hence there
is $C\equiv K+D_i$ not containing $D_i$, and thus
 $(K+B)\cdot A= (K+D_i)\cdot D_i =C\cdot D_i \geq 0$.

So we can suppose now that $A\neq D_i$, $i=1,2,3$. 
Let $\Sigma \equiv K+B$ be an effective $\QQ$-divisor. Write $\Sigma=r A+ T$, where $T$ does
not contain $A$. If $A^2\geq 0$ then $(K+B)\cdot A =\Sigma\cdot A\geq 0$, and we are done.
So we can assume that $A^2<0$. By Lemma \ref{lem:lem}, we also have $\tilde A^2<0$.

Next suppose that $\tilde K\cdot \tilde A\geq 0$. Then as $K=\pi_*\tilde K$ (although
$\tilde K$ is not the strict transform of $K$), 
 $$
 K \cdot A =\pi_*\tilde K\cdot A=\tilde K\cdot \pi^*A=\tilde K\cdot (\tilde A+E)\geq \tilde K\cdot \tilde A\geq 0,
 $$
where $E$ is an effective $\QQ$-divisor consisting of exceptional curves $E=\sum r_iC_i$, $r_i\geq 0$.
For any $C_i$, it is $\tilde K\cdot C_i\geq0$, since they are rational curves with $C_i^2\leq -2$.
As $B\cdot A\geq 0$ then $(K+B)\cdot A\geq 0$, as required.

So we are left with the case of an irreducible curve $\tilde A$ with $\tilde A^2<0$, $\tilde K\cdot \tilde A<0$. 
As $p_a(\tilde A)=\tilde A^2 + \tilde K\cdot \tilde A <0$, then $\tilde A$ must be a smooth
rational curve, with $\tilde A^2=-1$ and $\tilde K\cdot \tilde A=-1$, that is an exceptional divisor for a minimal 
model of $\tilde Y$.

If $A\cdot D_1=A\cdot D_2=0$, then $A\equiv \lambda D_3$, for some $\lambda >0$, which
is impossible since $A\cdot D_3\geq 0$ and $D_3^2<0$.
If $A\cdot D_2=A\cdot D_3=0$, then $A\equiv \lambda D_1$, for some $\lambda >0$, and hence
$A^2>0$, contrary to our current assumption.
Finally, 
if $A\cdot D_1=0$, then $A\equiv \lambda_2 D_2+\lambda_3 D_3$, for some $\lambda_i\in \QQ$.
Since $A\cdot D_i\geq 0$, then $\lambda_i\leq 0$, $i=2,3$. Hence $A\leq 0$, which is a 
contradiction as $A$ is effective.

So we can assume  $A\cdot D_1>0$ and $A\cdot D_2>0$ (after swapping $D_2,D_3$ if necessary). 
Now $(\tilde K+\tilde D_1+\tilde D_2+\tilde D_3 )\cdot \tilde A=-1 +\sum \tilde D_i \cdot \tilde A$.
 If $\tilde D_i\cdot \tilde A\geq 1$ for some $i$, then 
 $(\tilde K+\tilde D_1+\tilde D_2+\tilde D_3 )\cdot \tilde A\geq 0$. Recall that there is an
 effective $\Sigma'\equiv \tilde K+\tilde D_1+\tilde D_2+\tilde D_3$, with $\pi(\Sigma')=\Sigma \equiv K+B$.
 So 
  $$
  (K+B)\cdot A=\Sigma'\cdot \pi^*A=\Sigma'\cdot (\tilde A+E) \geq 0,
  $$ 
where we have $E=\sum r_i C_i$, $r_i\geq 0$, and 
 $\Sigma'\cdot C_i\geq 0$ because $\tilde D_j\cdot C_i\geq 0$ and $\tilde K\cdot C_i\geq 0$.
 
Hence we can further assume that $\tilde D_i\cdot \tilde A=0$, for all $i$. 
 As $A\cdot D_1>0$, $A\cdot D_2>0$,
 this means that $\tilde A$ intersects a chain of exceptional divisors $E_p$,
 for a singularity $p\in A\cap D_i$, for both cases $i=1,2$.
 By Lemma \ref{lem:lem}, we have
 $$
  (K+D_1+D_2+D_3+A) \cdot A \geq 
 (\tilde K+\tilde D_1+\tilde D_2+\tilde D_3+\tilde A)\cdot \tilde A + \sum_{p\in P} \ell_p
 =-2 +\sum_{p\in P} \ell_p,
  $$
where $\ell_p$ denotes a local contribution of the intersection at $p$. There is contribution to 
$\ell_p$ only if $p\in A\cap D_i$. This happens at least for two singular points, hence
it is enough to see that $\ell_p\geq 1$ if
$p\in A\cap D_i$. 
Once we have checked that, we have that $(K+D_1+D_2+D_3+A)\cdot A\geq 0$. 
As we are assuming $A^2<0$, we have
$(K+D_1+D_2+D_3)\cdot A\geq 0$, i.e.\ $(K+B)\cdot A\geq 0$, completing the proof.

Let us finally see that $\ell_p\geq 1$.
The proper transform $\tilde A$ intersects the chain $E_p=C_1\cup \ldots\cup C_l$,
but not $\tilde D_i$.  
Let $\alpha_j=\tilde A\cdot C_j \in \ZZ_{\ge 0}$.
Take (a germ of) a curve $A'_j$ that intersects transversally $C_j$ and no other $C_k$. Then
$\tilde A\equiv \sum \alpha_j A'_j$ in a neighbourhood of $E_p$, and so $A\equiv \sum \alpha_j A_j$ where $A_j=\pi(A_j')$.
Then the contribution at $p$ is
 \begin{align*}
 \ell_p =& \left(  (K+D_1+D_2+D_3+A)\cdot A \right)_p \\  = &
 \left(  (K+D_1+D_2+D_3+\sum \alpha_j A_j)\cdot \sum \alpha_k A_k \right)_p \\
 =& \sum \alpha_j \left(  (K+D_1+D_2+D_3+ A_j)\cdot A_j \right)_p  \\ & + \sum_{j\neq k}\alpha_j\alpha_k (A_j\cdot A_k)_p
 +\sum (\alpha_j^2-\alpha_j)(A_j^2)_p\, .
  \end{align*}
The local intersection number is defined in \cite{Cogo}. As $(A_j\cdot A_k)_p\geq 0$, we see that it is 
enough to prove the result for $A=A_j$. We assume this henceforth.

To compute $\left(  (K + D_i + A)\cdot A \right)_p$, note that $A$ only intersects $C_j$. We contract
$C_1\cup \ldots \cup C_{j-1}$ and $C_{j+1}\cup \ldots\cup C_l$ and get an orbifold $\bar Y$, such that
there are contractions $\tilde Y\to \bar Y\to Y$. The map $\varpi:\bar Y\to Y$ has an exceptional
divisor $\bar E$ with two orbifold points $p_1,p_2$ of multiplicities
$d_1,d_2$ respectively (it is $d_1=1$ if $j=1$, and $d_2=1$ if $j=l$). The proper transform of $D_i$ is
$\bar D_i$ with $p_1=\bar D_i\cap \bar E$, which is a nice intersection. The proper transform
of $A$, denoted $A$ again, intersects $\bar E$ transversally at a smooth point.

We have the following intersection numbers (see \cite{Cogo}). Let 
$\frac{a_1}{d_1}=[b_{j-1},\ldots,b_1]^{-1}$, 
$\frac{a_2}{d_2}=[b_{j+1},\ldots, b_l]^{-1}$ be the
continuous fractions  associated to the singularities (with $a_j=0$ if $d_j=1$), and 
let 
$\frac{a'_1}{d_1}=[b_{1},\ldots,b_{j-1}]^{-1}$, 
$\frac{a'_2}{d_2}=[b_{l},\ldots, b_{j+1}]^{-1}$ 
be the dual ones. Then, writing $b=b_j$,
 \begin{align*}
  (\bar D_i\cdot \bar E)_{p_1} &= \frac{1}{d_1} \\
  \bar E^2 &= -b+ \frac{a_1}{d_1} +\frac{a_2}{d_2} \\
  (\bar D_i^2)_{p_1} &= \frac{a_1'}{d_1} 
\end{align*}
Using the adjunction equality for a nice curve,
$$
K_{\bar Y}\cdot C+ C^2=-e_{\orb}(C)=2g(C)-2+\sum_{p\in C} \left(1-\frac{1}{d_p}\right),
$$
the corresponding local contribution for $C=\bar E, \bar D_i$, gives
 \begin{align*}
 K_{\bar Y}\cdot \bar E &= b-\frac{a_1+1}{d_1}-\frac{a_2+1}{d_2} \\
 (K_{\bar Y}\cdot \bar D_i)_p &= 1-\frac{a_1'+1}{d_1}
\end{align*}

Recall that we aim to compute $\ell_p=\left((K+D_i+A)\cdot A\right)_p=\left((\bar K+\bar D_i+A)\cdot \varpi^*A\right)_p$,
where the right hand side accounts for the contribution to the intersection along the exceptional divisor.
We write $\varpi^*A=A+ x \bar E$, and compute $x\in \QQ$ knowing that $\varpi^*A\cdot \bar E=0$
and $A\cdot \bar E=1$. Then 
 $$
 x=-\frac{1}{\bar E^2}= \frac{d_1d_2}{bd_1d_2-a_1d_2-a_2d_1}\, ,
 $$
and hence
 \begin{align*}
 \ell_p &= \left((K+D_i+A)\cdot A\right)_p = \left((\bar K+\bar D_i+A)\cdot \varpi^*A \right)_p \\
 &= \left( (\bar K+\bar D_i+A)\cdot (A+ x\bar E)\right)_p \\
 &= \left( b-\frac{a_1+1}{d_1}-\frac{a_2+1}{d_2} +\frac{1}{d_1}+1 \right)\frac{d_1d_2}{bd_1d_2-a_1d_2-a_2d_1} \\
  &=1+ \frac{d_1(d_2-1)}{bd_1d_2-a_1d_2-a_2d_1} \geq 1,
  \end{align*}
as required. 
\end{proof}

By Theorem \ref{prop:mas-mas}, the orbifold Euler-Poincar\'e characteristic  is
 $$
 e_{\orb}(Y-(D_1\cup D_2\cup D_3)) = 5 - \sum (2-2g_i) 
 - \sum_{p\in Y \atop p\notin D_1\cup D_2\cup D_2} \left(1-\frac{1}{d_p}\right) \geq 0,
 $$
 where $g_1,g_2,g_3$ are the genus of $D_1,D_2,D_3$, respectively.
As $d_p \geq 2$, we deduce 
 \begin{equation}\label{eqn:bounddd}
  \# \{p\in P, p\notin D_1\cup D_2\cup D_2\} \leq 2(2g_1+2g_2+2g_3-1).
 \end{equation}

Now let us have three collections of curves $(D_1,D_2,D_3)$, $(D'_1,D'_2,D'_3)$, $(D''_1,D''_2,D''_3)$
in the same situation, and suppose that all curves are distinct. Let $A=\{p\in P, p\in D_1\cup D_2\cup D_3\} ,
A'=\{p\in P, p\in D_1'\cup D_2'\cup D_3'\} ,
A''=\{p\in P, p\in D_1''\cup D_2''\cup D_3''\}$.
If $p\in Y$, then it can be at most in two curves (since they intersect nicely). Therefere either $p\notin
A, p\notin A'$ or $p\notin A''$. 
So $P\subset (P-A)\cup (P-A')\cup (P-A'')$.
By the equality (\ref{eqn:bounddd}) above,
 \begin{equation}\label{eqn:T0}
  \# P \leq 2(2g_1+2g_2+2g_3-1)+2(2g'_1+2g'_2+2g'_3-1)+2(2g''_1+2g''_2+2g''_3-1),
 \end{equation}
where $g_i,g_i',g_i''$ denote the genus of the respective curves. 

\begin{corollary}\label{cor:Pbounded}
 Suppose that we have five bases with genera $\{g_1,g_2,10\}$, $\{g_1',g_2',10\}$, $\{g_1'',g_2'',10\}$,
 $\{g_1''',g_2''',10\}$, $\{g_1'''',g_2'''',10\}$, 
 and all $g_1,g_2$, $g_1',g_2'$, $g_1'',g_2''$, $g_1''',g_2'''$, $g_1'''',g_2''''$ and 
 $10$ are distinct numbers. Then there is some $\cota_0$ (independent
 of $Y$) such that $\#P\leq \cota_0$.
\end{corollary}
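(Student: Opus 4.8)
The plan is to reduce the statement to the counting bound \eqref{eqn:T0}, which already yields a universal bound on $\#P$ as soon as we have \emph{three} of the bases all of whose nine curves are pairwise distinct. Since the partner curves (those of genus other than $10$) have pairwise distinct genera by hypothesis, any two of the five bases are forced to be disjoint as sets of curves \emph{except} possibly for a common genus-$10$ curve. Thus the whole problem collapses to the following: among the five genus-$10$ curves occurring as third members of the bases, produce three that are pairwise distinct. Once such three bases are isolated, their nine curves are automatically all distinct and \eqref{eqn:T0} gives $\#P\le T_0$ with $T_0$ depending only on the (fixed) list of genera.

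The heart of the matter is therefore a bound on how many of the bases a single genus-$10$ curve $\Gamma$ can belong to. Here I would use the Hodge index theorem: since $Y$ is a K\"ahler orbifold with $b_1=0$, $h^{2,0}=0$ and $b_2=3$ (compare the proof of Lemma~\ref{lem:Sigma}), the intersection form on $H_2(Y,\QQ)\cong\QQ^3$ has signature $(1,2)$, i.e.\ $b^+=1$. Within any basis the three curves are pairwise \emph{disjoint}, hence pairwise orthogonal, so their Gram matrix is diagonal with \emph{exactly one} positive entry; moreover both partners of $\Gamma$ lie in $\Gamma^{\perp}\cong\QQ^2$. I would split according to the sign of $\Gamma^2$. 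If $\Gamma^2>0$ then $\Gamma^{\perp}$ is negative definite; writing any further effective curve disjoint from $\Gamma$ in terms of one orthogonal partner pair and using that distinct irreducible curves meet non-negatively forces its class to be a non-positive combination of the two partner classes, which is impossible because an effective class has positive area against the K\"ahler form. Hence the partner pair is essentially unique. If $\Gamma^2<0$ then $\Gamma^{\perp}$ has signature $(1,1)$, the \emph{negative} member of each orthogonal partner pair is proportional to the unique negative direction, and two distinct such curves would meet negatively; so the negative partner is a single fixed curve, and a second basis through $\Gamma$ would repeat its genus, contradicting the distinct-genera hypothesis. In either case $\Gamma$ lies in at most two of the bases.

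It then remains to invoke pigeonhole: five genus-$10$ slots, each distinct curve filling at most two of them, leave at least $\lceil 5/2\rceil=3$ distinct genus-$10$ curves, producing three bases with all curves distinct, to which \eqref{eqn:T0} applies. The main obstacle is precisely the rigidity step of the previous paragraph: one must combine the signature $(1,2)$ constraint with the positivity of areas and the non-negativity of intersections of distinct effective curves, and treat the two signs of $\Gamma^2$ separately, the case $\Gamma^2<0$ being the one where the distinctness of the partner genera is genuinely used. I expect the careful bookkeeping of effective versus non-effective classes in $\Gamma^{\perp}$, rather than any single numerical estimate, to be the delicate point.
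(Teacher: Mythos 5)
Your skeleton matches the paper's proof: reduce to finding three bases all nine of whose curves are pairwise distinct (by the distinct-genera hypothesis the only possible coincidences are among the genus-$10$ curves), then apply the bound (\ref{eqn:T0}). Your positive case $\Gamma^2>0$ is also sound (it is the paper's Lemma \ref{lem:1-a}). The genuine gap is your rigidity claim in the case $\Gamma^2<0$: you assert that in the signature-$(1,1)$ space $\Gamma^\perp$ ``the negative member of each orthogonal partner pair is proportional to the unique negative direction, and two distinct such curves would meet negatively''. This is false. In signature $(1,1)$ there is no distinguished negative direction, and two negative classes meet negatively only when they lie in the same component of the negative cone. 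Concretely, take an orthonormal basis $e_1,e_2$ of $\Gamma^\perp$ ($e_1^2=1$, $e_2^2=-1$) coming from one partner pair $(D_1,D_2)=(\sqrt{m_1}\,e_1,\,\sqrt{m_2}\,e_2)$, and consider a second orthogonal pair
\begin{equation*}
D_1'=(a_1,-b_1),\qquad D_2'=\mu\,(b_1,-a_1),\qquad a_1>b_1\geq 0,\ \mu>0 .
\end{equation*}
Then $(D_1')^2>0>(D_2')^2$, and every intersection among the four distinct classes is $\geq 0$; in particular $D_2\cdot D_2'=\mu a_1\sqrt{m_2}>0$, even though both are negative classes (they sit in opposite cones). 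Positivity against the projection of the K\"ahler class to $\Gamma^\perp$ is also compatible with this configuration. So two bases can share a negative genus-$10$ curve while having genuinely non-proportional partners; your conclusion that ``the negative partner is a single fixed curve'' does not follow. Indeed, this two-pair configuration is exactly the intermediate stage in the paper's proof of Lemma \ref{lem:1-b} --- which is precisely why that lemma is stated for \emph{three} bases, not two.

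The repair is the three-pair argument: if $\Gamma$ lay in three of the bases, then writing a third pair as $D_1''=(c_1,-d_1)$, $D_2''=\mu'(d_1,-c_1)$, the conditions $D_1'\cdot D_1''\geq 0$ and $D_2'\cdot D_2''\geq 0$ force $a_1c_1-b_1d_1=0$, so $D_1''$ is proportional to $D_2'$; two of the bases are then proj-equivalent, hence share their non-positive curves, contradicting the distinctness of the genera. This yields the bound you actually stated (at most two bases per genus-$10$ curve, at most one when $\Gamma^2>0$), after which your pigeonhole over the five bases goes through unchanged: at least $\lceil 5/2\rceil=3$ pairwise distinct genus-$10$ curves, hence three bases with all curves distinct, and (\ref{eqn:T0}) finishes. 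In short, the multiplicity bound you claim is correct, but your proof of it in the negative case is wrong, and the only available fix is the paper's Lemma \ref{lem:1-b}, which makes the completed argument essentially identical to the paper's.
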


\begin{proof}
For checking this, we use Definition \ref{lem:proj-equiv} from upcoming Section \ref{sec:Many} (the results that 
we use for this proof are independent of Section \ref{sec:Many}). 
If among the five curves of genus $10$, say $D_3,D_3',D_3'',D_3''',D_3''''$, there
are only two distinct curves, then three of them coincide. Suppose that $D_3=D_3'=D_3''$. 
Then Lemma \ref{lem:1-b} (below) implies that two of the bases 
 (say $\varepsilon,\varepsilon'$) are
 proj-equivalent, and hence $D_2'=\lambda_2D_2$, with $\lambda_2>0$. As $D_2\neq D_2'$ because
 they have different genus, we get $D_2\cdot D_2'\geq 0$. But then $\lambda_2 D_2^2\geq 0$, which
 is a contradiction, as $D_2^2<0$.
 
Therefore there are three of the bases with all curves distinct, and we take $\cota_0$ as the 
right hand side of the formula (\ref{eqn:T0}).
\end{proof}

The assumption of Corollary \ref{cor:Pbounded} is achieved as soon as we take $N\geq 11$
for (\ref{eqn:Dnm}).

\section{Many collections of orthogonal bases of curves} \label{sec:Many}

Let $Y$ be a  K\"ahler cyclic orbifold with $b_1=0$ and $b_2=3$. Let $P$ be the collection of singular points.
Suppose that the ramification locus consists of a collection of nice curves $D_i^{(k)}$, $i=1,2,3$, $1\leq k\leq K$,
such that
 \begin{equation}\label{eqn:varepsilonk}
  \varepsilon^{(k)}=( D_1^{(k)},D_2^{(k)},D_3^{(k)})
  \end{equation}
are orthogonal bases for $H_2(Y,\QQ)$, formed by curves which are disjoint. 
As $Y$ is a K\"ahler orbifold, $h^{1,1}(Y)=b_2(Y)=3$, because the homology is spanned by complex curves.
The intersection form of $H^2(Y,\RR)$ is of signature $(1,2)$. So we can order the curves so that
$(D_1^{(k)})^2=m_1^{(k)}>0$, $(D_2^{(k)})^2=-m_2^{(k)}<0$, $(D_3^{(k)})^2=-m_3^{(k)}<0$.
The genera are $g_1^{(k)}=g(D_1^{(k)}),g_2^{(k)}=g(D_2^{(k)}),g_3^{(k)}=g(D_3^{(k)}) \geq 1$.

For $k\neq l$, it may happen that $D_i^{(k)}=D_j^{(l)}$ in which case $g_i^{(k)}=g_j^{(l)}$,
and also either $i=j=1$ or $i,j\in \{2,3\}$ (since the self-intersection coincides).
On the other hand, if the curves are distinct then it must be $D_i^{(k)} \cdot D_j^{(l)} \geq 0$.

\begin{definition}\label{lem:proj-equiv}
Let $\varepsilon= \big(D_1,D_2, D_3\big)$, $\varepsilon'= \big(D'_1,D_2', D'_3\big)$ be two bases
from the above list. We write $[\varepsilon]=[\varepsilon']$ 
if the elements are proportional, that is up to reordering, $D_i'=\lambda_i D_i$ with $\lambda_i>0$. 
We say that the bases are \emph{proj-equivalent}.
\end{definition}

Note that if $[\varepsilon]=[\varepsilon']$ then, 
by the discussion above, we have that $D_2=D_2'$ and $D_3=D_3'$. 

\medskip

Let $K$ be the orbifold canonical class of $Y$. Let  $\varepsilon= ( D_1,D_2, D_3 )$ be one of the basis
provided above. Then we write $K=\sum a_i D_i$. We have the orbifold adjunction equality
 $$
 K\cdot D+D^2=-e_{\orb}(D),
 $$ 
for a smooth orbifold (nice) curve $D\subset Y$. 
As $b^+_2=1$, we have that $D_{1}^2=m_1>0$, $D_i^2=-m_i<0$ for $i= 2,3$, where $m_i\in\QQ$. Let $g_i$ 
be the genus of $D_i$. Let 
 $$
 \chi_i=-e_{\orb}(D_i)=2g_i-2+\sum_{p\in D_i} \left(1-\frac{1}{d_p}\right),
 $$
where $d_p$ is the
order of the singular point $p\in D_i$. Then $\chi_i\geq 2g_i-2$. Using the adjunction formula, then 
$a_1=(\chi_1-m_1)/m_1$, $a_i=-(\chi_i+m_i)/m_i$ for $i=2,3$, so
 \begin{equation}\label{eqn:K}
  K=\frac{\chi_1-m_1}{m_1} D_1 - \frac{\chi_2+m_2}{m_2} D_2 - \frac{\chi_3+m_3}{m_3} D_3\, .
  \end{equation}
Note that $\chi_i+m_i>0$ for  $i= 2,3$.

By Lemma \ref{lem:Sigma}, $K+D_2$ is effective. But
 $$
  K+D_2=\frac{\chi_1-m_1}{m_1} D_1 - \frac{\chi_2}{m_2} D_2 - \frac{\chi_3+m_3}{m_3} D_3\, .
  $$
If $m_1\geq \chi_1$, then this is anti-effective, which is a contradiction. Hence we always have
 \begin{equation}\label{eqn:chi1}
 0< m_1<\chi_1.
 \end{equation}

\begin{lemma}\label{lem:1-a}
Let $\varepsilon= \big(D_1,D_2, D_3\big)$, $\varepsilon'= \big(D'_1,D_2', D'_3\big)$ be two bases.
If $D_1'=\lambda_1 D_1$, then $[\varepsilon]=[\varepsilon']$. In particular, $D_2=D_2'$ and $D_3=D_3'$.
\end{lemma}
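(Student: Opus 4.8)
The plan is to first pin $D_2'$ and $D_3'$ into the negative-definite plane $V=\langle D_2,D_3\rangle$, and then to exploit the rigidity of an orthogonal basis of $V$ made of effective curves. Throughout, recall that $Y$ is K\"ahler, so a K\"ahler class $\omega$ pairs strictly positively with every effective curve, and that two \emph{distinct} curves among the bases satisfy $D_i^{(k)}\cdot D_j^{(l)}\ge 0$.

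First I would observe $\lambda_1\neq 0$, since both $D_1$ and $D_1'$ are nonzero classes. Because $\varepsilon'$ is an orthogonal basis of disjoint curves, $D_2'\cdot D_1'=0$, and $D_1'=\lambda_1 D_1$ gives $D_2'\cdot D_1=0$; likewise $D_3'\cdot D_1=0$. Expanding $D_2'=\alpha_1 D_1+\alpha_2 D_2+\alpha_3 D_3$ in the basis $\varepsilon$ and pairing with $D_1$ yields $\alpha_1 m_1=0$, whence $\alpha_1=0$ as $D_1^2=m_1>0$. Thus both $D_2'$ and $D_3'$ lie in $V$.

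The core step is to show that each of $D_2',D_3'$ coincides with $D_2$ or $D_3$. Write $D_2'=\alpha_2 D_2+\alpha_3 D_3$. Pairing with $\omega$ gives $\alpha_2(\omega\cdot D_2)+\alpha_3(\omega\cdot D_3)=\omega\cdot D_2'>0$, so $\alpha_2,\alpha_3$ cannot both be $\le 0$. Now suppose both $\alpha_2,\alpha_3$ are nonzero; then $D_2'$ is proportional to neither $D_2$ nor $D_3$, hence is a curve distinct from both, so the nonnegativity rule forces $-\alpha_2 m_2=D_2'\cdot D_2\ge 0$ and $-\alpha_3 m_3=D_2'\cdot D_3\ge 0$, i.e.\ $\alpha_2,\alpha_3\le 0$, contradicting the $\omega$-positivity. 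Therefore one coefficient vanishes, say $\alpha_3=0$ and $D_2'=\alpha_2 D_2$; positivity against $\omega$ forces $\alpha_2>0$, and if $D_2'\neq D_2$ as curves the distinctness rule would give $D_2'\cdot D_2=-\alpha_2 m_2\ge 0$, which is impossible. Hence $D_2'=D_2$ (and symmetrically the other option is $D_2'=D_3$). The same argument applies to $D_3'$. Since $D_2'\cdot D_3'=0$ while $D_2^2,D_3^2<0$, the two cannot land on the same curve, so $\{D_2',D_3'\}=\{D_2,D_3\}$.

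Combining this with $D_1'=\lambda_1 D_1$ shows the two bases are proportional up to reordering and rescaling, that is $[\varepsilon]=[\varepsilon']$ in the sense of Definition \ref{lem:proj-equiv}; the final assertion $D_2=D_2'$, $D_3=D_3'$ is then exactly the observation recorded after that definition. The step I expect to be the main obstacle is the core step above: one must carefully distinguish equality \emph{as homology classes} from equality \emph{as curves}, the point being that a curve positively proportional to $D_i$ but distinct from it would have negative intersection with $D_i$, which the distinct-curves rule forbids; the K\"ahler hypothesis (positivity of $\omega$ on effective curves) is precisely what excludes the anti-effective combinations and makes the dichotomy clean.
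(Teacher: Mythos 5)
Your proof is correct and follows essentially the same route as the paper: you restrict to the negative-definite plane $\la D_2,D_3\ra=\la D_1\ra^\perp$, and in the non-proportional case use the rule that distinct curves intersect non-negatively to force non-positive coefficients, contradicting effectiveness (which the paper phrases as ``effective vs.\ anti-effective'' and you make explicit via pairing with a K\"ahler class). The only difference is organizational: you argue curve-by-curve and obtain the equalities $D_2'=D_2$, $D_3'=D_3$ directly, whereas the paper first establishes proportionality and then invokes the observation following Definition \ref{lem:proj-equiv}, but the underlying mechanism is identical.
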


\begin{proof}
We restrict to $V=\la D_1\ra^\perp\subset H_2(Y,\RR)$, which is a vector space with a (negative) definite scalar product.
If $D_2=\lambda_2 D_2'$ (up to reordering), then it must be $D_3=\lambda_3 D_3'$ and $[\varepsilon]=[\varepsilon']$.
If $D_2,D_3$ are not proportional to $D_2',D_3'$, then $D_i\cdot D_j'\geq0$ for $i,j\in \{2,3\}$. 
If we take coordinates on $V$ so that $\{D_2, D_3\}$ is the standard basis, then 
$D_l' = \sum -a_{jl} D_j$ with $a_{jl}\geq 0$.
This is impossible since the first is effective and the second anti-effective.
\end{proof}

\begin{lemma}\label{lem:1-b}
Let $\varepsilon= \big(D_1,D_2, D_3\big)$, $\varepsilon'= \big(D'_1,D_2', D'_3\big)$,
$\varepsilon''= \big(D''_1,D_2'', D''_3\big)$ be three bases.
If $D_3, D_3',D_3''$ are proportional, then two of the bases are proj-equivalent.
\end{lemma}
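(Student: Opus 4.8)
The plan is to reduce the statement to linear algebra inside a single hyperbolic plane and then run a pigeonhole argument on the two components of its spacelike cone.

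First I would exploit the hypothesis that $D_3,D_3',D_3''$ are proportional: they span a common line $L\subset H_2(Y,\RR)$, and $L$ is negative definite since $(D_3)^2<0$. Because each of $\varepsilon=(D_1,D_2,D_3)$, $\varepsilon'$, $\varepsilon''$ is an \emph{orthogonal} basis, the curves $D_1,D_2$ (and likewise the primed and doubly-primed ones) are orthogonal to $D_3$, hence all lie in $V:=L^\perp$. Since the full intersection form has signature $(1,2)$ and $L$ is negative definite, $V$ is a plane of signature $(1,1)$. Thus all six classes $D_1^{(k)},D_2^{(k)}$, $k=1,2,3$, live in one hyperbolic plane $V$, and the three classes $D_2,D_2',D_2''$ all have negative square there.

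Next I would use the standard structure of a $(1,1)$-plane. Choosing an isotropic basis $e_1,e_2$ with $e_1^2=e_2^2=0$ and $\la e_1,e_2\ra=1$, a vector $x e_1+y e_2$ has square $2xy$, so $\{v\in V:v^2<0\}$ has exactly two connected components, given by $x>0>y$ and $x<0<y$, and two vectors lying in the \emph{same} component always have strictly negative mutual product. As $D_2,D_2',D_2''$ are three vectors of negative square in $V$, by the pigeonhole principle two of them, say $D_2^{(i)}$ and $D_2^{(j)}$, lie in the same component, whence $D_2^{(i)}\cdot D_2^{(j)}<0$. On the other hand, two distinct curves from our list satisfy $D_2^{(i)}\cdot D_2^{(j)}\geq 0$; therefore the two curves must coincide, so $D_2^{(i)}=D_2^{(j)}$ as classes.

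Finally, with $D_2^{(i)}=D_2^{(j)}$ and $D_3^{(i)}\propto D_3^{(j)}$, both $D_1^{(i)}$ and $D_1^{(j)}$ are orthogonal to the plane $\la D_2^{(i)},D_3^{(i)}\ra$, which is negative definite (hence nondegenerate) and $2$-dimensional; its orthogonal complement in $H_2(Y,\RR)$ is $1$-dimensional, forcing $D_1^{(i)}$ and $D_1^{(j)}$ to be proportional. By Lemma \ref{lem:1-a} this gives $[\varepsilon^{(i)}]=[\varepsilon^{(j)}]$, i.e.\ two of the three basis are proj-equivalent. I expect the only real content to lie in two places: the reduction to the single hyperbolic plane $V$ (which is what makes the pigeonhole available and crucially uses that the $D_3^{(k)}$ are proportional), and the passage from the linear-algebra inequality $D_2^{(i)}\cdot D_2^{(j)}<0$ to a genuine equality of curves, which combines the sign computation in $V$ with the nonnegativity of intersections of distinct curves; the remaining steps are routine.
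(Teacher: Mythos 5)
Your proof is correct, and it reaches the conclusion by a route that differs in mechanism from the paper's, though both arguments live in the same place: the signature-$(1,1)$ plane $V=\la D_3\ra^{\perp}$ (which in the paper appears as $W=\la D_1,D_2\ra$, the same subspace), and both ultimately hinge on the fact that distinct irreducible curves satisfy $D\cdot D'\geq 0$. The paper's proof is a coordinate computation anchored to the first basis: it takes the orthonormal basis $e_1=D_1/\sqrt{m_1}$, $e_2=D_2/\sqrt{m_2}$, writes $D_1'=(a_1,-b_1)$, $D_2'=(a_2,-b_2)$ with nonnegative entries (using non-negativity against $\varepsilon$), uses orthogonality \emph{within} $\varepsilon'$ to get $D_2'=\mu(b_1,-a_1)$, and similarly for $\varepsilon''$; then $D_1'\cdot D_1''=a_1c_1-b_1d_1$ and $D_2'\cdot D_2''=\mu\mu'(b_1d_1-a_1c_1)$ are opposite in sign, so both being $\geq 0$ forces $D_1''\perp D_1'$, i.e.\ $D_1''\propto D_2'$, which is absurd since their self-intersections have opposite signs. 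Your argument instead treats the three bases symmetrically: you apply the pigeonhole principle to the three negative classes $D_2,D_2',D_2''$ and the two components of the negative cone of $V$, use that same-component negative vectors have strictly negative product to force two of these curves to coincide, and then finish with a dimension count on the orthocomplement of the negative-definite plane $\la D_2,D_3\ra$ (or Lemma \ref{lem:1-a}). What your version buys is a cleaner structure: no choice of adapted coordinates, no case analysis of the form ``if $D_1'=D_1$ or $D_2'=D_2$ then done,'' and a reusable statement about the two-component structure of the negative cone in a hyperbolic plane; what the paper's version buys is that the contradiction is extracted directly from the positive curves by an explicit two-line computation, without needing to first identify two of the negative curves. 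Both are complete proofs of the lemma.
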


\begin{proof}
Let $W=\la D_1,D_2\ra$, which is a vector space of dimension $2$ and signature $(1,1)$. Take an
orthonormal basis $\{e_1,e_2\}$ with $e_1=D_1/\sqrt{m_1}$, $e_2=D_2/\sqrt{m_2}$. 
If either $D_1'=D_1$ or $D_2'=D_2$ then $[\varepsilon']=[\varepsilon]$. 
Otherwise $D_1'\cdot D_1, D_1'\cdot D_2,D_2'\cdot D_1,D_2'\cdot D_2 \geq 0$.
In the above basis $D_1'=(a_1,-b_1), D_2'=(a_2,-b_2)$, with $a_j,b_j\geq 0$.
As they are orthogonal, $a_1a_2-b_1b_2=0$, hence $D_2'=\mu (b_1,-a_1)$ with $\mu>0$.
In an analogous manner, $D_1''=(c_1,-d_1), D_2''=\mu' (d_1,-c_1)$, with $c_1,d_1\geq 0$, $\mu'>0$.
Then $D_1'\cdot D_1''=a_1c_1-b_1d_1\geq 0$
and $D_2'\cdot D_2''=\mu\mu'(b_1d_1-a_1c_1)\geq 0$. So it must be $D_1'' \perp D_1'$, and hence
$D_1''$ is proportional to $D_2'$.
\end{proof}

\begin{definition}\label{def:good}
We call a curve $D_i$ \emph{good} if it does not pass through any singular point. 
We call a basis  $\varepsilon=( D_1,D_2,D_3)$ 
{good} if the three curves $D_1,D_2,D_3$ are good. In this case $m_1=D_1^2$, $m_2=-D_2^2$ and $m_3=-D_3^2$
 are positive integers. Also $\chi_i=2g_i-2 \in \ZZ$, and their homology classes lie in $H_2(X-P,\ZZ) \cong H^2(X,\ZZ)$.
\end{definition}

Fix some $N_0>0$ to be determined later.
Now we focus on bases of curves with genera $\{g_n=9n^2+1,g_a=9a^2+1,10\}$ for  $1<a\leq N_0$, 
and $N_0<n \leq N$. For each $a\in [2,N_0]$, we take a primer number $n_a\in [N_0+1,N]$. In particular $10<g_a<g_{n_a}$.
We require the numbers $n_a\neq n_{a'}$ for $a\neq a'$. 
We say that a number $a$ is \emph{bad} if the basis $(D_1,D_2,D_3)$ of genera $\{g_{n_a},g_a,10\}$ is not good.

 \begin{proposition} \label{prop:some-a}
 Let $\cota_0$ given in Corollary \ref{cor:Pbounded}. Then at most there are $4\cota_0$ bad numbers $a$.
 \end{proposition}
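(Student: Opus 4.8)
The plan is to bound the number of bad $a$ by charging each one to a \emph{non-good curve} (an isotropy surface of $Y$ that meets $P$), and then controlling both the total number of such curves and the number of bad $a$ charged to each. First I would estimate the set $\cF$ of distinct non-good curves. Since $Y$ is a cyclic orbifold, the local model $\CC^2/\ZZ_d$ at a singular point (Section \ref{subsec:orbifold}) shows that at most two isotropy surfaces pass through it, namely the two coordinate axes. As every curve occurring in any of our basis is an isotropy surface, and every non-good curve meets at least one point of $P$, a count of incidences gives $\#\cF \le 2\,\#P \le 2T_0$, where the last inequality is Corollary \ref{cor:Pbounded}.

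Next, for each bad $a$ I fix a witnessing non-good basis $\varepsilon_a$ of genus $\{g_n,g_a,10\}$ with $n\in\cS_a$, together with a non-good curve $C_a\in\cF$ lying in it; this defines a map $a\mapsto C_a$ with values in $\cF$. The heart of the argument is to prove that every fiber of this map has at most two elements, since then the number of bad $a$ is at most $2\,\#\cF\le 4T_0$. The case $g(C_a)>10$ is immediate: then $C_a$ has genus either $g_a$, which determines $a$ because the $g_a$ are pairwise distinct, or $g_n$ for the index $n\in\cS_a$ actually used, which determines $n$ and hence $a$ because the sets $\cS_a$ are pairwise disjoint. In either situation at most one bad $a$ maps to $C_a$.

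The main obstacle is the case $g(C_a)=10$, where a single genus-$10$ curve can appear in the basis of many different $a$, so genus alone gives no control. Here I would invoke the proportionality lemmas, using that the intersection form has signature $(1,2)$, so that the sign of $C_a^2$ is an intrinsic property of $C_a$. If $C_a^2>0$, then $C_a$ is the positive member $D_1$ of every basis containing it, and Lemma \ref{lem:1-a} forces any two basis sharing it to be proj-equivalent; hence their remaining curves coincide, so the genus multisets $\{g_a,g_n\}$ and $\{g_{a'},g_{n'}\}$ agree. These multisets are disjoint for $a\ne a'$, because $g_a<g_n$, the $g_a$ are distinct, and the $\cS_a$ are disjoint, so this forces $a=a'$ and the fiber has a single element. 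If $C_a^2<0$, then $C_a$ is one of the two negative members of each basis containing it; three distinct bad $a$ in the fiber would give three basis in which this curve is proportional, whence Lemma \ref{lem:1-b} would make two of them proj-equivalent, again forcing equality of a negative curve and hence $a=a'$ by the same disjointness of genus multisets---a contradiction. Thus the fiber has at most two elements in every case, and combining this with $\#\cF\le 2T_0$ gives at most $4T_0$ bad numbers $a$, as required.
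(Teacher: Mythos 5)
Your proof is correct, and it reorganizes the counting in a genuinely different (if closely related) way. The paper charges each bad number $a$ to a singular \emph{point}: it fixes $p\in P$, supposes five witnessing basis have a curve through $p$, uses Lemma \ref{lem:1-b} plus the genus disjointness of the $\cS_a$ to extract three basis whose nine curves are pairwise distinct, and then contradicts the fact that at most two nice curves pass through $p$; this gives at most $4$ bad numbers per point, hence $4T_0$ in total. You instead charge each bad $a$ to a non-good \emph{curve}: you first bound the number of such curves by $2\#P\le 2T_0$ (using the same two-curves-per-point fact), and then show each curve witnesses at most $2$ bad numbers, via a case analysis on its genus and the sign of its self-intersection (genus $>10$ gives at most one by genus disjointness; genus $10$ with positive square gives at most one by Lemma \ref{lem:1-a}; genus $10$ with negative square gives at most two by Lemma \ref{lem:1-b}). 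Both arguments rest on exactly the same ingredients --- Corollary \ref{cor:Pbounded}, the incidence bound at singular points, the proportionality Lemmas \ref{lem:1-a} and \ref{lem:1-b}, and the disjointness of the genus sets --- and both yield the factor $4=2\times 2$; the paper's version is shorter, while yours factors the constant transparently (two curves per point times two bad numbers per curve) and pinpoints that the only non-good curves that can serve two distinct bad numbers are genus-$10$ curves of negative square, which is slightly finer information than the paper extracts.
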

 
\begin{proof}
  By Corollary \ref{cor:Pbounded}, the number of 
  orbifold points is $\# P\leq \cota_0$. At an orbifold point,
  there are at most $2$ (nice) curves through it. 
  Let $\varepsilon_{{n_a}a}$ be the basis associated to the genera $\{g_{n_a},g_a,10\}$. For any bad $a$, 
  $\varepsilon_{n_aa}$ contains a curve through a point of $P$.
  Let us see that a point $p\in P$ can be at most in $4$ bases $\varepsilon_{n_aa}$. Therefore the
  number of bad numbers $a$ is $\leq 4\cota_0$.
  
 To check the assertion, fix a point $p\in P$ and 
 suppose that $a_1,a_2,a_3,a_4,a_5$ are bad with curves through $p$. 
 Let $\varepsilon_{n_{a_i}a_i}=(D_1^{(i)},D_2^{(i)},D_3^{(i)})$, $1\leq i\leq 5$.
 Note that $g_{a_i}\neq g_{a_j}$ and $g_{n_{a_i}}\neq g_{n_{a_j}}$ for $i\neq j$, since 
 $n_{a_i}\neq n_{a_j}$, and $g_{a_i}\neq g_{n_{a_k}}$ for $i,k$ since $a_i\leq N_0<n_{a_k}$.
By Lemma \ref{lem:1-b}, there must be three different curves among $D_3^{(i)}$. Reordering
we can suppose this for $i=1,2,3$. Then all curves in $\varepsilon_{n_{a_i}a_i}$, $i=1,2,3$ are different.
But it cannot be more than two curves through $p$, a contradiction.
\end{proof}

Taking $N_0=4\cota_0+1$, this guarantees the existence of some $a$ which is not bad.
Let now
 \begin{equation}\label{eqn:boundN}
 N_1=\max \left( n_a \, | \, a\in [2,N_0] \right).
 \end{equation}
This is a universal quantity, i.e.\ independent of $Y$.

\section{Universal geometric bounds}\label{sec:6}

Now we want to get universal bounds on some geometric quantities associated to an 
orbifold $Y$ satisfying Conditions \ref{condi}. As before, let $\pi: \tilde Y\to Y$ be the minimal resolution of singularities,
and let $\tilde K$ and $K$ be the canonical divisors of $\tilde Y$ and $Y$, respectively.
In this section, we use $N_1$ from 
(\ref{eqn:boundN}). 

\begin{lemma} \label{lem:boundK2-above}
There is a universal $\cota_1$ so that $\tilde K^2\leq K^2 \leq \cota_1$.
\end{lemma}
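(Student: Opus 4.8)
The plan is to prove the two inequalities separately: $\tilde K^2 \le K^2$ comes from a discrepancy computation at the exceptional divisors, while $K^2 \le T_1$ comes from the explicit expression (\ref{eqn:K}) for $K$ in an orthogonal basis, combined with the observation that in this section \emph{all} the genera in play are bounded by a universal constant.

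For the first inequality I would use the ramification formula (\ref{eqn:tildeK}), namely $\tilde K = \pi^* K - \sum_i \lambda_i C_i$ with $\lambda_i \ge 0$ (the discrepancies of cyclic quotient, i.e.\ log terminal, singularities are $\ge -1$). Since $\pi^* K\cdot C_i = K\cdot \pi_* C_i = 0$ for every exceptional curve $C_i$, squaring gives $\tilde K^2 = K^2 + \big(\sum_i \lambda_i C_i\big)^2$. The exceptional curves over each singular point span a negative definite sublattice of $H_2(\tilde Y,\QQ)$, so $\big(\sum_i \lambda_i C_i\big)^2 \le 0$, and therefore $\tilde K^2 \le K^2$.

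For the upper bound I would work with a single \emph{good} basis, which exists by Proposition \ref{prop:some-a} once $N_0 = 4T_0+1$, since then some number $a$ is not bad. For such a basis $\varepsilon=(D_1,D_2,D_3)$ with $D_1^2=m_1>0$, $D_2^2=-m_2<0$, $D_3^2=-m_3<0$, all $m_i$ are positive \emph{integers} and $\chi_i = 2g_i-2 \in \ZZ$. Using (\ref{eqn:K}) and the orthogonality of the basis,
$$
K^2 = \frac{(\chi_1-m_1)^2}{m_1} - \frac{(\chi_2+m_2)^2}{m_2} - \frac{(\chi_3+m_3)^2}{m_3} \le \frac{(\chi_1-m_1)^2}{m_1}.
$$
By (\ref{eqn:chi1}) we have $0<m_1<\chi_1$, and since $m\mapsto (\chi_1-m)^2/m = \chi_1^2/m - 2\chi_1 + m$ is decreasing on $(0,\chi_1)$, its maximum over the integers $m_1\in\{1,\dots,\chi_1-1\}$ is attained at $m_1=1$. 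Hence $K^2 \le (\chi_1-1)^2 = (2g_1-3)^2$, where $g_1$ is the genus of the positive-self-intersection curve.

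The crucial point, and the only place where one must be careful, is that $g_1$ is bounded by a \emph{universal} constant. In this section $N_1=1$, so the genera occurring are $\{g_n,g_a,10\}$ with $4\le a\le N_0$ and $n\in \cS_a^{(1)}$, and by (\ref{eqn:boundN}) the quantity $N(1)=\sup\big(\cup_a \cS_a^{(1)}\big)$ is universal. Thus every genus in play is at most $G:=N(1)^2+1$, whichever of the three curves happens to be the positive one, and setting $T_1:=(2G-3)^2$ finishes the argument. I do not expect a serious obstacle once the universality of $N(1)$ and the existence of a good basis are invoked; the subtle point — which is exactly why Proposition \ref{prop:some-a} is needed here — is that the basis must be \emph{good}, so that $m_1$ is a positive integer. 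For a curve passing through singular points $m_1$ is only a (possibly small) positive rational, which would make $(\chi_1-m_1)^2/m_1$ large and destroy the bound.
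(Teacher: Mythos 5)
Your proof is correct and takes essentially the same route as the paper's: the lower bound $\tilde K^2\le K^2$ read off from (\ref{eqn:tildeK}), and the upper bound obtained from a good basis supplied by Proposition \ref{prop:some-a}, the expression (\ref{eqn:K}), dropping the negative terms, and using (\ref{eqn:chi1}) together with the integrality of $m_1$ and the universality of $N(1)$. If anything your write-up is slightly more careful than the paper's (which loosely states $g_n\le N(1)$ and the maximum $(2g_n-1)^2$, where your $g_n\le N(1)^2+1$ and $(2g_1-3)^2$ are the precise versions), but the underlying argument is identical.
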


\begin{proof}
The first equality follows from (\ref{eqn:tildeK}). Next, by Proposition \ref{prop:some-a}, there
is some $a\leq N_0$ which is not bad. This means that the curves in the basis 
$\varepsilon_{n_aa}=(D_1,D_2,D_3)$, with genera
$\{g_{n_a},g_a,10\}$, do not pass through singular points. As $n_a \leq N_1$, we have $g_{n_a}=9n_a^2+1\leq 9N_1^2+1$.
By using (\ref{eqn:K}), we have 
 $$
K^2=\frac{(2g_{n_a}-2-m_1)^2}{m_1}- \frac{(2g_a-2+m_2)^2}{m_2} - \frac{(18+m_3)^2}{m_3} \leq \frac{(2g_{n_a}-2-m_1)^2}{m_1} \, ,
 $$
where we have assumed that the curve $D_1$ is the positive one (the other cases can be done similarly).
As $0<m_1<2g_{n_a}-2$ by (\ref{eqn:chi1}), we can compute the maximum value of the 
expression above to be $(2g_{n_a}-3)^2\leq (18N_1^2-1)^2$. So $K^2$ is universally bounded.
\end{proof}

\begin{lemma}\label{lem:boundK2-below}
There is a universal $\cota_2$ so that $K^2\geq \tilde K^2 \geq -\cota_2$.
\end{lemma}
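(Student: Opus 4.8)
The plan is to prove the two inequalities separately; the first is formal, and the lower bound is the real content. For $K^2\ge \tilde K^2$, I would use \eqref{eqn:tildeK}: $\tilde K=\pi^*K-\sum_i\lambda_iC_i$ with $\lambda_i\ge 0$ and $C_i$ the exceptional curves of $\pi$. Since $\pi^*K\cdot C_i=K\cdot\pi_*C_i=0$ and $(\pi^*K)^2=K^2$, expanding gives $\tilde K^2=K^2+\big(\sum_i\lambda_iC_i\big)^2$. The intersection form on the exceptional locus of a resolution of normal surface singularities is negative definite, so $\big(\sum_i\lambda_iC_i\big)^2\le 0$, and hence $\tilde K^2\le K^2$.

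For the lower bound I would pass to Noether's formula on the smooth projective surface $\tilde Y$. As observed in the proof of Lemma \ref{lem:Sigma}, $b_1(\tilde Y)=0$ and $h^{0,2}(\tilde Y)=0$ (because $H^2(\tilde Y,\CC)$ is spanned by algebraic curves), so $\chi(\cO_{\tilde Y})=1$ and Noether gives $\tilde K^2=12\,\chi(\cO_{\tilde Y})-e(\tilde Y)=12-e(\tilde Y)$. The underlying space of $Y$ is a rational homology $4$-manifold with $b_1=0$, $b_2=3$, so by Poincar\'e duality $e(Y)=5$. The minimal resolution replaces each $p\in P$ by a chain $E_p=C_1\cup\dots\cup C_{l_p}$ of $l_p$ rational curves, whence $e(\tilde Y)=e(Y)+\sum_{p\in P}l_p=5+\sum_{p\in P}l_p$ and therefore $\tilde K^2=7-\sum_{p\in P}l_p$. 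Thus $\tilde K^2\ge -T_2$ is \emph{equivalent} to a universal upper bound on the total number $\sum_{p\in P}l_p$ of exceptional curves.

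It remains to bound $\sum_{p\in P}l_p$, and this step is the main obstacle. By Corollary \ref{cor:Pbounded} we have $\#P\le T_0$, so it suffices to bound each chain length $l_p$. Writing the cyclic singularity at $p$ via its Hirzebruch--Jung continued fraction $d_p/r_p=[b_1,\dots,b_{l_p}]$ with $b_j\ge 2$, an elementary induction on the convergents shows that the numerator satisfies $d_p\ge l_p+1$, so $l_p\le d_p-1$ and everything reduces to a universal bound on the orders $d_p$. Here the upper bound $K^2\le T_1$ from Lemma \ref{lem:boundK2-above} already handles the singularities of large \emph{canonical defect}: since $K^2=\tilde K^2+\big(-\big(\sum_i\lambda_iC_i\big)^2\big)$ and the defect of a quotient such as $\tfrac1d(1,1)$ grows like $d$, those $d_p$ are forced to be bounded. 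The genuinely hard case is that of long chains of $(-2)$-curves, where the defect vanishes and $K^2$ stays small, so the upper bound on $K^2$ is silent. For these I would bound $d_p$ from the fixed topology of $M$: the local Seifert model over $p$ has the lens space $S^3/\ZZ_{d_p}$ as its link, and its contribution to the fixed torsion of $H_2(M,\ZZ)$, together with the constraint that $\pi_1^{\orb}(Y)$ be trivial, should force $d_p$ to divide a fixed integer; alternatively one argues directly on $\tilde Y$ (a surface with $p_g=q=0$ carrying three disjoint curves of bounded genus that span $H_2(Y,\QQ)$) that arbitrarily long $(-2)$-chains disjoint from those curves cannot occur. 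Either route yields a universal $d_p\le D_0$, and then $\sum_{p\in P}l_p\le T_0(D_0-1)$, so one may take $T_2=T_0(D_0-1)-7$.
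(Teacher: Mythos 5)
Your first inequality and your Noether reformulation are both correct: $K^2\geq \tilde K^2$ follows from (\ref{eqn:tildeK}) and negative definiteness of the exceptional lattice, and since $\chi(\cO_{\tilde Y})=1$ and $e(Y)=5$, one indeed has $\tilde K^2=7-\sum_{p\in P}l_p$, so the lemma is equivalent to a universal bound on the total number of exceptional curves. But that bound is precisely the hard content of the lemma, and your argument for it does not close. The ``canonical defect'' step is circular: the relation $K^2-\tilde K^2=-\bigl(\sum_i\lambda_iC_i\bigr)^2\geq 0$ combined with $K^2\leq T_1$ from Lemma \ref{lem:boundK2-above} only yields an \emph{upper} bound on $\tilde K^2$; to conclude that a singularity of large order forces a contradiction you would need exactly the lower bound on $\tilde K^2$ that you are trying to prove. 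And, as you yourself note, for chains of $(-2)$-curves ($A_n$ singularities) the defect vanishes identically, so this mechanism says nothing there, for any $d_p$.

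The two fallback routes are not proofs and, in the first case, the claim is false. By Proposition \ref{thm:16MRT}, $H_2(M,\ZZ)=\ZZ^k\oplus\bigl(\oplus_i\ZZ_{m_i}^{2g_i}\bigr)$: the torsion of $H_2(M,\ZZ)$ records only the isotropy \emph{surfaces} and their multiplicities, so the orders $d_p$ of isolated singular points of $Y$ leave no trace whatsoever in $H_2(M,\ZZ)$ and cannot be bounded by it. Nor does triviality of $\pi_1^{\orb}(Y)$ bound them: weighted projective planes have trivial orbifold fundamental group and cyclic singular points of arbitrarily large order. Your route (b) is simply a restatement of the goal. Note that the paper's own proof avoids this issue entirely: it takes a \emph{good} curve $D_1$ (missing $P$) of universally bounded genus $g_n\leq N(1)$ from (\ref{eqn:boundN}), uses $h^0(\tilde K+D_1)=g_n>0$, shows that the base locus $Z$ of $|\tilde K+D_1|$ is supported on the $D_i$ and the $C_j$ so that $\tilde K\cdot Z\geq 0$ and $(\tilde K+D_1)^2\geq F^2\geq 0$ for the free part $F$, and then concludes by adjunction $\tilde K^2\geq 4-4g_n+m_1\geq 4-4N(1)$. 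In the paper the bound on $e(\tilde Y)$ --- i.e.\ on your $\sum_p l_p$ --- is the \emph{next} lemma ($T_3$), deduced from this one via Noether's formula; your plan runs that logical dependence backwards, which is why the missing step cannot be filled by anything already available at this point (Corollary \ref{cor:Pbounded} bounds only $\#P$, not the orders, and Proposition \ref{prop:boundC2} comes later and itself depends on $T_2$).
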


\begin{proof}
We already know that $K^2\geq \tilde K^2$. Now take $a\leq N_0$ which is not bad,
and let $\{g_{n_a},g_a,10\}$ be the genera of a good basis of curves $(D_1,D_2,D_3)$. As they
do not pass through singular points, we denote the proper transforms under the 
resolution map $\pi:\tilde Y\to Y$ by the same letters $D_1,D_2,D_3$.
Recall that we denote $C_j$ the exceptional divisors.

To bound $\tilde K^2$, we note that 
$H^0(\tilde K+D_1) \cong H^0(D_1, K_{D_1})=\CC^{g_{n_a}}$. We assume that $D_1$ is the positive curve,
the other cases are similar. Write the linear system
 $|\tilde K+D_1|= Z+|F|$, where $Z$ is the base-point locus and $F$ is a free divisor. Then $Z\cdot D_1=0$,
 since $H^0(D_1,K_{D_1})$ is base-point free.
 Write the divisor $Z=T+\sum a_i D_i+\sum b_j C_j$, for $a_i\geq 0, b_j\geq 0$, and $T\geq 0$ not containing $D_i$ and $C_j$. 
 As $Z\cdot D_1=0$, we have
 $a_1=0$ and $T\cdot D_1=0$. In the rational equivalence class, we have 
 $T\equiv \sum \alpha_i D_i+\sum \beta_j C_j$. Again $\alpha_1=0$ and $\alpha_i\leq 0$ because $T\cdot D_i \geq 0$, $i=2,3$.
 Also $T\cdot C_j\geq 0$ for all $j$, implies that $\beta_j\leq 0$ for all $j$. This implies that $T$ is anti-effective and
 effective, hence $T=0$. Thus $Z=\sum a_i D_i+\sum b_j C_j$, hence $\tilde K \cdot Z\geq 0$ because 
 $\tilde K \cdot D_i\geq 0$ and $\tilde K \cdot C_j\geq 0$. Next
  $$
  (\tilde K+D_1 -Z)^2 =F^2 \geq 0.
  $$
So $ (\tilde K+D_1)^2 -2  (\tilde K+D_1)\cdot Z+Z^2 \geq 0$, and hence $ (\tilde K+D_1)^2 \geq 0$.
This reads $\tilde K^2 + 2(2g_{n_a}-2-m_1)+m_1 \geq 0$, whence 
$\tilde K^2\geq 4-4g_{n_a}+m_1\geq 5-4g_{n_a} \geq 1-36N_1^2$, using that $g_{n_a}=9n_a^2+1\leq 9N_1^2+1$.
\end{proof}

\begin{lemma}
 There is a universal $\cota_3$ so that $e(\tilde Y) \leq \cota_3$. 
\end{lemma}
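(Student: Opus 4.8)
The plan is to apply Noether's formula on the smooth surface $\tilde Y$ and combine it with the lower bound on $\tilde K^2$ already established in Lemma \ref{lem:boundK2-below}. Since $\tilde Y$ is a compact complex surface, Hirzebruch--Riemann--Roch for the structure sheaf gives
$$
\chi(\cO_{\tilde Y})=\frac{1}{12}\left(\tilde K^2 + e(\tilde Y)\right),
$$
so that $e(\tilde Y)=12\,\chi(\cO_{\tilde Y})-\tilde K^2$. The task therefore reduces to computing $\chi(\cO_{\tilde Y})$ and then invoking $\tilde K^2\geq -T_2$.

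First I would compute $\chi(\cO_{\tilde Y})=1-h^{0,1}(\tilde Y)+h^{0,2}(\tilde Y)$. As already noted in the proof of Lemma \ref{lem:Sigma}, one has $b_1(\tilde Y)=0$, whence $h^{0,1}(\tilde Y)=0$; and $h^{0,2}(\tilde Y)=0$ because $H^2(\tilde Y,\CC)$ is spanned by algebraic $(1,1)$-classes (the proper transforms of the $D_i$ together with the exceptional curves $C_j$ of the resolution), forcing $h^{1,1}(\tilde Y)=b_2(\tilde Y)$ and hence $h^{2,0}(\tilde Y)=h^{0,2}(\tilde Y)=0$. Thus $\chi(\cO_{\tilde Y})=1$ and $e(\tilde Y)=12-\tilde K^2$.

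Finally, by Lemma \ref{lem:boundK2-below} we have $\tilde K^2\geq -T_2$, so that $e(\tilde Y)=12-\tilde K^2\leq 12+T_2$, and we may take $T_3=12+T_2$.

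The argument is short precisely because the only genuinely analytic input—the universal lower bound on $\tilde K^2$—has already been carried out in Lemma \ref{lem:boundK2-below}. The single delicate point is the vanishing $h^{0,2}(\tilde Y)=0$, i.e.\ that $\tilde Y$ supports no holomorphic $2$-forms; this rests on the fact that its whole second cohomology is algebraic, inherited from the hypothesis that $H_2(Y,\QQ)$ is spanned by the complex curves $D_i$ together with the observation that resolving cyclic quotient singularities only adds (algebraic, rational) exceptional curves, leaving $b_1$ unchanged.
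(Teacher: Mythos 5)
Your proof is correct and follows exactly the paper's argument: both compute $\chi(\cO_{\tilde Y})=1$ from $q=h^{0,1}=0$ (since $b_1(\tilde Y)=0$) and $p_g=h^{0,2}=0$ (since $h^{1,1}(\tilde Y)=b_2(\tilde Y)$, the second cohomology being spanned by curves), then apply Noether's formula $\tilde K^2+e(\tilde Y)=12$ together with the lower bound $\tilde K^2\geq -T_2$ from Lemma \ref{lem:boundK2-below} to get $e(\tilde Y)\leq 12+T_2=T_3$. There is nothing to add.
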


\begin{proof}
As $h^{1,1}=b_2$, we have that the geometric genus is $p_g=h^{2,0}=0$. Also $b_1=0$ implies that the irregularity
is $q=0$. So the holomorphic Euler characteristic is $\chi(\cO_{\tilde Y})=1-q+p_g=1$. By Noether formula,
$\tilde K^2+e(\tilde Y)=12\chi(\cO_{\tilde Y})=12$, hence $e(\tilde Y)=12- \tilde K^2 \leq 12+\cota_2=\cota_3$.
\end{proof}

\begin{proposition}\label{prop:boundC2}
There is a universal $\cota_4$ so that if $C$ is a nice curve 
with $C^2=-m<0$ and genus $g=g(C)\geq 1$, then $m \leq 2g+\cota_4$.
\end{proposition}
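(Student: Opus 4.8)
The plan is to bound $(K+C)^2$ from above by a quantity linear in $g$ and then read off the bound on $m$. Writing $\chi:=-e_{\orb}(C)=K\cdot C+C^2$, the orbifold adjunction equality gives, exactly as in the log-canonical verification preceding Theorem \ref{prop:mas-mas},
$$
\chi=2g-2+\sum_{p\in C}\Bigl(1-\tfrac1{d_p}\Bigr),
$$
and since $C$ meets at most $\#P\le T_0$ singular points (Corollary \ref{cor:Pbounded}) we get $2g-2\le\chi<2g-2+T_0$. Because $C^2=-m$, adjunction yields $K\cdot C=\chi+m$, hence the identity
$$
(K+C)^2=K^2+2(K\cdot C)+C^2=K^2+2\chi+m,\qquad\text{i.e.}\qquad m=(K+C)^2-K^2-2\chi.
$$
Using $K^2\ge -T_2$ (Lemma \ref{lem:boundK2-below}) and $\chi\ge 2g-2$, this gives $m\le (K+C)^2+T_2-2(2g-2)$; so it suffices to prove an upper bound of the shape $(K+C)^2\le 6g+\mathrm{const}$.

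The tool I would use for this is the \emph{logarithmic Bogomolov--Miyaoka--Yau inequality}, precisely as it was invoked in the proof of Theorem \ref{prop:mas-mas}. First I would check that $K+C$ is effective: the exact sequence $0\to\cO(\tilde K)\to\cO(\tilde K+\tilde C)\to\cO_{\tilde C}(K_{\tilde C})\to 0$ together with $H^0(\tilde K)=H^1(\tilde K)=0$ (as $p_g=q=0$) shows $h^0(\tilde K+\tilde C)=g>0$ because $g\ge 1$, and pushing down gives $K+C$ effective, just as in Lemma \ref{lem:Sigma}. Next I would verify that $(Y,C)$ is log canonical by the same discrepancy computation as in \eqref{eqn:ai0}, which applies verbatim to a single nice curve through each singular point. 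Assuming $K+C$ is nef, \cite[Theorem 10.14]{Ast} then yields
$$
(K+C)^2=c_1\bigl(\Omega^1_Y(\log C)\bigr)^2\le 3\,c_2\bigl(\Omega^1_Y(\log C)\bigr)=3\,e_{\orb}(Y-C),
$$
the last equality being \cite[Theorem 10.8]{Ast}.

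It remains to bound $e_{\orb}(Y-C)$. Here I would discard the nonnegative contributions $\sum_{p\notin C}(1-1/d_p)$ and use $e_{\orb}(Y-C)\le e(Y\setminus C)=e(Y)-(2-2g)=e(Y)+2g-2$, where $C$ has underlying genus-$g$ surface so $e(C)=2-2g$. Since $e(Y)\le e(\tilde Y)\le T_3$ (the resolution only adds Euler characteristic), this gives $e_{\orb}(Y-C)\le T_3+2g-2$, hence $(K+C)^2\le 6g+3T_3-6$. Feeding this into the first paragraph produces $m\le 2g+(3T_3+T_2-2)$, so the statement holds with $T_4=3T_3+T_2-2$.

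The main obstacle I anticipate is exactly the nef-ness of $K+C$. For the spanning triple in Theorem \ref{prop:mas-mas} this required a delicate case analysis culminating in the local bound $\ell_p\ge 1$, and the argument there leaned heavily on having \emph{three} disjoint curves spanning $H_2(Y,\QQ)$; for a single curve $C$ one can a priori have an irreducible $A$ (for instance a $(-1)$-curve) disjoint from $C$ with $K\cdot A<0$, so that $(K+C)\cdot A<0$. I would handle this in one of two ways: either repeat the local-contribution computation of Theorem \ref{prop:mas-mas} to control $(K+C)\cdot A$ at the singular points where an obstructing $A$ meets $C$, or replace $K+C$ by the nef part of its Zariski decomposition and absorb the (negatively definite, and hence universally bounded in number since $b_2(\tilde Y)\le T_3-2$) negative part into the constant. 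Establishing this nef-ness, rather than the BMY estimate itself, is where the real work lies.
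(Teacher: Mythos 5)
Your overall strategy---effectivity of the log divisor plus the logarithmic Bogomolov--Miyaoka--Yau inequality of \cite[Theorem 10.14]{Ast}---is the same as the paper's, and your bookkeeping (orbifold adjunction, $K^2\geq -T_2$, $e(\tilde Y)\leq T_3$, the log-canonical check via (\ref{eqn:ai0})) is sound. But the step you defer, nef-ness of $K+C$, is precisely the mathematical content of the proposition, and neither of your two patches closes it. Patch (a) is vacuous in the only problematic case: as you yourself observe, the obstruction is an irreducible curve $A$ with $K\cdot A<0$, $A^2<0$, \emph{disjoint} from $C$, and then there are no intersection points at which a local contribution $\ell_p$ could be computed; one simply has $(K+C)\cdot A=K\cdot A<0$, and nef-ness genuinely fails. (In Theorem \ref{prop:mas-mas} this case was excluded only because the three curves span $H_2(Y,\QQ)$, a tool unavailable for a single curve.) Patch (b) invokes a Zariski-decomposition strengthening of log BMY which is not the cited \cite[Theorem 10.14]{Ast} (that theorem requires the log canonical divisor itself to be nef, and the nef part $P$ is not of the form $K_Y+\Delta$ for an lc pair), so as written it rests on a theorem you have not supplied; moreover ``absorbing the negative part into the constant'' by counting its components does not bound its contribution, since neither the coefficients nor the self-intersections occurring in the negative part are universally controlled. (If one does import a pseudo-effective version of log BMY from the literature, the argument could be rescued---indeed $(K+C)^2=P^2+N^2\leq P^2$, so nothing needs absorbing---but that is a strictly stronger tool than anything the paper uses.)

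The paper closes the gap with a small but essential idea your proposal misses: it works on the minimal resolution $\tilde Y$ with the pair $\tilde K+\tilde C$ and proves $(\tilde K+\tilde C)^2\leq 3e(\tilde Y)+6g-6$ by induction on blow-downs. The case analysis you outline shows that the only irreducible curves $A$ with $(\tilde K+\tilde C)\cdot A<0$ are $(-1)$-curves disjoint from $\tilde C$; instead of trying to rule them out, one contracts such an $A$. Under the blow-down $\tilde Y\to \bar Y$ the curve $\tilde C$ is untouched, $(\bar K+\tilde C)^2=(\tilde K+\tilde C)^2+1$, and $e(\bar Y)=e(\tilde Y)-1$, so the inequality on $\bar Y$ (known by induction; the base case is when nef-ness holds, which must occur since $b_2$ drops at each step) implies the inequality on $\tilde Y$ with room to spare. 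Noether's formula, smooth adjunction and $e(\tilde Y)\leq T_3$ then give $-\tilde C^2\leq 4e(\tilde Y)+2g-14\leq 2g+4T_3-14$, and Lemma \ref{lem:lem} transfers this to the orbifold via $-C^2\leq -\tilde C^2$. Note that working on $\tilde Y$ rather than on $Y$ is exactly what makes this induction available: on the orbifold there is no clean analogue of ``blow down the obstructing $(-1)$-curve,'' which is why your orbifold-level formulation concentrates all the difficulty in the one step you could not complete.
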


\begin{proof}
 We apply  \cite[Theorem 10.14]{Ast} to the smooth variety $\tilde Y$. First we check that $\tilde K+\tilde C$ is effective, 
 which follows as in Lemma \ref{lem:Sigma}. If we have that $\tilde K+\tilde C$ is nef, then 
  \cite[Theorem 10.14]{Ast}  says that 
   \begin{equation}\label{eqn:chi-chi}
(\tilde K+\tilde C)^2\leq 3 e(\tilde Y-\tilde C) = 3e(\tilde Y)+6g-6.
   \end{equation}
To check that $\tilde K+\tilde C$ is nef, let $A$ be an effective curve. 
If $A=\tilde C$ then $(\tilde K+\tilde C)\cdot \tilde C=2g-2\geq 0$. So suppose $A\neq \tilde C$. If $\tilde K\cdot A\geq0$
then $(\tilde K+\tilde C)\cdot A \geq 0$. Also if $A^2\geq 0$ then write for an effective $\Sigma \equiv \tilde K+\tilde C$,
$\Sigma=rA+T$, $r\geq 0$, $T$ not containing $A$, and thus $(\tilde K+\tilde C)\cdot A =rA^2+T\cdot A\geq 0$.

So we are left with $\tilde K\cdot A<0$ and $A^2<0$. Then $A$ is a $(-1)$-curve. If $A\cdot \tilde C\geq 1$ then we
are again done. So also $A\cap \tilde C=\emptyset$.
Blow-down $A$ and let $\tilde Y\to \bar Y$ be the blow-down map. We can assume inductively that
in $\bar Y$ we have
  $(\bar K+\tilde C)^2\leq 3e(\bar Y)+6g-6$. So 
  $(\tilde K+\tilde C)^2-1\leq 3e(\tilde Y)-3+6g-6$, and (\ref{eqn:chi-chi}) follows.
  
  Now from (\ref{eqn:chi-chi}), $\tilde K^2+2\tilde K\cdot \tilde C+\tilde C^2 \leq 3 e(\tilde Y)+6g-6$, which reads
  $12-e(\tilde Y)+4g-4 - \tilde C^2 \leq 3e(\tilde Y)+6g-6$. Therefore
   $$
    -\tilde C^2\leq 4e(\tilde Y)+2g-14 \leq 4\cota_3+2g-14=2g+\cota_4\, ,
  $$
with $\cota_4=4\cota_3-14$.
Finally, Lemma \ref{lem:lem} says that for $C=\pi(\tilde C)$, then $C^2\geq \tilde C^2 $, so $-C^2 \leq -\tilde C^2 \leq 2g+\cota_4$.
\end{proof}

\section{Proof of the non-Sasakian property}\label{sec:7}

Our final purpose is to complete the main result (Theorem \ref{thm:main}).

\begin{theorem}\label{thm:main-Sasakian}
There is some $N$ large enough such that 
the K-contact manifold $M$ from Theorem \ref{thm:K-contact} does not admit
a Sasakian structure.
\end{theorem}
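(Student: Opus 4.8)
The plan is to argue by contradiction. If $M$ admits a Sasakian structure then, as recorded at the end of Section \ref{sec:k-cont}, $M$ is a Seifert bundle over a K\"ahler cyclic orbifold $Y$ with $b_1(Y)=0$, $b_2(Y)=3$, and for every prime $p_{nm}$ the isotropy data forces a triple of disjoint complex curves $(D_1^{nm},D_2^{nm},D_3^{nm})$ of genera $n^2+1$, $m^2+1$ and $10$ spanning $H_2(Y,\QQ)$. First I would fix all the universal constants already produced: $\#P\le T_0$ (Corollary \ref{cor:Pbounded}), $-T_2\le K^2\le T_1$ (Lemmas \ref{lem:boundK2-below} and \ref{lem:boundK2-above}), the bound $e(\tilde Y)\le T_3$, and $m\le 2g+T_4$ for nice negative curves (Proposition \ref{prop:boundC2}). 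Setting $N_0=4T_0+1$, Proposition \ref{prop:some-a} supplies a \emph{not bad} value $a\le N_0$; for a parameter $N_1$ to be fixed at the very end I take $N=N(N_1)$ as in \eqref{eqn:boundN}, so that by Proposition \ref{prop:anotbad} the set $\cS_a$ has $N_1$ elements and the good bases $\varepsilon^{(n)}=(D_1^{(n)},D_2^{(n)},D_3^{(n)})$ of genus $\{g_n,g_a,10\}$, $n\in\cS_a$, are pairwise non proj-equivalent. Since the form has signature $(1,2)$, each basis has a \emph{unique} member of positive self-intersection, and by Lemma \ref{lem:1-a} these positive curves are pairwise non-proportional.

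The next step, which is clean, identifies which curve is positive: for $n$ large it must be the one of genus $g_n=n^2+1$. Indeed, if the positive curve were of genus $g_a$ or $10$, then the genus-$(n^2+1)$ curve would be a nice curve of negative self-intersection $-m$ with $m\le 2(n^2+1)+T_4$ by Proposition \ref{prop:boundC2}; the corresponding term in \eqref{eqn:K} satisfies $-\frac{(2n^2+m)^2}{m}\le -8n^2$, while the two remaining terms are universally bounded (as $a\le N_0$ is fixed and the other self-intersections are bounded), giving $K^2\le C-8n^2$ and contradicting $K^2\ge -T_2$ once $n$ is large. Hence $D_1^{(n)}$ has genus $n^2+1$, and combining \eqref{eqn:chi1} with the $K^2$ bounds yields $(D_1^{(n)})^2=2n^2-O(n)$ and $K\cdot D_1^{(n)}=O(n)$, whereas the negative curves $D_2^{(n)}$ (genus $g_a$) and $D_3^{(n)}$ (genus $10$) have \emph{universally} bounded genus, self-intersection, and $K\cdot D_i^{(n)}$.

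I would then convert non-proportionality of the positive curves into distinctness of the negative planes $V_n=\langle D_2^{(n)},D_3^{(n)}\rangle=(D_1^{(n)})^{\perp}\subset H_2(Y,\QQ)$: if $V_n=V_m$ then $D_1^{(n)},D_1^{(m)}$ both span the positive line $V_n^{\perp}$, hence are proportional, contradicting Lemma \ref{lem:1-a}. In particular the genus-$10$ curves $D_3^{(n)}$ cannot all be proportional; more precisely, by Lemma \ref{lem:1-b} no three of the (non proj-equivalent) bases can have proportional genus-$10$ members, so each proportionality class is shared by at most two values of $n$, and among $\{D_3^{(n)}:n\in\cS_a\}$ there are at least $N_1/2$ pairwise non-proportional classes. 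Each is the class of a nice irreducible curve of genus $10$ with self-intersection in the fixed range $[-(20+T_4),-1]$, lying in the lattice $H^2(Y,\ZZ)\cong H_2(Y-P,\ZZ)$ of rank $\le b_2(\tilde Y)\le T_3-2$.

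The main obstacle is the final finiteness: one must bound, by a universal $\mathcal N$, the number of pairwise non-proportional classes of nice irreducible curves of genus $10$ with self-intersection $\ge -(20+T_4)$ on $Y$. Granting this, choosing $N_1>2\mathcal N$ (and $N=N(N_1)$) gives $N_1/2\le\mathcal N$, the desired contradiction. The subtlety is that negative classes of bounded norm in the indefinite lattice $H^2(\tilde Y,\ZZ)$ can be near-isotropic and hence very numerous, so the bound cannot be purely lattice-theoretic; it must use that the classes are represented by irreducible curves of fixed positive genus. For such a curve $E$ one has $\tilde K\cdot\tilde E=2g-2-\tilde E^2>0$, so running the minimal model program on the rational surface $\tilde Y$ (at most $e(\tilde Y)\le T_3$ blow-downs) reaches a minimal model $W\in\{\PP^2,\mathbb{F}_e\}$ with $K_W^2>0$, on which the push-forwards of these curves have universally bounded self-intersection and intersection with $K_W$. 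Since $K_W$ now has positive square, the Hodge index theorem confines such classes to the intersection of a fixed quadric with a fixed affine hyperplane, of which there are finitely many, the count depending only on $K_W^2$, $b_2(W)$ and the number $\le T_3$ of blow-downs; tracking them back through the blow-ups bounds $\mathcal N$ universally. Carrying out this bookkeeping, paralleling the use of Hodge index and the universal bounds in Theorem \ref{prop:mas-mas} and Proposition \ref{prop:boundC2}, and combining it with the reduction above, completes the proof.
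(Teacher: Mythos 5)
Your reduction is fine as far as it goes: identifying the positive curve as the genus-$g_n$ one for large $n$, and extracting at least $N_1/2$ pairwise non-proportional genus-$10$ nice curves with self-intersection in $[-(20+T_4),-1]$ from Lemmas \ref{lem:1-a}, \ref{lem:1-b} and Propositions \ref{prop:some-a}, \ref{prop:anotbad}, \ref{prop:boundC2} is correct. But the entire weight of the theorem then rests on your final finiteness claim (the universal bound $\mathcal N$), and the argument you sketch for it has two genuine gaps. First, you assert that $\tilde Y$ is rational. All that is available is $q=p_g=0$ (from $b_1=0$ and $h^{1,1}=b_2$), and this does not imply rationality: Enriques surfaces, Dolgachev surfaces and many surfaces of general type (Godeaux, Campedelli, \dots) have $q=p_g=0$. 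Hence the minimal model $W$ need not lie in $\{\PP^2,\mathbb{F}_e\}$, and for Enriques or properly elliptic minimal models one has $K_W^2=0$, so the Hodge-index confinement you invoke is unavailable. Second, and more seriously, even when $W$ is $\PP^2$ or $\mathbb{F}_e$, the claim that the push-forwards of the genus-$10$ curves ``have universally bounded self-intersection and intersection with $K_W$'' is unjustified and false in general. Contracting a $(-1)$-curve $\mathcal{E}$ sends an irreducible curve $E$ to a curve of self-intersection $E^2+(E\cdot\mathcal{E})^2$ and canonical degree $\tilde K\cdot E-E\cdot\mathcal{E}$, and nothing in your data bounds the multiplicity $E\cdot\mathcal{E}$; bounding it is essentially the finiteness statement you are trying to prove. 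Concretely, for $W=\PP^2$ the numerical constraints inherited by a push-forward of degree $d$ with multiplicities $a_i$ are $\sum a_i^2=d^2+c$ and $\sum a_i=3d+18+c$ with $1\le c\le 20+T_4$ and boundedly many $a_i$; these admit solutions with $d$ arbitrarily large (nine multiplicities near $d/3$ plus small corrections), so the classes on $W$ are not confined to a bounded set, and excluding such curves geometrically is a delicate problem (of Nagata type), not bookkeeping.

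It is worth seeing why the paper does not meet this difficulty: it never tries to bound the negative genus-$10$ curves. Instead it works with the \emph{positive} curves and splits into two cases according to the sign of $K^2$. When $K^2>0$, the bound (\ref{eqn:bound}) confines the normalized positive classes $Q_n$ to the region $\{x\le T_7\}$ of the hyperboloid, which has finite hyperbolic area, and a packing argument (Proposition \ref{prop:20}) bounds the number of bases by a universal $T_8$. When $K^2\le 0$ this compactness fails, and the paper instead runs an arithmetic argument over well-spaced primes (Proposition \ref{prop:nandn}), forcing $m_1=n^2$ and hence $K^2=n^2-m_2-(18+m_3)^2/m_3>0$, a contradiction. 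The point is that positivity of the class (together with the slope bound) is exactly what yields compactness; negative classes of bounded square and bounded canonical degree are not confined to a finite-area region of the hyperbolic space, which is why your unified route through the genus-$10$ curves needs a substitute argument that the proposal does not supply.
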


If $M$ admits a Sasakian structure, then it also admits a quasi-regular Sasakian structure. Therefore
there is a Seifert bundle $\pi:M \to Y$, where $Y$ is a K\"ahler  cyclic orbifold. From the homology of $M$
given by Theorem \ref{thm:K-contact}, we have that $b_1(Y)=0$, $b_2(Y)=3$ and the ramification locus
is given by a collection of curves 
 $$
 \varepsilon^{nm}=(D_1^{nm}, D_2^{nm}, D_3^{nm}),
 $$
which satisfy that $D_1^{nm}, D_2^{nm}, D_3^{nm}$ are disjoint and span $H_2(Y,\QQ)$, for each $n,m$.
They can coincide or intersect for different values of $(n,m)$. The genera of $D_1^{nm}$, $D_2^{nm}$, $D_3^{nm}$
are $\{9n^2+1,9m^2+1,10\}$, respectively.

We start with the collection of bases $\varepsilon_n=(D_1^n,D_2^n,D_3^n)$ associated
to $m=2$, $n\in [3,N]$. The genera of the curves are $\{g_n=9n^2+1, 37, 10\}$ with
$g_n>37$.

Recall the bound $\#P \leq  \cota_0$ from Corollary \ref{cor:Pbounded}. Then there are at most 
$2\cota_0$ curves among $D_1^n,D_2^n, D_3^n$ passing through points of $P$. All the curves
$D_1^n$ are distinct, but there can be repetitions among $D_2^n,D_3^n$. By (\ref{eqn:chi1}), if
$D_1^n$ is the positive curve then we have
$m_1<\chi_1=2g_n-2=18n^2$.

\begin{proposition}\label{prop:nandn}
There are some (universal) $n_0>0$ and positive integer $R$ 
and $N>n_0$ so that there exist two prime numbers $n,n'\in [n_0+1, N]$ with
 $$
 R\left( \frac{n^4}{m_1} - \frac{n'^4}{m'_1} \right) \in \ZZ,
 $$ 
where $m_1=(D_1^n)^2$, $m_1'=(D_1^{n'})^2 \in \ZZ$, and $0<m_1<18n^2$, $0<m_1'<18n'^2$.

We can select $n,n'$ from a previously given infinite collection of  primes $\cP \subset \ZZ_{>0}$. 
Only $N$ depends on $\cP$, otherwise it is universal. 
 \end{proposition}
 
 \begin{proof}
 Divide the set $[3,N]\cap \cP$ into classes $\cA_1,\ldots, \cA_l$ according to proj-equivalence of
   the basis $\varepsilon_n$, that is $[\varepsilon_n]=[\varepsilon_m]$ if
   and only if $n,m\in \cA_i$ for some $i$.
  It may happen that $D_2^n =D_2^m$ if $n\in \cA_i$, $m\in \cA_j$, $i\neq j$,
  but it cannot be for three different classes, by Lemma \ref{lem:1-b}. 
  If this happen for $\cA_i,\cA_j$, we  retain $\cA_i$ and discard
  $\cA_j$ so that $\# \cA_i\geq \# \cA_j$. Let $\cA_{i_1},\ldots, \cA_{i_t}$ be the
retained classes, and note that $\#( \cup \cA_{i_k}) \geq \cota/2$, where $\cota=\# ([3,N]\cap \cP)$. Repeat the same 
process with the curves $D_3^n$. The remaining classes  
$\cA_{j_1},\ldots, \cA_{j_s}$ have cardinality $\# (\cup \cA_{j_k})\geq \cota /4$.
Then two bases in $\cup \cA_{j_k}$ are either proj-equivalent, or their curves are all distinct. 

If a class $\cA_{j_k}$ contains two primes $n,n'>n_0$ ($n_0$ will be chosen later),
then let $\varepsilon_n=(D_1,D_2,D_3)$,
$\varepsilon_{n'}=(D_1',D_2',D_3')$ be the proj-equivalent bases. As $D_2=D_2'$, $D_3=D_3'$,
then $D_1\cdot D_1'\geq 0$, so $D_1,D_1'$ are the positive curves. We compute
 \begin{align*}
K^2 &=\frac{(\chi_1-m_1)^2}{m_1}- \frac{(\chi_2+m_2)^2}{m_2} - \frac{(\chi_3+m_3)^2}{m_3}  \\
   &=\frac{(\chi_1'-m'_1)^2}{m'_1}- \frac{(\chi_2+m_2)^2}{m_2} - \frac{(\chi_3+m_3)^2}{m_3}  \, ,
\end{align*}
with the usual meaning for $\chi_j, m_j$ and $\chi_j',m_j'$. Then
 $$
 \frac{(\chi_1-m_1)^2}{m_1}- \frac{(\chi_1'-m'_1)^2}{m'_1}=0.
 $$
 
Now suppose that one $\cA_{j_k}$ contains $2\cota_0+2$ primes $n>n_0$. At most $2\cota_0$ of the curves
 $D_1^n$ are not good. So there are two primes $n,n'>n_0$ associated to good curves, and hence
 using that $2g_n-2=18n^2$, we have
 $$
 \frac{(18n^2-m_1)^2}{m_1}- \frac{(18n'^2-m'_1)^2}{m'_1}=0,
 $$
with $m_1,m_1'$ integers. This gives the result (actually with $R=18^2$).  
 
Now suppose that all classes $\cA_{j_k}$ contain at most $2\cota_0+1$ primes $n>n_0$. 
Take $N>0$ so that in $[n_0+1,N]\cap (\cup \cA_{j_k})$ there are more than $(2\cota_0+1)2\cota_0+2$ primes.
This can be arranged if 
 \begin{equation}\label{eqn:TT}
 \cota /4-(n_0-2) \geq (2\cota_0+1)2\cota_0+2.
 \end{equation}
 Choose $N$ large enough so that $\cota$ is large enough for (\ref{eqn:TT}) to hold.
Now remove all classes $\cA_{j_k}$ that contain a curve which is not good. At most
there are $2\cota_0$ of them. Therefore there must be two primes $n,n'$ still left after this.
In that case, the bases $(D_1,D_2,D_3),(D_1',D_2',D_3')$ are both good, and in different classes.

Let us see first that $D_1,D_1'$ are the positive curves. Suppose 
for instance that $D_2$ is the positive curve.
Then
$$
K^2= \frac{(72+m_2)^2}{m_2} - \frac{(2g_n-2+m_1)^2}{m_1} - \frac{(18+m_3)^2}{m_3}  \, .
 $$
The first and last term are bounded by (\ref{eqn:chi1}) and Proposition \ref{prop:boundC2}. So 
 $$
 K^2 \leq \cota_5-  \frac{(2g_n-2+m_1)^2}{m_1},
 $$
for some universal $\cota_5$. This implies the bound $K^2 \leq \cota_5-(8g_n-8)$. By
Lemma \ref{lem:boundK2-below}, $-\cota_2\leq \cota_5-8g_n+8$ and so $g_n=9n^2+1 \leq 1+\frac18(\cota_5+\cota_2)$. 
This means that there is $n_0=[\frac1{72}(\cota_5+\cota_2)]+1$ such that for $n\geq n_0$, $D_1=D_1^n$ is the positive curve.
This $n_0$ is universal.

Now take $n,n'\geq n_0+1$. Then $D_1,D_1'$ are positive curves, we have 
\begin{align*}
 K^2&= \frac{(2g_n-2-m_1)^2}{m_1} -\frac{(72+m_2)^2}{m_2}- \frac{(18+m_3)^2}{m_3} \\
  &= \frac{(2g_{n'}-2-m'_1)^2}{m'_1} -\frac{(72+m_2')^2}{m_2'}- \frac{(18+m'_3)^2}{m'_3} \, .
\end{align*}
Recalling that $2g_n-2=18n^2$, we  have
 $$
  \frac{18^2n^4}{m_1} - \frac{72^2}{m_2}- \frac{18^2}{m_3} 
  - \frac{18^2n'^4}{m'_1} + \frac{72^2}{m_2'}+\frac{18^2}{m'_3} \in \ZZ.
 $$
where $0<m_1<18n^2$, $0<m_1'<18n'^2$. By Proposition \ref{prop:boundC2}, $m_2\leq \cota_4+74$ and 
$m_3\leq \cota_4+20$. Then take $R=18^2\cdot \lcm (2,3,4,\ldots,  \cota_4+74)$, and we get the statement.

The number $N$ has to be chosen large enough so that $\cota$ satisfies the inequality
(\ref{eqn:TT}). It depends on $\cP$ clearly.
\end{proof}

Now take $n,n'>n_0$ prime numbers satisfying the condition in Proposition \ref{prop:nandn}.
Take $d=\gcd(m_1,m_1')$ and write $m_1=d a$, $m_1'=d a'$, with $\gcd(a,a')=1$. Then
 $\frac{n^4R}{a} - \frac{n'^4R}{a'}$ is an integer,
 from where $a|n^4R$ and $a'|n'^4R$. Given that $a< 18n^2$ and
 $a'< 18n'^2$, there is a finite set of possibilities for $a,a'$. 
 Let $D=\{d_1,\ldots, d_t\}$ be the divisors of $R$. Then $a\in \{d_i, d_in, d_in^2\}$, and
 $a'\in \{d_i, d_in', d_in'^2\}$. Therefore 
 \begin{equation}\label{eqn:quotient}
  \frac{m_1}{m_1'} =\frac{d_i}{d_j} \frac{n^\beta}{n'^\gamma}
 \end{equation}
with $\beta,\gamma=0,1,2$, $d_i,d_j\in D$.

Next $K^2$ is bounded by Lemma \ref{lem:boundK2-above}, hence 
 $$
 \frac{(18n^2-m_1)^2}{m_1} \leq \cota_6\, ,
 $$
for some universal $\cota_6$, using also Proposition \ref{prop:boundC2} to bound $m_2,m_3$. Then $m_1$ lies in the interval
 $$
 m_1 \in \left[ 18n^2+\frac{\cota_6}{2}-\sqrt{18n^2\cota_6 +\frac{\cota_6^2}{4}} , 18n^2\right).
 $$
In particular, 
 \begin{equation}\label{eqn:corr02}
18n^2-\sqrt{18\cota_6} n \leq m_1 <18n^2,
\end{equation}
and analogously for $m_1'$. Now 
 \begin{equation}\label{eqn:m1m1}
 \frac{m_1}{m'_1} \in \left( \frac{18n^2-\sqrt{18\cota_6} n}{18n'^2},
 \frac{18n^2}{18n'^2-\sqrt{18\cota_6} n'} \right).
 \end{equation}
 
Consider the set ${\mathcal R}=\{s=\frac{d_i}{d_j} \,| d_i\in D\}$. Let $\epsilon=\min (|1-s| \, |\, s \in {\mathcal R} , s\neq 1)>0$. This
is a universal number. 
Enlarging $n_0$, we have that
for primes $n,n'\geq n_0+1$, the quotient (\ref{eqn:m1m1}) 
is within $\epsilon$ of $\frac{n^2}{n'^2}$, i.e.\ in the interval
 \begin{equation}\label{eqn:n2n2}
 \left( (1-\epsilon) \frac{n^2}{n'^2},(1+\epsilon) \frac{n^2}{n'^2}\right).
 \end{equation}
 This $n_0$ is again universal (depends on $R$ and $\cota_6$).
 
We choose our collection of primes $\cP=\{n_1,n_2,\ldots \}$ 
in Proposition \ref{prop:nandn} in increasing order as follows. First choose 
$n_0\geq R(1+\epsilon)$, so that $n_i> n_0 \geq R(1+\epsilon)$.
Next take $n_{i+1}>(1-\epsilon)^{-1} R n_i^2$, for $i\geq 1$.

Now given $n=n_i,n'=n_j$, $i>j$, then all numbers (\ref{eqn:quotient}) are away from (\ref{eqn:n2n2}).
This is proved as follows: first all quotients $s=\frac{d_i}{d_j}\in [\frac1R,R]$. Next, 
$(1-\epsilon)\frac{n^2}{n'^2}\geq n R$, which is bigger than any of
the expressions $s, s n, s\frac1{n'}, s\frac{n}{n'}, s\frac1{n'^2}, s\frac{n}{n'^2}$. Also
$(1+\epsilon)\frac{n^2}{n'^2}\leq \frac1R \frac{n^2}{n'}$, which is smaller than any of the
expressions $s\frac{n^2}{n'}, s n^2$. 
Hence it must be 
 $$ 
 \frac{m_1}{m'_1}=\frac{n^2}{n'^2}\, ,
 $$
since $s=\frac{d_i}{d_j}\notin (1-\epsilon,1+\epsilon)$  unless $s=1$. 
Therefore $m_1=d_i n^2$, $m_1'=d_i n'^2$, for some $d_i\in D$. By (\ref{eqn:corr02}), this is impossible.

This contradiction shows that for such $N$ in Proposition \ref{prop:nandn}, Theorem \ref{thm:main-Sasakian} holds.

\begin{remark}
All the numbers $\cota_0,\cota_1,\ldots, \cota_6,n_0, R, N_0, N_1$ and $N$ that have appeared along the proof can be
determined. So $N$ in Theorem \ref{thm:main-Sasakian} can be found explicitly.
\end{remark}

\end{document}